\newcommand{\der}{\delta}
\newcommand{\cacha}{\Hat{\mathcal{C}}}
\newcommand{\delha}{\hat{\delta}}
\newcommand{\Laha}{\hat{\Lambda}}
\newcommand{\norm}[1]{\lVert #1\rVert}
\newcommand{\ka}{\kappa}
\newcommand{\iot}{\int_{0}^{t}}
\newcommand{\ist}{\int_{s}^{t}}
\newcommand{\Yti}{\widetilde{Y}}
\newcommand{\Wti}{\widetilde{W}}
\newcommand{\psiti}{\widetilde{\psi}}
\def\taun{\tau^{(n)}}
\def\ten{T^{(n)}}
\def\pn{^{(n)}}
\DeclareMathOperator{\id}{\text{Id}}
\newcommand{\eqcolon}{\mathrel{\mathord{=}\raise.2\p@\hbox{:}}}
\newcommand{\coloneq}{\mathrel{\raise.2\p@\hbox{:}\mathord{=}}}
\newcommand{\RR}{\mathbb{R}}
\newcommand{\abs}[1]{\lvert #1\rvert}
\newcommand{\R}{\mathbb R}
\newcommand{\N}{\mathbb N}
\newcommand{\cb}{\mathcal B}
\newcommand{\cac}{\mathcal C}
\newcommand{\cd}{\mathcal D}
\newcommand{\cf}{\mathcal F}
\newcommand{\cj}{\mathcal J}
\newcommand{\cl}{\mathcal L}
\newcommand{\cn}{\mathcal N}
\newcommand{\cq}{\mathcal Q}
\newcommand{\cs}{\mathcal S}
\newcommand{\cp}{\mathcal P}
\newcommand{\al}{\alpha}
\newcommand{\ep}{\varepsilon}
\newcommand{\ga}{\gamma}
\newcommand{\la}{\lambda}
\newcommand{\oom}{\Omega}
\newcommand{\si}{\sigma}
\newcommand{\vp}{\varphi}
\newcommand{\be}{\beta}
\newcommand{\bbe}{\bar\beta}
\newcommand{\btn}{\bar{\theta}^n}
\newcommand{\lp}{\left(}
\newcommand{\rp}{\right)}
\newcommand{\lc}{\left[}
\newcommand{\rc}{\right]}
\newcommand{\lcl}{\left\{}
\newcommand{\rcl}{\right\}}
\newcommand{\lln}{\left|}
\newcommand{\rrn}{\right|}
\newcommand{\lbp}{\Big(}
\newcommand{\rbp}{\Big)}
\newcommand{\lbc}{\Big[}
\newcommand{\rbc}{\Big]}
\newcommand{\rbcl}{\Big\}}
\newcommand{\lbcl}{\Big\{}
\newcommand{\bean}{\begin{eqnarray*}}
\newcommand{\eean}{\end{eqnarray*}}
\newcommand{\ben}{\begin{enumerate}}
\newcommand{\een}{\end{enumerate}}
\newcommand{\beq}{\begin{equation}}
\newcommand{\eeq}{\end{equation}}
\newcommand{\bepp}{\mathcal B_{\eta,2p}}
\newcommand{\absg}[1]{\big|#1\big|}
\newcommand{\normg}[1]{\big\|#1\big\|}
\newtheorem{theorem}{Theorem}[section]
\newtheorem{corollary}[theorem]{Corollary}
\newtheorem{notation}[theorem]{Notation}
\newtheorem{hypothesis}{Hypothesis}
\newtheorem{lemma}[theorem]{Lemma}
\newtheorem{proposition}[theorem]{Proposition}
\theoremstyle{remark}
\newtheorem{remark}[theorem]{Remark}
\begin{document}

\title[Stratonovich heat equation]{The Stratonovich heat equation: a continuity result and weak approximations}

\author{Aurélien Deya, Maria Jolis \and Lluís Quer-Sardanyons}

\address{
{\it Aurélien Deya:}
{\rm Institut \'Elie Cartan Nancy, B.P. 239,
54506 Vandoeuvre-l\`es-Nancy Cedex, France}.
{\it Email: }{\tt Aurelien.Deya@iecn.u-nancy.fr}
\newline
$\mbox{ }$\hspace{0.1cm}
{\it Maria Jolis \and Lluís Quer-Sardanyons:}
{\rm Departament de Matem\`atiques, Facultat de Ci\`encies, Edifici C, Universitat Aut\`onoma de Barcelona, 08193 Bellaterra, Spain}.
{\it Email: }{\tt quer@mat.uab.cat, mjolis@mat.uab.cat}
}

\keywords{convergence in law; stochastic heat equation; Stratonovich integral; convolutional rough paths theory}

\subjclass[2010]{60H10, 60H05, 60H07}

\date{\today}

%\thanks{}

\begin{abstract}
We consider a Stratonovich heat equation in $(0,1)$ with a nonlinear multiplicative noise driven by a trace-class Wiener process. First, the equation is shown to have a unique mild solution. Secondly, convolutional rough paths techniques are used to provide an almost sure 
continuity result for the solution with respect to the solution of the 'smooth' equation obtained by replacing the noise with an absolutely continuous process. This continuity result is then exploited to prove weak convergence results based on Donsker and Kac-Stroock type approximations of the noise.
\end{abstract}

\maketitle

\section{Introduction and main results}
\label{sec:intro}

The main motivation of the paper comes from \cite{Bardina-Jolis-Quer}, where the authors consider, for some fixed $T>0$, the stochastic heat equation
\beq
 \frac{\partial Y^n}{\partial t}(t,x)-\frac{\partial^2 Y^n}{\partial x^2}(t,x) = \dot \theta^n(t,x), \quad (t,x)\in [0,T]\times [0,1],
\label{eq:67}
\eeq
with some initial data and Dirichlet boundary conditions, where the random fields
$(\dot \theta^n)_{n\geq 1}$ verify that the family of processes $\theta^n(t,x):=\int_0^t\int_0^x \dot \theta^n(s,y)\, dyds$
converge {\it{in law}}, in the space $\mathcal{C}([0,T]\times [0,1])$ of continuous functions, to the Brownian sheet.
Then, sufficient conditions on $\theta^n$ are provided 
such that $Y^n$ converges {\it{in law}}, as $n\rightarrow \infty$, to the mild solution $Y$ of 
\[
 \frac{\partial Y}{\partial t}(t,x)-\frac{\partial^2 Y}{\partial x^2}(t,x) = \dot{W}(t,x), \quad (t,x)\in [0,T]\times [0,1],
\]
where $\dot{W}(t,x)$ stands for the space-time white noise. Applications of this result include the case of a Donsker type approximation, as well as a Kac-Stroock type approximation in the plane. 

\smallskip

Such {\it{diffusion approximation}} issues for stochastic PDEs have been extensively studied in the literature. 
Let us quote here Walsh \cite{Walsh-AAP}, Manthey \cite{Manthey1,Manthey2}, Tindel \cite{Tindel-elliptic}, Carmona and Fouque \cite{Carmona-Fouque}, 
Florit and Nualart \cite{Florit-Nualart}, just to mention but a few.

\smallskip

Now, following the line of \cite{Bardina-Jolis-Quer}, a natural question to be dealt with is to try to get the same type of weak convergence in a non-additive situation, that is when the term $\dot \theta^n(t,x)$ in (\ref{eq:67})
is replaced with $f(Y^n(t,x)) \dot \theta^n(t,x)$, for some sufficiently smooth function $f:\R \to \R$. In this case, one expects that the limit 
equation is of Stratonovich type, as it was the case in \cite{Carmona-Fouque} and \cite{Florit-Nualart} 
(see also \cite{Bally-Millet-Sanz,Tessitore-Zabczyk-JEE2006} for examples of a similar behaviour). This phenomenon has been recently illustrated by 
Bal in \cite{Bal-CMP2009} as well, for the weak approximation of a linear
parabolic equation in $\RR^d$ with random potential given by $Y^n(t,x) \dot \theta^n(x)$.

\smallskip

Going back to our setting, and focusing first on what we expect to be our limit equation, we should consider the Stratonovich heat equation 
\beq
 \frac{\partial Y}{\partial t}(t,x)-\frac{\partial^2 Y}{\partial x^2}(t,x) = f(Y(t,x)) \circ \dot{W}(t,x), \quad (t,x)\in [0,T]\times [0,1].
\label{eq:68}
\eeq
Unfortunately, a well-known drawback in this situation is that the solution admits only very low regularity (see \cite{Tindel-SSR1997}), a major obstacle for our treatment of the non-linearity of the problem. For this reason, we have chosen to restrict our attention to the case of a trace-class noise. To be more specific, we will assume that $\dot{W}$ is the formal derivative of a $L^2(0,1)$-valued Wiener process $\{W_t,\, t\in [0,T]\}$ with covariance operator $Q$ satisfying the following property:
\begin{hypothesis}\label{hypo-noise-3}
Let $(e_k)_{k\geq 1}$ be the basis of eigenfunctions for the Dirichlet Laplacian $\Delta$ in $L^2(0,1)$ 
given by $e_k(x):=\sqrt{2} \sin(k\pi x)$, $x\in [0,1]$. We assume that there exists a sequence of non-negative real numbers $(\la_k)_{k\geq 1}$ and a parameter $\eta >0$ such that $Q e_k=\la_k e_k$ for 
every $k\geq 1$ and $\sum_{k\geq 1} (\la_k  k^{4\eta}) < \infty$. Without loss of generality, we assume that $\eta \in (0,\frac18)$.
\end{hypothesis}
In particular, for any fixed $t\geq 0$, the process $W_t$  can be expanded in $L^2(\Omega;L^2(0,1))$ as
\begin{equation}\label{repr-sum}
W_t=\sum_{k\geq 1} \sqrt{\la_k} \be^k_t\,  e_k,
\end{equation}
where $(\be^k)_{k\geq 1}$ is a family of independent Brownian motions. Note that the condition $\sum_{k\geq 1} (\la_k \cdot k^{4\eta}) < \infty$ is only 
slightly stronger than the usual trace-class hypothesis $\sum_{k\geq 1}\la_k < \infty$, insofar as $\eta$ can be chosen as small as one wishes. 
For instance, it covers the case where $Q=(\mbox{Id} - \Delta)^{-r}$ with $r>\frac12$.
%$\la_k=k^{-r}$ for any $r>1/2$, since we can then pick $\eta$ such that $2r-4\eta >1$.

\smallskip

Another change with respect to \cite{Bardina-Jolis-Quer} lies in our formulation of the study:
compared to the random field approach in \cite{Bardina-Jolis-Quer}, here it has turned out to be more convenient to 
use the Hilbert-space-valued setting of Da Prato and Zabczyk \cite{DaPrato-Zabczyk}. In particular, 
we are interested in the mild form of equation (\ref{eq:68}),  which is given by
\begin{equation}\label{equa-mild}
Y_t=S_t\psi +\int_0^t S_{t-u}(f(Y_u) \circ dW_u ), \qquad t\in [0,T],
\end{equation}
where from now on, we will use the notation $Y_t(\cdot):=Y(t,\cdot)$,  
$\psi$ is some initial condition and $f:\R \to \R$ is a smooth enough mapping. As usual, $(S_t)_{t>0}$ denotes the strongly continuous 
semigroup of operators generated by $-\Delta$.

%A well-known drawback in this situation is that such an equation does not have a square integrable solution (see \cite{Tindel-SSR1997}).   
%In order to overcome this problem, we have chosen to work with a more regular noise in space. Thus, setting $\cb:=L^2((0,1))$, 
%we will assume that $\dot{W}$ is the formal derivative of a $\cb$-valued Wiener process $\{W_t,\, t\in [0,T]\}$ with covariance 
%operator $Q$ of nuclear type, that is $\mbox{Tr } Q <\infty$ (see Hypothesis \ref{hypo-noise-3} for further details). Another change with respect to \cite{Bardina-Jolis-Quer} lies in the formulation of our study:

%Here, the notation $\cdot$ stands for a pointwise multiplication of functions, i.e., $(\vp \cdot \psi)(\xi):=\vp(\xi) \psi(\xi)$.

\

A first part of the paper (Section \ref{sec:strato}) will be devoted to the interpretation of (\ref{equa-mild}) as a {\it{Stratonovich}} equation, 
and it will allow us to exhibit an existence and uniqueness result for the solution. 
We should mention here that the stochastic heat equation in the Stratonovich framework has already been studied in various settings,
most of them in the case of a linear multiplicative noise (see e.g. \cite{Buckdahn-Ma-I,Buckdahn-Ma-II,Hu-Nualart-PTRF2009}).
Once we have given a full sense to (\ref{equa-mild}), our strategy to study weak approximations of the solution could be stated in the following loose form:

\smallskip

\noindent {\it{(a)}} We will first establish an {\it{almost sure}} continuity result (in some suitable space-time topology) for the solution of (\ref{equa-mild}) with respect to the solution of the 'smooth' heat equation obtained by replacing $W$ with an absolutely continuous process $\widetilde{W}$ (see Theorem \ref{thm:continuity}).

\noindent {\it{(b)}} Then, for two particular families of absolutely continuous processes approximating $W$, we will rely on our continuity result to show convergence towards the solution in some possibly different probability space, leading us to the expected {\it{weak}} convergence (see Theorem \ref{theo:approx}).    

\

Our strategy to compare the solution $Y$ of (\ref{equa-mild}) with 'smooth' solutions is based on a genuine {\it{rough-paths}} type expansion of the equation, which follows the ideas of \cite{GT,RHE,RHE-glo}. Rough-paths techniques have indeed proved to be very efficient as far as approximation of non-linear systems in finite dimension is concerned (see \cite[Chapter 17]{FVbook}), and it is therefore natural to address the same question in this infinite-dimensional background. Note that the model given by (\ref{equa-mild}) differs from those studied in \cite{RHE,RHE-glo}, where only finite-dimensional noises are considered, forcing us to revise most of the technical details behind this procedure (see Section \ref{sec:prel}).

\

In order to state the above-mentioned results with more details, we need to introduce the spaces in which our random variables will take their values. First, as far as the spatial regularity is concerned, the fractional Sobolev spaces must come into the picture. Thus, for every $\al \in \R$ and $p\geq 2$, we will denote by $\cb_{\al,p}$ the fractional Sobolev space of order $\al$ based on $L^p(0,1)$, that is
\[
\vp \in \cb_{\al,p} \Longleftrightarrow (-\Delta)^\al \vp \in L^p(0,1),
\]
where $\Delta$ stands for the Dirichlet Laplacian in $L^2(0,1)$ (see e.g. \cite{run-sick} for a thorough study of these spaces). 
For the sake of conciseness, we will write $\cb_\al$ for $\cb_{\al,2}$ and $\cb$ for $\cb_0=L^2(0,1)$ throughout the paper.
We will also denote by $\cb_\infty$ the set of continuous functions on $[0,1]$, endowed with the supremum norm.

Of course, we will also have to deal with the time regularity of our processes.
So, for any subinterval $I\subset [0,T]$ and any Banach space $V$, we define $\cac^0(I;V)$ as the space of continuous functions
$y:I\rightarrow V$ and set 
\[
 \cn[y;\cac^0(I;V)]:=\sup_{t\in I} \norm{y_t}_V .
\]
Moreover, for any $\la>0$, we introduce the space $\cac^\la(I;V)$ of $\la$-H\"older continuous $V$-valued functions endowed with 
the seminorm  
\begin{equation}\label{der-norm}
\cn[y;\cac^\la(I;V)] :=\sup_{s<t \in I} \frac{\norm{y_t-y_s}_V}{\lln t-s \rrn^\la}.
\end{equation}
Note that in the case where $I=[0,T]$, we will often write $\mathcal{C}^\la(V)$ for $\cac^\la([0,T];V)$.

\

Now, consider any process $\widetilde{W}$ defined on the same probability space as $W$ and with absolutely continuous paths in $\cb_{\eta,2p}$, for every integer $p\geq 1$ (recall that $\eta$ has been defined in Hypothesis \ref{hypo-noise-3}). 
Then, let $\{\widetilde{Y}_t,\, t\in [0,T]\}$ be the unique solution of the Riemann-Lebesgue equation (considered in a pathwise sense):
\begin{equation}\label{regu-equa}
\widetilde{Y}_t=S_t \widetilde{\psi}+\int_0^t S_{t-u}(f(\widetilde{Y}_u) \cdot d\widetilde{W}_u), \qquad t\in [0,T],
\end{equation}
where $\tilde{\psi}\in \cb$. As evoked earlier, our first main result will consist in comparing such a solution $\widetilde{Y}$ with the solution $Y$ of (\ref{equa-mild}). This result can be stated as follows.

\begin{theorem}\label{thm:continuity}
Assume that Hypothesis \ref{hypo-noise-3} holds true for $W$ and some parameter $\eta >0$, and let $f:\R \to \R$ be a 
function of class $\cac^3$, bounded with bounded derivatives. In addition, pick $\ga \in (\frac12,\frac12+\eta)$ and assume that the initial 
condition $\psi$ (resp. $\widetilde{\psi}$) in (\ref{equa-mild}) (resp. in (\ref{regu-equa})) belongs to $\cb_\ga$. 
Then there exist $\ep >0$ and $p\geq 1$ such that
\begin{multline}\label{cont-ito}
\cn[Y-\widetilde{Y};\cac^0(\cb_\ga)]
\leq F_{\ep,p}\lp \norm{\psi}_{\cb_\ga},\norm{\widetilde{\psi}}_{\cb_\ga},\cn[W;\cac^{\frac{1}{2}-\ep}(\cb_{\eta,2p})],
\cn[\widetilde{W};\cac^{\frac{1}{2}-\ep}(\cb_{\eta,2p})] \rp\\
 \lcl \norm{\psi-\widetilde{\psi}}_{\cb_\ga} +\cn[W-\widetilde{W};\cac^{\frac{1}{2}-\ep}(\cb_{\eta,2p})]\rcl,
\end{multline}
for some deterministic function $F_{\ep,p}:(\R^+)^4 \to \R^+$ bounded on bounded sets.
\end{theorem}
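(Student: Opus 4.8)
The plan is to recast both equations~\eqref{equa-mild} and~\eqref{regu-equa} in the language of convolutional rough paths and to run a fixed-point/comparison argument on the difference of the two solutions, tracking the dependence on the data in a locally bounded way. First I would introduce, for a generic driver $Z$ (playing the role of $W$ or $\wt W$, lifted together with its second-order iterated convolutional integral $\mathbf{Z}^{\mathbf 2}$), the solution map that sends the rough driver and the initial condition to the mild solution in the space $\cac^\la(\cb_\ga)$ for an appropriate $\la$ slightly below $\frac12$; this is exactly the machinery developed in Section~\ref{sec:prel}, following~\cite{RHE,RHE-glo}, and since $\wt W$ is absolutely continuous its canonical rough lift coincides with the Riemann--Lebesgue one, so~\eqref{regu-equa} is a special case of the rough equation. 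The first substantive step is therefore to record a priori bounds: there exist $\ep>0$, $p\ge 1$ and a function bounded on bounded sets such that $\cn[Y;\cac^0(\cb_\ga)]$ is controlled by $\norm\psi_{\cb_\ga}$ and $\cn[W;\cac^{1/2-\ep}(\cb_{\eta,2p})]$ (together with the induced bound on the area term, which under Hypothesis~\ref{hypo-noise-3} is finite almost surely), and the same for $\wt Y$ in terms of $\wt\psi,\wt W$.

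The heart of the argument is the stability estimate. I would write $Y-\wt Y$ using the mild formulation, split the difference of the stochastic convolutions into a term where the integrand difference $f(Y_u)-f(\wt Y_u)$ is driven by the same rough path and a term where a fixed (regular) integrand is driven by the difference $W-\wt W$ (with the corresponding difference of area terms $\mathbf{W}^{\mathbf 2}-\mathbf{\wt W}^{\mathbf 2}$, which is itself controlled by $\cn[W-\wt W;\cac^{1/2-\ep}(\cb_{\eta,2p})]$ times the sum of the two driver norms, via a telescoping $a^2-b^2=(a-b)(a+b)$ type identity at the level of iterated integrals). Using the Lipschitz property of $f,f',f''$ (here $f\in\cac^3$, bounded with bounded derivatives, is used) and the sewing/convolutional-sewing bounds from Section~\ref{sec:prel}, this yields an estimate of the form
\begin{equation*}
\cn[Y-\wt Y;\cac^\la(I;\cb_\ga)] \le C(\mathrm{data})\,\bigl\{\norm{\psi-\wt\psi}_{\cb_\ga}+\cn[W-\wt W;\cac^{1/2-\ep}(\cb_{\eta,2p})]\bigr\} + C(\mathrm{data})\,\abs{I}^\kappa\,\cn[Y-\wt Y;\cac^\la(I;\cb_\ga)]
\end{equation*}
on any subinterval $I$, for some $\kappa>0$, where $C(\mathrm{data})$ is bounded on bounded sets in the four driver/area norms. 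Choosing $\abs I$ small enough (depending only on those norms) absorbs the last term, and a standard patching over a finite number of such subintervals — with the output at the end of one interval feeding in as the initial condition of the next, which is why the a priori bounds above are needed — propagates the estimate to all of $[0,T]$ and produces the claimed $F_{\ep,p}$.

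The main obstacle, and where most of the work in Section~\ref{sec:prel} is invested, is that the driver $W$ is genuinely infinite-dimensional (a trace-class $L^2(0,1)$-valued process), so the second-order object $\mathbf W^{\mathbf 2}$ and all the convolutional increments must be estimated in the scale $\cb_{\eta,2p}$ rather than in a finite-dimensional norm; the interplay between the smoothing of the heat semigroup $S_t$ (which gains regularity like $\abs t^{-\beta}$ in the Sobolev scale) and the loss coming from the roughness $\frac12-\ep$ of the driver is delicate, and is precisely what forces the constraint $\ga\in(\frac12,\frac12+\eta)$ together with the slightly-stronger-than-trace-class condition $\sum_k\la_k k^{4\eta}<\infty$. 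Verifying that the convolutional-sewing constants and the area bounds are indeed controlled by $\cn[W;\cac^{1/2-\ep}(\cb_{\eta,2p})]$ — and, crucially, that the dependence on this quantity is polynomial, hence bounded on bounded sets — is the technical core; once that is in place, the comparison argument above is essentially a Gronwall-type iteration.
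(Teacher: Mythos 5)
Your proposal is correct and follows essentially the same route as the paper: a convolutional rough-paths decomposition of both solutions, the key observation that the second-order (area) object reduces to an explicit continuous functional of the increments of the driver (so that $L^{WW}-L^{\widetilde W\widetilde W}$ is Lipschitz in $\cn[W-\widetilde W;\cac^{\frac12-\ep}(\cb_{\eta,2p})]$, exactly the paper's Lemma \ref{lem:ipp} and the bounds (\ref{contin-bound-1})--(\ref{contin-bound-2})), followed by a local stability estimate with absorption on small intervals and a patching argument fed by the a priori bounds of Corollary \ref{cor:contr-sol}. The only cosmetic difference is your choice of working in a H\"older space $\cac^\la(\cb_\ga)$ rather than the paper's mixed topology $\cq(I)$, which does not change the substance of the argument.
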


The topologies involved in this statement are directly inherited from our rough-paths analysis of the equation, and their relevance should therefore become clear through the lines of Section \ref{sec:prel} (see in particular the proof of the central Proposition \ref{prop:cont-op}). Note that this bound certainly remains valid with respect to some Hölder norm (in time) for the left-hand side of (\ref{cont-ito}), as our arguments will suggest it. However, due to the technicality of the rough-paths procedure, we have preferred to focus on the behaviour of the supremum norm (see also Remark \ref{rk:optim}).

\medskip

Our next step will consist in applying the above Theorem \ref{thm:continuity} - on some possibly larger probability space - to two particular families of 
absolutely continuous processes that approximate $W$, so as to retrieve weak convergence results for the solution. To define these approximation processes, 
we will make use of the following additional notation. Namely, on a probability space $(\Omega,\cf,P)$, given a sequence $(X^k)_{k\geq 1}$ of centered i.i.d processes 
admitting moments of any order, we set, for every $t\geq 0$,
\[
\mathbf{W}(X^\cdot)_t:=\sum_{k\geq 1} \sqrt{\la_k} X^k_t e_k.
\]
Thanks to our forthcoming Proposition \ref{prop:conver}, we know that $\mathbf{W}(X^\cdot)$ is indeed well-defined as a process on $(\Omega,\cf,P)$ 
with values in $\cb_{\eta,2p}$, for all $p\geq 1$. Let us also specify that, given a sequence $(\be^n)_{n\geq 1}$ of real-valued processes,
we will henceforth denote by $(\be^{n,\cdot})_{n\geq1}=(\be^{n,k})_{n,k\geq 1}$ a generic sequence of independent copies of $(\be^n)_{n\geq 1}$ 
(defined on a possibly larger probability space).

The two families of approximations at the core of our study can now be introduced as follows (we fix $T=1$ for the sake of clarity): 

\medskip

\noindent {\it{(i)}} The \emph{Donsker approximation} $W^n: =\mathbf{W}(S^{n,\cdot})$, where $S^n$ is a sequence of appropriately rescaled random walks. 
To be more specific, let $(Z_j)_{j\geq 1}$ be a family of i.i.d random variables with
mean zero, unit variance and admitting moments of any order. Then, for each $n\in\N$, set
\begin{equation}\label{dons-intro}
S_t^{n} := n^{-1/2}\lbcl \sum_{j=1}^{i-1}  Z_j+\frac{t-(i-1)/n}{1/n}\,
Z_i\rbcl\quad\text{ if }\,
t\in\lbc\frac{i-1}{n},\frac{i}{n}\rbc,\quad \text{with
}\,i\in\{1,\ldots,n\}.
\end{equation}
Recall that, by Donsker Invariance Principle (see e.g. \cite[Thm. 4.20]{Karatzas-Shreve}), $S^{n}$ is known to
converge in law to the standard Brownian motion in $\mathcal \cac^0([0,1];\R)$, as $n\to\infty$.

\medskip

\noindent {\it{(ii)}} The \emph{Kac-Stroock approximation} $W^n:=\mathbf{W}(\theta^{n,\cdot})$, where $\theta^n$ stands for the classical 
Kac-Stroock approximation of the (one-dimensional) Brownian motion. Precisely, introduce a standard Poisson process $N$ and
a Bernoulli variable $\zeta$ independent of $N$, with $P(\zeta=1)=1/2$. Then, set
\begin{equation}\label{ks-intro}
\theta_t^{n}:=\sqrt{n}\iot (-1)^{\zeta+N(ns)}ds.
\end{equation}
Here again, the sequence $(\theta^n)_{n\geq 1}$ thus defined
converges in law in $\mathcal \cac^0([0,1];\R)$, as $n\to\infty$, to a
standard Brownian motion (see e.g \cite{Kac-1956,Pinsky-PTRF1968}).

\smallskip

%We remark that, in continuity with the results of \cite{Bardina-Jolis-Quer}, we have not considered the one-dimensional counterpart 
%of the Kac-Stroock processes in the plane considered therein (see \cite{Stroock}), but the slight modification with the Bernoulli variables $\zeta$. 
%This has been done in order to guarantee that the kernels $\theta^n$ are mean zero.

Of course, the one-dimensional weak convergence of $S^n$ (resp. $\theta^n$) towards the Brownian motion is a priori not sufficient for us to 
apply Theorem \ref{thm:continuity}. Our aim is to turn this one-dimensional weak convergence into an almost sure convergence result for 
$\mathbf{W}(S^{n,\cdot})$ (resp. $\mathbf{W}(\theta^{n,\cdot})$) with respect to the topology involved in (\ref{cont-ito}), and this will appeal in particular to Skorokhod embedding arguments (see Section \ref{sec:approx-law}). Together with Theorem \ref{thm:continuity}, the strategy ends up with the following statement.

\begin{theorem}\label{theo:approx}
Under the hypotheses of Theorem \ref{thm:continuity}, fix an initial condition $\psi=\tilde{\psi} \in \cb_\ga$, and denote by $Y^n$ the (Riemann-Lebesgue) 
solution of (\ref{regu-equa}) associated with either the Donsker approximation $W^n=\mathbf{W}(S^{n,\cdot})$ or the Kac-Stroock 
approximation $W^n=\mathbf{W}(\theta^{n,\cdot})$. 
Then, as $n\to \infty$, $Y^n$ converges in law to $Y$ in the space $\cac^0(\cb_\ga)$.
\end{theorem}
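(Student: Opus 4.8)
The plan is to deduce the weak convergence of $Y^n$ from the almost sure continuity estimate of Theorem \ref{thm:continuity} via a Skorokhod-embedding argument. The key observation is that $Y^n$ is a deterministic (measurable) function of the driving path $W^n=\mathbf{W}(X^{n,\cdot})$, the dependence being continuous with respect to the topology of $\cac^{\frac12-\ep}(\cb_{\eta,2p})$ by (\ref{cont-ito}). Hence, it suffices to show that $W^n$ converges in law to $W$ in $\cac^{\frac12-\ep}(\cb_{\eta,2p})$, for the $\ep$ and $p$ furnished by Theorem \ref{thm:continuity}, and then to combine this with a suitable coupling so that the continuity estimate can be applied pathwise.

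First I would establish the functional weak convergence $W^n \to W$ in $\cac^{\frac12-\ep}(\cb_{\eta,2p})$. By the forthcoming Proposition \ref{prop:conver} the series $\mathbf{W}(X^{n,\cdot})_t=\sum_k \sqrt{\la_k}X^{n,k}_t e_k$ converges in $\cb_{\eta,2p}$ and defines a genuine process there; the summability $\sum_k \la_k k^{4\eta}<\infty$ of Hypothesis \ref{hypo-noise-3} is what controls the tail of this series uniformly in $n$. One proves tightness of $(W^n)_n$ in the Hölder space $\cac^{\frac12-\ep}(\cb_{\eta,2p})$ by a Kolmogorov-type criterion: using the independence of the copies $(X^{n,k})_k$ together with the uniform moment bounds on the rescaled random walk $S^n$ (resp.\ the Kac-Stroock process $\theta^n$), one shows $\expect\, \norm{W^n_t-W^n_s}_{\cb_{\eta,2p}}^{2p} \lesssim |t-s|^{p}$ with a constant independent of $n$, which yields tightness in $\cac^{\frac12-\ep'}(\cb_{\eta,2p})$ for any $\ep'<\ep$; one then works with a slightly better Hölder exponent at the level of tightness and transfers down to exponent $\frac12-\ep$. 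Identification of the limit is immediate from the one-dimensional invariance principles (Donsker, resp.\ Kac-Stroock) applied coordinatewise: the finite-dimensional distributions of $(X^{n,k})_{k\le K}$ converge to those of independent Brownian motions $(\be^k)_{k\le K}$, whence the finite-dimensional projections of $W^n$ converge to those of $W$, and tightness upgrades this to weak convergence in the Hölder topology.

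The second step is the Skorokhod passage. Having weak convergence of $W^n$ to $W$ in the Polish space $E:=\cac^{\frac12-\ep}(\cb_{\eta,2p})$, by Skorokhod's representation theorem there exist, on a common probability space $(\oom',\cf',P')$, random variables $\widehat{W}^n$ and $\widehat{W}$ with the same laws as $W^n$ and $W$ respectively, such that $\widehat{W}^n \to \widehat{W}$ almost surely in $E$. Now $\widehat{Y}^n := \Phi(\widehat{W}^n)$ (the Riemann-Lebesgue solution map of (\ref{regu-equa})) has the same law as $Y^n$, and $\widehat{Y}:=\Phi(\widehat{W})$ has the same law as $Y$ — here one must check that the mild Stratonovich solution $Y$ of (\ref{equa-mild}) is indeed obtained by applying the same measurable map $\Phi$ to $W$, which is precisely the content of the rough-paths construction of Section \ref{sec:strato}, $\Phi$ being continuous on the relevant topology. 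Applying (\ref{cont-ito}) with $\psi=\widetilde\psi$ (so the $\norm{\psi-\widetilde\psi}_{\cb_\ga}$ term drops) gives, $P'$-almost surely,
\[
\cn[\widehat{Y}^n-\widehat{Y};\cac^0(\cb_\ga)] \leq F_{\ep,p}\lp \norm{\psi}_{\cb_\ga},\norm{\psi}_{\cb_\ga}, \cn[\widehat{W};E], \cn[\widehat{W}^n;E] \rp \, \cn[\widehat{W}^n-\widehat{W};E].
\]
Since $\cn[\widehat{W}^n-\widehat{W};E]\to 0$ a.s.\ and $\cn[\widehat{W}^n;E]$ stays bounded along the convergent sequence while $F_{\ep,p}$ is bounded on bounded sets, the right-hand side tends to $0$ almost surely, so $\widehat{Y}^n\to\widehat{Y}$ a.s.\ in $\cac^0(\cb_\ga)$, hence in law; as $\widehat{Y}^n \overset{d}{=} Y^n$ and $\widehat{Y}\overset{d}{=}Y$, this is the claimed convergence.

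The main obstacle I expect is the \emph{uniform Hölder tightness of $W^n$ in the infinite-dimensional Sobolev space $\cb_{\eta,2p}$}: one needs the Kolmogorov moment bound with a constant that is simultaneously uniform in $n$ and compatible with the summability of $(\la_k k^{4\eta})$, and this requires care in interchanging the series over $k$ with the $2p$-th moment — the independence of the copies $(X^{n,k})_k$ is essential here, as is a precise estimate on $\expect|X^{n,k}_t-X^{n,k}_s|^{2p}$ that does not degenerate at the grid points of the random walk (for the Donsker case) or near jump times of the Poisson process (for the Kac-Stroock case). A secondary technical point is verifying that the solution map $\Phi$ is the \emph{same} for the rough (Stratonovich) equation and the smooth (Riemann-Lebesgue) equation, i.e.\ that the rough-paths construction genuinely extends the classical one continuously; but this should already be available from Section \ref{sec:prel} and the proof of Proposition \ref{prop:cont-op}.
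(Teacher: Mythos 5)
Your final step (an a.s.\ coupling of the noises plus a pathwise application of Theorem \ref{thm:continuity}, then transfer back to convergence in law) is the same as the paper's, and your caveat about identifying the Stratonovich solution as a measurable functional of $W$ is well taken. But the way you produce the coupling is genuinely different from the paper's, and it is where the gaps lie. The paper never proves weak convergence of $W^n$ to $W$ in the H\"older topology and never invokes the Skorokhod \emph{representation} theorem. It works at the level of the scalar coordinates: a Skorokhod \emph{embedding} (stopping times of a single Brownian motion; for Kac--Stroock combined with the Griego--Heath--Ruiz-Moncayo identification) produces, on one probability space, copies $\bbe^n$ of $S^n$ (resp.\ $\theta^n$) and a Brownian motion $\bbe$ together with the quantitative bound $\sup_t E[|\bbe^n_t-\bbe_t|^{2p}]\le C_p n^{-\nu p}$ (Propositions \ref{mateixespai-Donsker} and \ref{mateixespai-Kac}). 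Interpolating this with the uniform increment bound $E[|\be^n_t-\be^n_s|^{2p}]\le C_p|t-s|^p$ (Lemma \ref{lem:estim}), lifting to $\cb_{\eta,2p}$ via Proposition \ref{prop:conver}, and applying Garsia--Rodemich--Rumsey plus Borel--Cantelli gives $\cn[\mathbf{W}(\bbe^{n,\cdot})-\mathbf{W}(\bbe^{\cdot});\cac^{\frac12-\ep}(\cb_{\eta,2p})]\to 0$ a.s.\ (Proposition \ref{prop:crit}), with no weak-convergence machinery in function space at all.

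The concrete gaps in your route are the following. First, tightness of $(W^n)_n$ in $\cac^{\frac12-\ep}([0,1];\cb_{\eta,2p})$ does \emph{not} follow from the Kolmogorov-type bound $E\norm{W^n_t-W^n_s}_{\cb_{\eta,2p}}^{2pq}\lesssim|t-s|^{pq}$: since $\cb_{\eta,2p}$ is infinite-dimensional, bounded sets of a better H\"older space are not relatively compact in a worse one (Arzel\`a--Ascoli also requires, for each $t$, relative compactness of $\{W^n_t\}_n$ in the target space). You would in addition have to control the spectral tails $\sum_{k>N}\la_k k^{4\eta'}$ uniformly in $n$ for some $\eta'$ strictly above the working exponent and exploit a compact embedding $\cb_{\eta',2p}\hookrightarrow\cb_{\eta,2p}$; since Hypothesis \ref{hypo-noise-3} gives summability only at level $\eta$, this forces you to first lower the working exponent to some $\eta''<\eta$ while preserving $\ga<\frac12+\eta''$ --- feasible, but a genuine piece of work your sketch omits. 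Second, $\cac^{\frac12-\ep}([0,1];V)$ is not separable, so neither Prokhorov's theorem (in the direction you need) nor Skorokhod's representation theorem applies off the shelf; one must pass to the separable little-H\"older subspace, using that the approximations and the limit in fact lie in $\cac^{\frac12-\ep'}$ for $\ep'<\ep$. Both obstructions are precisely what the paper's quantitative coupling (conditions (i)--(ii) of Proposition \ref{prop:crit}) is designed to bypass.
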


\

The paper is organized as follows. Section \ref{sec:strato} is devoted to a few preliminaries on the theoretical study of the Stratonovich heat equation (\ref{eq:68}). 
The rough-paths type analysis of this equation is performed in Section \ref{sec:prel}, and it will lead us to the proof of our continuity result Theorem \ref{thm:continuity}. 
In Section \ref{sec:approx-law}, we will tackle the approximation issue for the above-defined Donsker and Kac-Stroock processes by exhibiting a general convergence criterion (see Proposition \ref{prop:crit}), which will entail Theorem \ref{theo:approx}. Eventually, we have added an appendix with material on fractional Sobolev spaces and the proof of a 
technical result needed in Section \ref{sec:proofthm}.  

\

\begin{remark}\label{rk:commut}
At first sight, the reader familiar with rough-paths type continuity results may be surprised at the absence of some `Lévy-area' term in our bound (\ref{cont-ito}). Otherwise stated, the convergence of an approximation $W^n$ towards $W$ (with respect to some appropriate topology) is sufficient to guarantee the convergence of the associated solution. In fact, on this particular point, the situation is very similar to the case of a one-dimensional SDE with so-called \emph{commuting} vector fields, i.e.,
\begin{equation}\label{sys-com}
dY_t=b(Y_t)\, dt+\sum\nolimits_{i=1}^n \si_i(Y_t) \circ dB^i_t \quad \text{with} \quad \si_i'(x)\si_j(x)=\si_j'(x)\si_i(x) \quad \text{for all} \ i,j=1,\ldots,n.
\end{equation}
It is a well-known fact (see for instance \cite{sussm}) that under this commuting assumption, the solution $Y$ of (\ref{sys-com}) appears as a continuous functional of the sole noise $B$ (that is, no need for any Lévy-area component). In a certain way, Equation (\ref{eq:68}) fits the above pattern. Indeed, for fixed $x\in (0,1)$, the noisy perturbation can be written as $\sum_{i=0}^\infty [\sqrt{\la_i}\,e_i(x)f(Y_t(x))]  \circ d\be^i_t$, i.e., we (morally) deal with $n=\infty$ and $\si_i(\cdot)=\sqrt{\la_i} e_i(x) f(\cdot)$ in (\ref{sys-com}). So, at least at this heuristic level, our continuity result (\ref{cont-ito}) becomes quite natural. In a more specific way, we will see that due to the commuting property, the Lévy-area term arising from the rough-paths analysis of (\ref{eq:68}) can be easily reduced to some continuous functional of $W$ (Lemma \ref{lem:ipp}).
\end{remark}

\begin{remark}\label{rmk:dimension}
As we shall see it in Section \ref{sec:prel}, our proof of Theorem \ref{thm:continuity} heavily relies on the properties of the fractional Sobolev spaces $\cb_{\al,p}$, which we have recalled in the appendix. Unfortunately, many of these properties become much more restrictive as soon as the underlying space dimension is larger than $2$, as illustrated by the classical Sobolev embeddings. This accounts for our choice to stick to a one-dimension heat equation. Note however that our considerations on the theoretical study of (\ref{equa-mild}) (Section \ref{sec:strato}) could be easily extended to a multidimensional setting.
\end{remark}

\begin{remark}
The results in this paper remain actually valid for any operator $A$ of the form $A=-\partial_x (a \cdot \partial_x)+c$, where $c\geq 0$ and $a:[0,1]\to \R$ is a continuously differentiable function. Indeed, as explained in \cite[Section 2.1]{RHE-glo}, such an operator $A$ also generates an analytic semigroup of contractions and one can identify the domains $\cd(A_p^\al)$ of its fractional powers with the spaces $\cb_{\al,p}$, which is sufficient to follow the lines of our reasoning.
\end{remark}

Unless otherwise stated, any constant $c$ or $C$ appearing in our computations below
is understood as a generic constant which might change from line to line without further mention.

%%%%%%%%%%%%%%%%%%%%%%%%%%%%%%%%%%%%%%%%%%%%%%%%%%%%%%%%%%%%%%%%%%%%%%%%%%%%%%%%%%%%%%%%%%%%%%%%%%%%%%%%%%%%%%%%%%%%%%%%%%
%%%%%%%%%%%%%%%%%%%%%%%%%%%%%%%%%%%%%%%%%%%%%%%%%%%%%%%%%%%%%%%%%%%%%%%%%%%%%%%%%%%%%%%%%%%%%%%%%%%%%%%%%%%%%%%%%%%%%%%%%%

\section{The Stratonovich integral}
\label{sec:strato}

Recall that we are interested in the following mild equation:
\begin{equation}\label{equa-mild-2}
Y_t=S_t\psi +\int_0^t S_{t-u}( f(Y_u) \circ dW_u), \quad t\in [0,T], \ \psi \in \cb,
\end{equation}
where $(S_t)_{t\geq 0}$ denotes the strongly continuous semigroup of operators generated by $-\Delta$ with Dirichlet boundary conditions, and $W$ is assumed to satisfy Hypothesis \ref{hypo-noise-3}.

\smallskip

The integral appearing in (\ref{equa-mild-2}) is thus understood in some \emph{Stratonovich sense}, an interpretation to be clarified in a convolutional setting, which is the main purpose of this first section. Once endowed with this interpretation, it turns out that (\ref{equa-mild-2}) reduces to a common mild Itô equation with an additional drift term, and accordingly the existence and uniqueness of $Y$ can be derived from well-known results (see Section \ref{sec:existence}).

\smallskip

Note that the following regularity assumption on $f$ will prevail throughout the section. 

\begin{hypothesis}\label{hypo-f}
The function $f:\RR \rightarrow \RR$ is bounded, of class $\mathcal{C}^2$ and with bounded derivatives.
\end{hypothesis}

\subsection{The Stratonovich integral}
\label{sec:strato-integral}

In order to interpret $\int_0^t S_{t-u}( f(Y_u) \circ dW_u)$, we restrict our attention to a particular class of processes $Y$. Namely, we assume that, on some filtered probability space $(\Omega,\mathcal{F},(\mathcal{F}_t)_{t\geq 0},P)$, 
$\{Y_t,\, t\in[0,T]\}$ is the unique $\cb$-valued mild solution of the following equation:
\begin{equation}\label{mild-ito-proc}
dY_t-\Delta Y_tdt=V^1_tdt+ V^2_tdW_t, \quad Y_0=\psi\in \cb,
\end{equation}
for some $\mathcal{F}_t$-adapted random fields $\{V^i_t,\, t\in[0,T]\}$, $i=1,2$, with continuous paths in $\cb$ (recall that $\cb:=L^2(0,1)$). Moreover, we assume that 
\beq
 \sup_{t\leq T} E[\|V^2_t\|^2_\cb] <+\infty.
\label{eq:557}
\eeq
In fact, such a process $Y$ is explicitly given (see e.g. \cite{DaPrato-Zabczyk}) by
\begin{equation}\label{mild-ito-proc-2}
Y_t=S_t \psi + \int_0^t S_{t-s} V^1_s ds + \int_0^t S_{t-s}\left( V^2_s \cdot dW_s\right).
\end{equation}
Now, a natural idea to define the integral in (\ref{equa-mild-2}) in some Stratonovich sense would be the following: 
introduce the kernel $G_{t-s}(x,y)$ of $S_{t-s}$ and, with the representation (\ref{repr-sum}) of $W$ in mind, set
$$ \left[\int_0^t S_{t-s}\left( f(Y_s) \circ dW_s\right)\right] (x) 
\ " =" \ \sum_{j=1}^\infty \sqrt{\la_j} \left( \int_0^t \langle G_{t-s}(x,*) f(Y_s),e_j\rangle_\cb \, \circ d\beta^j_s \right),
$$
where the symbol $*$ denotes the space variable and each integral $\int_0^t \langle G_{t-s}(x,*) \cdot f(Y_s),e_j\rangle_\cb \, \circ d\beta^j_s$ is interpreted in the (classical) Stratonovich sense. 
Nevertheless, it is a well-known fact that the process $Y$ defined by (\ref{mild-ito-proc-2}) is not always a $\cb$-valued semimartingale 
(in other words, $Y$ is not always a strong solution of (\ref{mild-ito-proc-2}), see e.g. \cite[Sec. 5.6]{DaPrato-Zabczyk}), 
making the definition of these integrals quite obscure at first sight.

\smallskip

To overcome this difficulty, we consider a standard semimartingale approximation of $Y$: for every $\ep >0$, let $Y^\ep$ be the unique (strong) solution of
\[
dY^\ep_t-\Delta_\ep Y^\ep_tdt=V^1_tdt+ V^2_tdW_t, \quad Y^\ep_0 = \psi,
\]
where
\[
 -\Delta_\ep:=\frac{1}{\ep}(\mbox{Id}-(\mbox{Id}-\ep \Delta)^{-1})
\]
stands for the Yosida approximation of $-\Delta$. In particular, $-\Delta_\ep$ defines a monotone and bounded operator which converges pointwise to $-\Delta$ (see e.g. \cite{Brezis-maxmon}). Then, for every fixed $\ep >0$, $Y^\ep$ is a semimartingale, and we have (see e.g. \cite[Proposition 7.5]{DaPrato-Zabczyk})
\begin{equation}
\lim_{\ep\rightarrow 0} \, \sup_{t\leq T} E \big[ \|Y^\ep_t-Y_t\|^2_{\cb}\big] =0.
\label{eq:556}
\end{equation}
This extrinsic procedure will lead us to the following interpretation:

\begin{proposition}\label{prop:defi-strato}
With the above notations, the family of Stratonovich integrals defined for all $t\in [0,T], x\in (0,1)$ by
\begin{equation}\label{strato-int-regu}
\left[\int_0^t S_{t-s}\left( f(Y^\ep_s) \circ dW_s\right)\right] (x) 
:= \lim_{u\nearrow t}  \sum_{j=1}^\infty \sqrt{\la_j}  \left( \int_0^u \langle G_{t-s}(x,*) \cdot f(Y^\ep_s),e_j\rangle_\cb \, \circ d\beta^j_s \right),
\end{equation}
where the latter limit is considered in $L^2(\Omega)$, converges in $L^1(\Omega; \cac^0([0,T];\cb))$ as $\ep$ tends to $0$. Its limit, that we denote by $\int_0^\cdot S_{\cdot-s}\left( f(Y_s) \circ dW_s\right)$, satisfies the relation
\begin{equation}\label{decompo-strato}
\int_0^t S_{t-s}\left( f(Y_s) \circ dW_s\right)=
\int_0^t S_{t-s}\left( f(Y_s) \cdot dW_s\right) + \int_0^t S_{t-s}\left( V^2_s \cdot f'(Y_s)\cdot  P\right)  ds,
\end{equation}
where $P(\xi):=\frac{1}{2}\sum_{k=1}^\infty \la_k e_k(\xi)^2$ and the notation $\int_0^t S_{t-s}\left( f(Y_s) \cdot dW_s\right)$ refers to the (usual) Itô integral.
\end{proposition}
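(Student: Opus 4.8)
The plan is to work first at the level of the Yosida-regularized process $Y^\ep$, where all objects are genuine semimartingales and the classical Stratonovich-to-It\^o correction is available, and then to pass to the limit $\ep \to 0$. First I would fix $\ep>0$ and $t\in[0,T]$, and for $u<t$ analyze the finite-dimensional truncations $\sum_{j=1}^J \sqrt{\la_j}\int_0^u \langle G_{t-s}(x,*) f(Y^\ep_s),e_j\rangle_\cb \circ d\be^j_s$. Since $Y^\ep$ solves a strong equation driven by $W=\sum_k\sqrt{\la_k}\be^k e_k$, each inner process $s\mapsto \langle G_{t-s}(x,*) f(Y^\ep_s),e_j\rangle_\cb$ is itself a semimartingale, whose $d\be^j$-martingale part comes from differentiating $f(Y^\ep_s)$ and picking out the coefficient of $d\be^j_s$ in $dY^\ep_s$, namely $\langle G_{t-s}(x,*) f'(Y^\ep_s) \sqrt{\la_j} V^2_s e_j,e_j\rangle_\cb$-type terms. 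The Stratonovich correction for the $j$-th integral is then $\tfrac12 \sqrt{\la_j}\int_0^u \langle G_{t-s}(x,*) f'(Y^\ep_s)\sqrt{\la_j} V^2_s e_j^2,\cdot\rangle\,ds$; summing over $j$ produces exactly $\int_0^u S^\ep_{t-s}\big(V^2_s f'(Y^\ep_s) P\big)\,ds$ with $P(\xi)=\tfrac12\sum_k\la_k e_k(\xi)^2$ (here $S^\ep$ is the semigroup generated by $-\Delta_\ep$). I would check that the series in $j$ for both the It\^o part and the correction term converge in $L^2(\Omega)$, uniformly in $u\le t$: for the correction this uses Hypothesis \ref{hypo-noise-3} (the bound $\sum_k \la_k k^{4\eta}<\infty$ and the fact that $e_k$ are bounded in sup norm gives $P\in L^\infty(0,1)$), the boundedness of $f'$, and \eqref{eq:557}; for the It\^o part one uses the standard isometry together with the smoothing of $G$. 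This yields, for each $\ep>0$,
\begin{equation}\label{eq:strato-regu-decompo}
\int_0^t S^\ep_{t-s}\big( f(Y^\ep_s) \circ dW_s\big)=\int_0^t S^\ep_{t-s}\big( f(Y^\ep_s) \cdot dW_s\big) + \int_0^t S^\ep_{t-s}\big( V^2_s f'(Y^\ep_s) P\big)\,ds,
\end{equation}
where the left-hand side is well-defined through the $L^2(\Omega)$-limit $u\nearrow t$ appearing in \eqref{strato-int-regu}; the $u\nearrow t$ limit exists because, up to the (continuous in $t$) correction term, it reduces to continuity of the It\^o convolution $u\mapsto \int_0^u S^\ep_{t-s}(f(Y^\ep_s)\,dW_s)$, which follows from the semimartingale property of $Y^\ep$ and dominated convergence in the isometry.

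Next I would pass to the limit $\ep\to 0$ in \eqref{eq:strato-regu-decompo}, term by term, in $L^1(\Omega;\cac^0([0,T];\cb))$. For the It\^o term one writes
\[
\int_0^t S^\ep_{t-s}\big(f(Y^\ep_s)\,dW_s\big)-\int_0^t S_{t-s}\big(f(Y_s)\,dW_s\big)=\int_0^t (S^\ep_{t-s}-S_{t-s})\big(f(Y^\ep_s)\,dW_s\big)+\int_0^t S_{t-s}\big((f(Y^\ep_s)-f(Y_s))\,dW_s\big),
\]
and controls the first piece by the pointwise convergence $S^\ep\to S$ together with a uniform bound on $S^\ep$ in operator norm, and the second piece by the Lipschitz property of $f$ and \eqref{eq:556}; the Da Prato--Zabczyk factorization / stochastic convolution estimates upgrade these $L^2(\Omega)$ bounds, uniform in $t$, to convergence of the continuous-in-$t$ versions. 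The drift correction term is handled the same way, now using boundedness of $f'$, \eqref{eq:557}, $P\in L^\infty$, and $S^\ep\to S$. Hence the left-hand side of \eqref{eq:strato-regu-decompo} also converges in $L^1(\Omega;\cac^0([0,T];\cb))$, its limit being by definition $\int_0^\cdot S_{\cdot-s}(f(Y_s)\circ dW_s)$, and \eqref{decompo-strato} follows by taking limits.

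The main obstacle I anticipate is the \emph{double limiting procedure combined with the infinite sum over $j$}: I must make sure the $L^2(\Omega)$-convergence of the series $\sum_j$ in \eqref{strato-int-regu} holds uniformly enough in both $u\le t$ and $\ep$ so that interchanging $\lim_{\ep\to0}$, $\lim_{u\nearrow t}$ and $\sum_j$ is legitimate, and that the resulting bounds are uniform in $t\in[0,T]$ (needed for $\cac^0([0,T];\cb)$-convergence rather than mere pointwise-in-$t$ convergence). This is precisely where Hypothesis \ref{hypo-noise-3} enters quantitatively — the extra factor $k^{4\eta}$ (together with the fact that $\eta>0$ can be taken small) is what forces $P$ into $L^\infty(0,1)$ and, more generally, controls the operator $Q$ well enough for the stochastic convolutions to live continuously in $\cb$. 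A secondary technical point is the exact bookkeeping of the Stratonovich correction at the regularized level: one must verify that the quadratic covariation $d\langle \langle G_{t-s}(x,*) f(Y^\ep_s),e_j\rangle_\cb , \be^j\rangle_s$ produces exactly the claimed $\tfrac12 \la_j f'(Y^\ep_s) G_{t-s}(x,*) (V^2_s e_j) e_j$ density (via It\^o's formula applied to $f$ and the strong form of the $Y^\ep$-equation), with no cross terms surviving after summation. I expect both points to be routine but notation-heavy, and I would relegate the detailed estimates to the body of the section.
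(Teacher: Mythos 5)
Your overall strategy coincides with the paper's: apply It\^o's formula to the semimartingale $s\mapsto\langle G_{t-s}(x,*)\cdot f(Y^\ep_s),e_j\rangle_\cb$ (legitimate precisely because $Y^\ep$ is a strong solution of the Yosida-regularized equation), read the Stratonovich correction off the $d\be^j$-martingale part, sum over $j$ to produce the trace term involving $P$, let $u\nearrow t$ by bounded convergence, and finally send $\ep\to0$ using the $L^2$-convergence of $Y^\ep$ to $Y$. However, your intermediate identity contains a concrete error: you replace the convolution semigroup $S_{t-s}$ by $S^\ep_{t-s}$ (generated by $-\Delta_\ep$). The Yosida regularization acts only on the \emph{process} (i.e.\ $Y$ is replaced by $Y^\ep$), not on the kernel: the definition (\ref{strato-int-regu}) is written with the kernel $G_{t-s}$ of the original heat semigroup, and $\Delta_\ep$ enters It\^o's formula only through the finite-variation drift $\Delta_\ep Y^\ep_r\,dr$ of $Y^\ep$, which contributes nothing to the quadratic covariation with $\be^j$. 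The correct $\ep$-level decomposition is therefore
\[
\int_0^u S_{t-s}\big(f(Y^\ep_s)\cdot dW_s\big)+\int_0^u \big[S_{t-s}\big(f'(Y^\ep_s)\cdot V^2_s\cdot P\big)\big]\,ds ,
\]
with $S$, not $S^\ep$; your version with $S^\ep$ on both sides decomposes a different object and does not match (\ref{strato-int-regu}). This also forces you into a spurious extra step (controlling $S^\ep_{t-s}-S_{t-s}$ inside stochastic convolutions uniformly in $t$) that disappears once the identity is stated correctly.

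After that correction your $\ep\to0$ argument is essentially the paper's: the It\^o isometry, boundedness of $S_{t-s}$, the Lipschitz property of $f$ and (\ref{eq:556}) for the martingale term; and for the drift term the embedding $L^1(0,1)\subset\cb_{-\frac14-\ep}$, Cauchy--Schwarz, (\ref{eq:556}) and (\ref{eq:557}). One further point you pass over too quickly: the reason the definition first restricts the integral to $[0,u]$ with $u<t$ is that the term $\int_0^s\langle\partial_tG_{t-r}(x,*)\cdot f(Y^\ep_r),e_j\rangle_\cb\,dr$ in It\^o's formula is singular at $r=t$; checking that every term of the expansion is well defined for $s\le u<t$ (via the spectral representation of $G$ and the Gaussian bound on $G_{t-r}$) is the part of the argument that genuinely requires estimates and should not be dismissed as bookkeeping.
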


\

Thus, the Stratonovich integral in (\ref{equa-mild-2}) will henceforth be understood as in the latter proposition, in the class of processes $Y$ 
satisfying an equation of the form (\ref{mild-ito-proc}). Note that the relation (\ref{decompo-strato}) provides us with a familiar decomposition 
for the Stratonovich integral as the sum of an Itô integral and a trace term, and it must be compared with the decomposition for the (standard) Stratonovich integral.

\smallskip

As a first step in the proof of Proposition \ref{prop:defi-strato}, observe that the two terms in the right-hand side of (\ref{decompo-strato}) are indeed well-defined processes in $L^2(\Omega;\cb)$. This is a straightforward consequence of the boundedness of $f,f'$, the trace-class assumption on $W$, and the fact that $P$ defines a uniformly bounded function.

\smallskip

We point out that, in the definition (\ref{strato-int-regu}), we first restrict the integral to $(0,u)$ with $u<t$ in order to avoid the singularity 
in the derivative of the kernel $G$.
This will be clarified in the proof of the next lemma. 

\begin{lemma}
With the above notations, we have that, for all $u\in (0,t)$,
\begin{align}\label{eq:554}
& \sum_{j=1}^\infty \sqrt{\la_j}  \left( \int_0^u \langle G_{t-s}(x,*) \cdot f(Y^\ep_s),e_j\rangle_\cb \, \circ d\beta^j_s \right) \\
& \qquad = \left[ \int_0^u S_{t-s}\left( f(Y^\ep_s) \cdot dW_s\right)\right] (x) 
+  \int_0^u [S_{t-s}( f'(Y^\ep_s) \cdot V^2_s\cdot P)] (x)\,  ds. \nonumber 
\end{align}
\end{lemma}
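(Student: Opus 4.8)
The plan is to prove the identity \eqref{eq:554} by passing from the classical one-dimensional Stratonovich integrals on the left-hand side to Itô integrals plus the corresponding trace (correction) terms, and then resumming the series. First I would fix $t\in(0,T]$ and $x\in(0,1)$ and work on the sub-interval $[0,u]$ with $u<t$: on this interval the map $s\mapsto G_{t-s}(x,\cdot)$ is smooth into $\cb$ (no singularity at $s=t$), so for each fixed $j$ the integrand $s\mapsto \langle G_{t-s}(x,*)f(Y^\ep_s),e_j\rangle_\cb = \langle S_{t-s}(f(Y^\ep_s)e_j)(x)\rangle$ — more precisely $\langle G_{t-s}(x,*)f(Y^\ep_s),e_j\rangle_\cb$ — is a genuine real-valued Itô process driven by the finitely many Brownian motions appearing in its Itô differential. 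The key input is that $Y^\ep$ is a true semimartingale (it solves the strong equation with the bounded Yosida generator $\Delta_\ep$), with Itô differential $dY^\ep_s=\Delta_\ep Y^\ep_s\,ds + V^1_s\,ds + V^2_s\,dW_s$; hence by the standard Stratonovich-to-Itô conversion in one dimension,
\[
\int_0^u \langle G_{t-s}(x,*) f(Y^\ep_s),e_j\rangle_\cb \circ d\beta^j_s
= \int_0^u \langle G_{t-s}(x,*) f(Y^\ep_s),e_j\rangle_\cb\, d\beta^j_s
+ \tfrac12 \int_0^u d\Big\langle \langle G_{t-s}(x,*)f(Y^\ep_\cdot),e_j\rangle_\cb,\ \beta^j\Big\rangle_s .
\]

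Next I would compute that cross-variation. Since $t-s$ is of bounded variation, only the $f(Y^\ep_s)$ factor contributes a martingale part; using the chain rule, $d[f(Y^\ep_s)] = f'(Y^\ep_s)\,dY^\ep_s + (\text{bounded variation})$, and the martingale part of $dY^\ep_s$ is $V^2_s\,dW_s = \sum_{k}\sqrt{\la_k}\,V^2_s e_k\, d\beta^k_s$. Therefore the bracket with $\beta^j$ picks out exactly the $k=j$ term, and after moving $G_{t-s}(x,*)$ and $e_j$ back inside one gets
\[
\tfrac12\, d\Big\langle \langle G_{t-s}(x,*)f(Y^\ep_\cdot),e_j\rangle_\cb,\ \beta^j\Big\rangle_s
= \tfrac12\sqrt{\la_j}\, \langle G_{t-s}(x,*)\, f'(Y^\ep_s)\, V^2_s\, e_j^2,\ 1\rangle_\cb\, ds
= \tfrac12\sqrt{\la_j}\, \big[S_{t-s}(f'(Y^\ep_s) V^2_s\, e_j^2)\big](x)\, ds .
\]
Multiplying by $\sqrt{\la_j}$, summing over $j\geq1$ and recalling $P(\xi)=\tfrac12\sum_{k\geq1}\la_k e_k(\xi)^2$, the correction terms sum (by Fubini, justified by the trace-class bound on $\sum_j\la_j\|e_j^2\|_\infty$, the boundedness of $f'$ and \eqref{eq:557}) to exactly $\int_0^u [S_{t-s}(f'(Y^\ep_s) V^2_s P)](x)\,ds$, while the Itô terms sum by definition (e.g. via isometry and dominated convergence) to $\big[\int_0^u S_{t-s}(f(Y^\ep_s)\,dW_s)\big](x)$. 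This yields \eqref{eq:554}.

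The main obstacle I expect is the careful justification of the two interchanges: (a) that the one-dimensional Stratonovich integral with respect to $\beta^j$ really does split into the stated Itô integral plus the quadratic-covariation term — this requires checking that $s\mapsto \langle G_{t-s}(x,*)f(Y^\ep_s),e_j\rangle_\cb$ is a continuous semimartingale with an explicit decomposition, which is where the smoothness of $G_{t-s}(x,\cdot)$ on $[0,u]$ (hence the restriction $u<t$) is essential; and (b) the exchange of the infinite sum over $j$ with the stochastic and Lebesgue integrals, for which one controls the $L^2(\Omega)$-norm of the tails uniformly in using $\sum_{j\geq1}\la_j<\infty$, $\sup_s E\|V^2_s\|_\cb^2<\infty$, and the uniform boundedness of the kernel $x\mapsto G_{t-s}(x,*)$ against $L^2$ test functions away from $s=t$. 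Everything else is a routine application of the Itô formula for the bounded-operator equation satisfied by $Y^\ep$.
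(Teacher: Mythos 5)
Your plan follows essentially the same route as the paper: establish that $s\mapsto \langle G_{t-s}(x,*)f(Y^\ep_s),e_j\rangle_\cb$ is a real-valued semimartingale on $[0,u]$ (using the Yosida-regularized equation for $Y^\ep$ and the restriction $u<t$ to keep the kernel smooth), apply the classical Stratonovich-to-It\^o conversion, compute the bracket with $\beta^j$ so that only the $k=j$ mode survives, and resum over $j$ via $P=\tfrac12\sum_k\la_k e_k^2$ together with $\sum_k\la_k<\infty$ and the bound on $V^2$. The only inaccuracy is the phrase ``finitely many Brownian motions'': the martingale part of the integrand is a series over all the $\beta^k$, as in the paper's It\^o expansion, but this does not affect the covariation computation or the conclusion.
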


\begin{proof}
For any fixed $(u,x)\in (0,t)\times (0,1)$ and $j\in \mathbb{N}$, the process $s\mapsto \langle G_{t-s}(x,*) \cdot f(Y^\ep_s),e_j\rangle_\cb$, $s\in [0,u]$,
defines a (real-valued) semimartingale. Hence, we can use Itô's formula to assert that
\begin{align}
 \langle G_{t-s}(x,*) \cdot f(Y^\ep_s),e_j\rangle_\cb & = \langle G_t(x,*)\cdot f(\psi),e_j\rangle_\cb 
+ \int_0^s \langle \partial_t G_{t-r}(x,*) \cdot f(Y^\ep_r), e_j\rangle_\cb \, dr \nonumber \\
& \quad + \int_0^s \langle G_{t-r}(x,*)\big\{ (\Delta_\ep Y^\ep_r + V^1_r)\cdot  f'(Y^\ep_r) + \frac12 V^2_r\cdot  f''(Y^\ep_r)\big\}, e_j\rangle_\cb \, dr \nonumber \\
& \quad + \sum_{k=1}^\infty \sqrt{\la_k} \int_0^s \langle G_{t-r}(x,*)\cdot  f'(Y^\ep_r) \cdot V^2_r\cdot  e_k,e_j\rangle_\cb  \, d\beta^k_r. 
\label{eq:555}
\end{align}
The hypotheses on $f$ and $V^i$, and the fact that $s\leq u<t$, guarantee that all terms on the right-hand side above are well-defined. More precisely, 
using the spectral decomposition of $G$ given by
\[
 G_{t-r}(x,y)= \sum_{k=1}^\infty e^{-k^2 \pi^2 (t-r)} e_k(x) e_k(y),
\]
one proves that, $P$-a.s., 
\[
 \int_0^s \langle \partial_t G_{t-r}(x,*) \cdot f(Y^\ep_r), e_j\rangle_\cb \, dr \leq C\,  \sum_{k=1}^\infty \Big( e^{-k^2 \pi^2 (t-s)} - e^{-k^2 \pi^2 t}\Big),
\]
and the latter is finite since $s\in (0,t)$. As far as the second pathwise integral on the right-hand side of (\ref{eq:555}) is concerned, we have, for instance, 
\begin{align*}
 E \left[ \left| \int_0^s \langle G_{t-r}(x,*)\cdot \Delta_\ep Y^\ep_r \cdot f'(Y^\ep_r), e_j\rangle_\cb \, dr\right|^2 \right] & 
 \leq C\, E \left[ \int_0^s \int_0^1 G_{t-r}(x,y) |[\Delta_\ep Y^\ep_r] (y)|^2 \, dydr \right] \\
& \leq C\, \Big( \sup_{r\leq T} E[\|Y^\ep_r\|^2_\cb] \Big) \int_0^s \frac{1}{\sqrt{t-r}} dr < +\infty.
\end{align*}
Here, we have used the fact that $0\leq G_{t-r}(x,y) \leq (2\pi(t-r))^{-1/2} \, e^{-\frac{(x-y)^2}{2(t-r)}}$.
Similarly, one easily proves that the last term in (\ref{eq:555}) is a well-defined square-integrable random variable. 

\smallskip

Plugging the expression (\ref{eq:555}) in (\ref{eq:554}) and using the definition of the (standard) Stratonovich integral 
(see e.g. formula (3.9) in \cite[p. 156]{Karatzas-Shreve}), we end up with   
\begin{align*}
 & \sum_{j=1}^\infty \sqrt{\la_j}  \left( \int_0^u \langle G_{t-s}(x,*) \cdot f(Y^\ep_s),e_j\rangle_\cb \, \circ d\beta^j_s \right)\\
& \quad = 
\sum_{j=1}^\infty \sqrt{\la_j}  \left( \int_0^u \langle G_{t-s}(x,*) \cdot f(Y^\ep_s),e_j\rangle_\cb \, d\beta^j_s \right) \\
& \qquad \qquad  + \frac12 \sum_{j=1}^\infty \la_j \int_0^u \langle G_{t-s}(x,*)\cdot  f'(Y^\ep_s) \cdot V^2_s\cdot  e_j,e_j\rangle_\cb  \, ds \\
& \quad = \left[ \int_0^u S_{t-s}\left( f(Y^\ep_s) \cdot dW_s\right)\right] (x) + 
 \int_0^u \left[ S_{t-s}( f'(Y^\ep_s) \cdot V^2_s\cdot P)\right] (x)\, ds,
\end{align*}
which concludes the proof.
\end{proof}

We can now go back to our main statement.

\begin{proof}[Proof of Proposition \ref{prop:defi-strato}]
First, owing to the previous lemma, we have that the limit on the right-hand side of (\ref{strato-int-regu}) equals to  
\[
 \left[ \int_0^t S_{t-s}\left( f(Y^\ep_s) \cdot dW_s\right)\right] (x) 
+  \int_0^t [S_{t-s}( f'(Y^\ep_s) \cdot V^2_s\cdot P)] (x)\,  ds.
\]
This can be proved using the bounded convergence theorem. Hence,
the proof reduces to the two assertions:
\begin{equation}\label{eq:334}
 \lim_{\ep\rightarrow 0} \, \sup_{t\leq T} E \bigg[\left\| \int_0^t S_{t-s}\left( f(Y^\ep_s) \cdot dW_s\right) - 
 \int_0^t S_{t-s}\left( f(Y_s) \cdot dW_s\right)\right\|_{\cb}\bigg] =0.
\end{equation}
and 
\begin{equation}\label{eq:335}
 \lim_{\ep\rightarrow 0} \, \sup_{t\leq T} E \bigg[ \left\| \int_0^t S_{t-s} ( f'(Y^\ep_s) \cdot V^2_s\cdot P) ds - 
 \int_0^t S_{t-s} ( f'(Y_s) \cdot V^2_s\cdot P) ds \right\|_{\cb}\bigg] =0.
\end{equation}
Let us first deal with (\ref{eq:334}). By the isometry property of the stochastic integral,
the boundedness of $S_{t-s}$ and the assumptions on $f$, we have:
\begin{align*}
 & E\bigg[ \left\| \int_0^t S_{t-s}\left( f(Y^\ep_s) \cdot dW_s\right) - 
 \int_0^t S_{t-s}\left( f(Y_s) \cdot dW_s\right)\right\|^2_{\cb} \bigg]\\
& \quad = E\bigg[ \int_0^t \sum_{k=1}^\infty  \lambda_k \|S_{t-s}( [f(Y^\ep_s)-f(Y_s)] e_k )\|_{\cb}^2\, ds\bigg] \\
& \quad \leq \sum_{k=1}^\infty \lambda_k \, E\bigg[ \int_0^T \int_0^1 |f(Y^\ep(s,y))-f(Y(s,y))|^2 |e_k(y)|^2\, dy ds\bigg] \\
& \quad \leq C\sum_{k=1}^\infty \lambda_k \, \sup_{t\leq T} E \big[ \|Y^\ep_t-Y_t\|_{\cb}^2\big]  \leq C\, \sup_{t\leq T} E \big[ \|Y^\ep_t-Y_t\|_{\cb}^2\big],
\end{align*}
upon recalling that $\sum_{k=1}^\infty \lambda_k < \infty$. 

In order to prove (\ref{eq:335}), we use the Sobolev embedding $L^1(0,1) \subset \cb_{-\frac14-\ep}$
and the assumptions on $f$ and $V^2$. In fact, we have
\begin{eqnarray}
\lefteqn{E\bigg[ \Big\|\int_0^t S_{t-s}\big( [f'(Y^\ep_s)-f'(Y_s)] \cdot V^2_s \cdot P\big) \, ds\Big\|_\cb \bigg]}\label{l-1-conv}\\
&\leq & C\, \|P\|_{\cb_\infty}\int_0^t \lln t-s \rrn^{-\frac14-\ep} E\big[ \| [f'(Y^\ep_s)-f'(Y_s)] \cdot V^2_s\|_{L^1}\big] \, ds\nonumber\\
&\leq & C\, \|P\|_{\cb_\infty}\int_0^t \lln t-s \rrn^{-\frac14-\ep} E\big[ \| [f'(Y^\ep_s)-f'(Y_s)]\|_\cb \| V^2_s\|_{\cb}\big] \, ds\nonumber\\
&\leq & C\, \Big( \sup_{t\leq T} E[\|Y^\ep_t-Y_t\|_\cb^2]^{1/2}\Big)\Big( \sup_{t\leq T} E[\|V^2_t\|_\cb^2]^{1/2}\Big) \nonumber
\end{eqnarray}
(recall that $\|\cdot\|_{\cb_\infty}$ refers to the supremum norm on $[0,1]$). Therefore, by the assumptions on $V^2$, the convergence (\ref{eq:556}) guarantees that (\ref{eq:334}) and (\ref{eq:335}) hold, and this lets us conclude the proof.
\end{proof}

%%%%%%%%%%%%%%%%%%%%%%%%%%%%%%%%%%%%%%%%%%%%%%%%%%%%%%%%%%%%%%%%%%%%%%%%%%%%%%%

\subsection{Existence and uniqueness of solution}
\label{sec:existence}

With the notations of the previous section, consider the following mild (Itô) equation:
\begin{equation}
Y_t=S_t \psi + \int_0^t S_{t-s}\left( f(Y_s) \cdot dW_s\right) + \int_0^t S_{t-s}( f'(Y_s) \cdot f(Y_s)\cdot P) \, ds,
\label{eq:336} 
\end{equation}
where we recall that $P(\xi)= \sum_{k=1}^\infty \lambda_k e_k (\xi)^2$. Hypotheses \ref{hypo-noise-3} and \ref{hypo-f} allow us to apply standard methods and guarantee that this equation admits a unique $L^2(\Omega;\cb)$-valued solution $Y$ (see \cite{DaPrato-Zabczyk}). 
In particular, we observe that $Y$ solves an equation of the form (\ref{mild-ito-proc})  
with $V^1_t= f'(Y_t) \cdot f(Y_t)\cdot P$ and $V^2_t=f(Y_t)$ and these random fields fulfill the assumptions specified in the previous 
section. Thus, for all $t\in [0,T]$, we can define the Stratonovich integral $\int_0^t S_{t-s}\left( f(Y_s) \circ dW_s\right)$ through Proposition \ref{prop:defi-strato} and we know that
\[
 \int_0^t S_{t-s}\left( f(Y_s) \circ dW_s\right) = \int_0^t S_{t-s}\left( f(Y_s) \cdot dW_s\right) + \int_0^t S_{t-s}( f'(Y_s) \cdot f(Y_s)\cdot P) ds,
\]
which yields that $Y$ is also a solution of (\ref{equa-mild-2}). 

\smallskip

Conversely, due to (\ref{decompo-strato}), it is readily checked that any solution of (\ref{equa-mild-2}) in the class of processes satisfying an equation of the form (\ref{mild-ito-proc}) is also a solution of (\ref{eq:336}) (use the uniqueness of $V^1,V^2$ in (\ref{mild-ito-proc-2})). This provides us with the following existence and uniqueness result.

\begin{theorem}\label{thm:wp}
Assume that Hypotheses \ref{hypo-noise-3} and \ref{hypo-f} are both satisfied and that $\psi\in \cb$. Then, there exists a unique $\cb$-valued 
process $\{Y_t,\, t\in [0,T]\}$ which solves
\[
Y_t=S_t\psi +\int_0^t S_{t-u}( f(Y_u) \circ dW_u), \quad t\in [0,T].
\]
Moreover, $Y$ has a version with continuous paths and it holds that $\sup_{t\leq T} E[\|Y_t\|^2_\cb] <\infty$.
\end{theorem}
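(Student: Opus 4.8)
The plan is to reduce the Stratonovich problem to the mild It\^o equation (\ref{eq:336}), whose well-posedness follows from classical SPDE theory, and then to transfer the conclusion back and forth by means of Proposition \ref{prop:defi-strato}.

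\textbf{Step 1: existence and uniqueness for the It\^o equation.} I would first show that (\ref{eq:336}) has a unique solution $Y\in L^2(\Omega;\cac^0([0,T];\cb))$ by a standard Picard fixed-point argument. Under Hypothesis \ref{hypo-f} the Nemytskii maps $\vp\mapsto f(\vp)$ and $\vp\mapsto f'(\vp)f(\vp)$ are bounded and globally Lipschitz from $\cb$ into $\cb$, and since $P$ is a bounded function the drift coefficient $\vp\mapsto f'(\vp)f(\vp)\cdot P$ is bounded and globally Lipschitz as well, while the diffusion coefficient $\vp\mapsto f(\vp)$ is bounded. Under Hypothesis \ref{hypo-noise-3} the noise $W$ is trace class, so the stochastic convolution $\int_0^t S_{t-s}(f(Y_s)\cdot dW_s)$ is a well-defined $\cb$-valued process, while the deterministic convolution with the contraction semigroup $(S_t)$ is harmless. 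The usual contraction estimates (as in \cite{DaPrato-Zabczyk}) then produce a unique solution, and the factorization method provides a version with continuous paths together with $\sup_{t\leq T}E[\|Y_t\|^2_\cb]<\infty$.

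\textbf{Step 2: from It\^o to Stratonovich.} The solution $Y$ of (\ref{eq:336}) is of the form (\ref{mild-ito-proc}) with $V^1_t=f'(Y_t)f(Y_t)\cdot P$ and $V^2_t=f(Y_t)$; both have continuous $\cb$-valued paths and $V^2$ satisfies (\ref{eq:557}) by boundedness of $f$. Hence Proposition \ref{prop:defi-strato} applies, and its decomposition (\ref{decompo-strato}), together with the identity $V^2_t\cdot f'(Y_t)\cdot P=f'(Y_t)f(Y_t)\cdot P$, shows that the right-hand side of (\ref{eq:336}) equals $S_t\psi+\int_0^t S_{t-s}(f(Y_s)\circ dW_s)$. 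Therefore $Y$ solves the Stratonovich equation.

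\textbf{Step 3: uniqueness for the Stratonovich equation.} Conversely, any $\cb$-valued process $Y$ solving the Stratonovich equation is, by the very definition of the Stratonovich integral adopted in Section \ref{sec:strato-integral}, of the form (\ref{mild-ito-proc}) for some adapted fields $V^1,V^2$ with continuous paths in $\cb$, $V^2$ satisfying (\ref{eq:557}); writing $Y$ in its mild It\^o form (\ref{mild-ito-proc-2}) and substituting (\ref{decompo-strato}), the uniqueness of the representation (\ref{mild-ito-proc-2}) forces $V^2_t=f(Y_t)$ and $V^1_t=f'(Y_t)f(Y_t)\cdot P$, so that $Y$ solves (\ref{eq:336}); the uniqueness from Step 1 then gives the claim. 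The only delicate point is the bookkeeping around the class of admissible processes on which the Stratonovich integral has been given a meaning; since no new estimate is required beyond those of Section \ref{sec:strato-integral}, I do not expect a genuine obstacle here — this theorem is essentially a repackaging of Proposition \ref{prop:defi-strato} and standard well-posedness.
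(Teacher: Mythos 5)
Your proposal is correct and follows exactly the paper's own route: well-posedness of the It\^o equation (\ref{eq:336}) by standard Da Prato--Zabczyk arguments, the forward implication via Proposition \ref{prop:defi-strato} with $V^1_t=f'(Y_t)f(Y_t)\cdot P$ and $V^2_t=f(Y_t)$, and the converse via the decomposition (\ref{decompo-strato}) together with uniqueness of the representation (\ref{mild-ito-proc-2}). The caveat you raise in Step 3 — that uniqueness is only asserted within the class of processes of the form (\ref{mild-ito-proc}), since that is the only class on which the Stratonovich integral has been defined — is precisely how the paper reads its own statement, so there is no gap.
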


%%%%%%%%%%%%%%%%%%%%%%%%%%%%%%%%%%%%%%%%%%%%%%%%%%%%%%%%%%%%%%%%%%%%%%%%%%%%%%%%%%%%%%%%%%%%%%%%%%%%%%%%%%%%%%%%%%%%%%%%%%%%
%%%%%%%%%%%%%%%%%%%%%%%%%%%%%%%%%%%%%%%%%%%%%%%%%%%%%%%%%%%%%%%%%%%%%%%%%%%%%%%%%%%%%%%%%%%%%%%%%%%%%%%%%%%%%%%%%%%%%%%%%%%%

\section{A rough-paths type analysis of the equation}
\label{sec:prel}

Let us now turn to the proof of Theorem \ref{thm:continuity}. As announced in the Introduction, our strategy is based on a rough-paths type expansion of the equation. Accordingly, a few ingredients taken from the so-called \emph{convolutional rough paths theory}, that is rough paths theory adapted to mild evolution equation, must be introduced in the first place.

%Next step will exploit the rough path machinery in order to prove that the increments of $Y$ can be suitably represented in terms of a kind of iterated integral with respect to $W$.
%This will be done in Section \ref{sec:representation}. Finally, such an expansion will be used in Section \ref{sec:continuity} to obtain our continuity
%result for equation (\ref{equa-mild-2}).

%The first part of this section is devoted to introduce the fractional Sobolev spaces in which our processes will live and recall
%the main Sobolev embedding results needed in the sequel (see e.g \cite{run-sick}). Secondly, we will present the main techniques of
%the rough path theory which will be used to obtain our {\it{continuity}} result, including the definition of several types of Hölder spaces of functions.
%For a complete treatement of all these tools, we refer the reader to \cite{GT,RHE}.

\subsection{Tools from (convolutional) rough paths theory}
\label{subsec:incre}

We gather here some preliminary material borrowed from \cite{GT} (see also \cite{RHE,RHE-glo}). As underlined in the latter references, a key point towards a fruitful pathwise analysis of (\ref{equa-mild}) lies in the following elementary observation: due to the semigroup property $S_{t+t'}=S_t \cdot S_{t'}$, it holds that, for any $s<t$,
\bean
Y_t-Y_s&=&\int_s^t S_{t-u}(f(Y_u) \circ dW_u)+\int_0^s \big[S_{t-u}-S_{s-u} \big](f(Y_u) \circ dW_u)\\
&=&\int_s^t S_{t-u} \lp     f(Y_u) \circ dW_u\rp+a_{ts} Y_s , \quad \text{where} \quad a_{ts}:=S_{t-s}-\id.
\eean
Otherwise stated, by setting $(\delha Y)_{ts}:=(Y_t-Y_s)-a_{ts} Y_s$, the equation (\ref{equa-mild}) can be equivalently written in the convenient form:
\begin{equation}\label{eq-base}
Y_0=\psi, \quad \quad (\delha Y)_{ts}=\int_s^t S_{t-u} \lp  f(Y_u)\circ dW_u \rp, \quad 0\leq s\leq t\leq 1.
\end{equation}
This should be compared with the behaviour of solutions to standard (stochastic) differential equations: if $X_t=a+\int_0^t \si(X_u) \, dB_u$, then $(\der X)_{ts}:=X_t-X_s=\int_s^t \si(X_u) \, dB_u$. Then, in a rough-paths setting, we are naturally led to extend the definition of $\delha$ to processes with $2$ variables, as follows:

\begin{notation}
For all processes $y:[0,T] \to \cb$ and $z:\cs_2\to \cb$, where $\cs_2:=\{(s,t)\in [0,T]^2: \ s\leq t\}$ denotes the two-dimensional simplex,
we set, for $s\leq u\leq t \in [0,T]$:
\begin{equation}\label{incr-std}
(\der y)_{ts}:=y_t-y_s, \quad  \quad (\delha y)_{ts}:=(\der y)_{ts}-a_{ts}y_s=y_t-S_{t-s}y_s,
\end{equation}
\begin{equation}\label{incr-twis}
(\delha z)_{tus}:=z_{ts}-z_{tu}-S_{t-u}z_{us}.
\end{equation}
\end{notation}

To make the notations (\ref{incr-std})-(\ref{incr-twis}) even more legitimate in this convolutional context,
let us point out the following algebraic properties:
\begin{proposition}\label{prop:telesc}
For any $y:[0,T]\to \cb$, it holds:
\begin{itemize}
\item[(i)] Telescopic sum: $\delha (\delha y)_{tus}=0$ and $(\delha y)_{ts}=\sum_{i=0}^{n-1} S_{t-t_{i+1}}(\delha y)_{t_{i+1}t_i}$ for any partition $\{s=t_0 <t_1 <\ldots <t_n=t\}$ of an interval $[s,t]$ of $[0,T]$.
\item[(ii)] Chasles relation: if $\cj_{ts}:=\int_s^t S_{t-u} \lp  y_u \cdot  dW_u\rp$, then $\delha \cj=0$.
\end{itemize}
\end{proposition}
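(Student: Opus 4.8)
The plan is to verify both identities directly from the definitions in \eqref{incr-std}--\eqref{incr-twis}, using only the semigroup property $S_{t+t'}=S_tS_{t'}$ (equivalently $a_{ts}$-algebra) and, for part (ii), the elementary additivity of the convolutional integral over adjacent subintervals.

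\textbf{Part (i): the cocycle relation $\delha(\delha y)=0$ and the telescopic sum.} First I would compute $\delha(\delha y)_{tus}$ by substituting $z_{ts}:=(\delha y)_{ts}=y_t-S_{t-s}y_s$ into \eqref{incr-twis}. This gives
\[
(\delha(\delha y))_{tus}=(\delha y)_{ts}-(\delha y)_{tu}-S_{t-u}(\delha y)_{us}
=\bigl(y_t-S_{t-s}y_s\bigr)-\bigl(y_t-S_{t-u}y_u\bigr)-S_{t-u}\bigl(y_u-S_{u-s}y_s\bigr).
\]
The $y_t$ terms cancel, the $S_{t-u}y_u$ terms cancel, and by the semigroup property $S_{t-u}S_{u-s}=S_{t-s}$, so the remaining terms $-S_{t-s}y_s+S_{t-u}S_{u-s}y_s$ cancel as well; hence $\delha(\delha y)=0$. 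For the telescopic identity, I would argue by induction on $n$. The case $n=1$ is the definition. For the inductive step, apply $\delha(\delha y)=0$ at the triple $(t,t_n,s)$ (with $t_n$ the last intermediate node before $t$), i.e. $(\delha y)_{ts}=(\delha y)_{tt_n}+S_{t-t_n}(\delha y)_{t_ns}$, and then use the induction hypothesis on the partition $\{s=t_0<\cdots<t_n\}$ of $[s,t_n]$ together with $S_{t-t_n}S_{t_n-t_{i+1}}=S_{t-t_{i+1}}$ to rewrite $S_{t-t_n}(\delha y)_{t_ns}=\sum_{i=0}^{n-2}S_{t-t_{i+1}}(\delha y)_{t_{i+1}t_i}$. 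Adding the $i=n-1$ term gives the claim.

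\textbf{Part (ii): $\delha\cj=0$ for the convolutional integral.} With $\cj_{ts}:=\int_s^t S_{t-u}(y_u\,dW_u)$, I would split the integral defining $\cj_{ts}$ at the intermediate time $u$:
\[
\cj_{ts}=\int_s^u S_{t-r}(y_r\,dW_r)+\int_u^t S_{t-r}(y_r\,dW_r).
\]
In the first integral, for $r\in[s,u]$ one has $S_{t-r}=S_{t-u}S_{u-r}$ by the semigroup property, so $\int_s^u S_{t-r}(y_r\,dW_r)=S_{t-u}\int_s^u S_{u-r}(y_r\,dW_r)=S_{t-u}\,\cj_{us}$; the linearity and continuity of $S_{t-u}$ justify moving it outside the stochastic (or Riemann--Lebesgue) integral. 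The second integral is precisely $\cj_{tu}$. Hence $\cj_{ts}=\cj_{tu}+S_{t-u}\cj_{us}$, which is exactly the statement $(\delha\cj)_{tus}=0$. The same computation applies verbatim if one replaces the It\^o differential $dW_u$ by a Stratonovich differential $\circ\,dW_u$ or by $d\widetilde W_u$ in the Riemann--Lebesgue sense, since only linearity of the integral in the integrand and the interval-additivity property are used.

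\textbf{Expected main obstacle.} None of this is deep: both parts are purely algebraic consequences of $S_{t+t'}=S_tS_{t'}$ plus additivity of the integral. The only point requiring a word of care is the interchange of the bounded operator $S_{t-u}$ with the integral sign in part (ii); this is standard for Hilbert-space-valued It\^o integrals (and trivial in the pathwise Riemann--Lebesgue case), and I would simply cite the relevant result in \cite{DaPrato-Zabczyk} rather than reprove it.
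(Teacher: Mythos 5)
Your proof is correct and follows exactly the route the paper has in mind: the paper gives no written proof, remarking only that both points are straightforward consequences of the semigroup property, and your argument supplies precisely those omitted details (direct cancellation for $\delha(\delha y)=0$, induction for the telescopic sum, and interval-splitting plus commuting $S_{t-u}$ past the integral for (ii)). The only blemish is a harmless off-by-one in the indexing of your inductive step; the underlying mechanism is sound.
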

Both points (i) and (ii) are straighforward consequences of the semigroup property. Now, in accordance with the new expression (\ref{eq-base}) for the equation, a $\delha$-version of the classical Hölder norm must come into the picture. To this end, fix a subinterval $I\subset [0,T]$ and a Banach space $V$. Then, if $y:I\to V$ and $\la>0$, set
\begin{equation}\label{delha-norm}
\cn[y;\cacha^\la(I;V)] :=\sup_{s<t \in I} \frac{\norm{(\delha y)_{ts}}_V}{\lln t-s \rrn^\la},
\end{equation}
and define $\cacha^\la(I;V)$ as the set of processes $y:I\to V$ such that $\cn[y;\cacha^\la(I;V)] <\infty$.

\smallskip

As we will see it in the sequel, a proper control for the expansion of $\int_s^t S_{t-u}(f(Y_u) \circ dW_u)$ also requires the extension of both definitions (\ref{der-norm}) and (\ref{delha-norm}) to processes with 2 or 3 variables. Precisely, if $z:\cs_2 \to V$ and $h:\cs_3 \to V$, where $\cs_3:=\{(t,u,s) \in [0,T]^3: \ s\leq u\leq t \}$, we set
\begin{equation}
\cn[z;\cac_2^\la(I;V)]:=\sup_{ s <t\in I} \frac{\norm{z_{ts}}_V}{\lln t-s \rrn^\la}, \quad
\quad \cn[h;\cac_3^\la(I;V)]:=\sup_{ s <u<t\in I} \frac{\norm{h_{tus}}_V}{\lln t-s \rrn^\la},
\end{equation}
and we define $\cac_2^\la(I;V)$ (resp. $\cac_3^\la(I;V)$) along the same lines as $\cacha^\la(I;V)$. Observe for instance that if $y\in \cac_2^\la(I;\cl(V,W))$ and $z\in \cac_2^\be(I;V)$, then
the process $h$ defined as $h_{tus}=y_{tu}z_{us}$ ($s\leq u\leq t\in I$) belongs to $\cac_3^{\la+\be}(I;W)$.

\smallskip

Note that when $I=[0,T]$, we will more simply write $\cac_k^\la(V):=\cac_k^\la(I;V)$ for $k\in \{1,2,3\}$. Besides, from now on, we use the following  convenient notation for products of processes.
\begin{notation}\label{convention-indices}
If $g:\cs_n \to \cl(V,W)$ and $h:\cs_m \to W$ (with $n,m\in\{1,2,3\}$), we define the product $gh: \cs_{n+m-1} \to W$ by the formula
\begin{equation}\label{conve}
(gh)_{t_1 \ldots t_{m+n-1}}:=g_{t_1 \ldots t_n}h_{t_n \ldots t_{n+m-1}}.
\end{equation}
\end{notation}

With this convention, it is readily checked that if $g: \cs_2 \to \cl(\cb_\ka,\cb_\al)$ and $h:[0,T] \to \cb_\ka$, then $\delha(gh): \cs_{3} \to\cb_\al$ obeys the rule:
\begin{equation}\label{rel-alg-prod}
\delha (gh)=(\delha g)h-g(\der h).
\end{equation}

\smallskip

To end up with this toolbox, let us report what may be seen as the cornerstone result of the convolutional rough paths theory, namely the existence of (some kind of) an inverse operator for $\delha$, denoted by $\Laha$, and which will play a prominent role in our forthcoming decomposition (\ref{representation}). In brief, this operator allows us to get both a nice expression and a sharp estimate for the \emph{regular} terms, i.e., the terms with Hölder regularity strictly larger than $1$, that arise from the expansion of $\int_s^t S_{t-u}(f(Y_u) \circ dW_u)$ (see in particular the proof of Lemma \ref{cor:def-z}).

\begin{theorem}\label{existence-laha}
Fix an interval $I\subset [0,T]$, a parameter $\ka \geq 0$ and let $\mu >1$. For any $h\in \cac_3^\mu(I;\cb)\cap \text{Im} \, \delha$,
there exists a unique element $$\Laha h \in \cap_{\al\in [0,\mu)} \cac_2^{\mu-\al}(I;\cb_{\al})$$ such that $\delha(\Laha h)=h$.
Moreover, $\Laha h$ satisfies the following contraction property: for all $\al\in [0,\mu)$,
\begin{equation}\label{contraction-laha}
\cn[\Laha h;\cac_2^{\mu-\al}(I;\cb_{\al})] \leq c_{\al,\mu} \, \cn[h;\cac_3^\mu(I;\cb)].
\end{equation}
\end{theorem}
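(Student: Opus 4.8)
The plan is to construct $\Laha h$ explicitly as a limit of Riemann-type sums associated with the "twisted increment" $\delha$, in the spirit of the sewing lemma of Gubinelli, but adapted to the convolutional setting where the semigroup $S$ appears in the gluing rule \eqref{incr-twis}. Since $h\in\text{Im}\,\delha$, fix once and for all some $g:\cs_2\to\cb$ with $\delha g=h$; the element $\Laha h$ will then be $g$ minus its "$\delha$-exact principal part", so that what remains has the claimed regularity. Concretely, for a partition $\pi=\{s=t_0<t_1<\dots<t_n=t\}$ of $[s,t]$ set
\[
M^\pi_{ts}:=\sum_{i=0}^{n-1} S_{t-t_{i+1}}\, g_{t_{i+1}t_i},
\]
and show that as the mesh $|\pi|\to 0$ these converge (in $\cb$, and in fact in $\cb_\al$ after an obvious modification of the exponents) to a limit $(\Laha h)_{ts}$ independent of the chosen $g$; then check $\delha(\Laha h)=h$ and the contraction bound \eqref{contraction-laha}.

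**Key steps.** First I would prove the basic "one-point removal" estimate: if $\pi'$ is obtained from $\pi$ by deleting a single interior point $t_i$, then using $\delha g=h$ one gets $M^{\pi}_{ts}-M^{\pi'}_{ts}=S_{t-t_{i+1}}\, h_{t_{i+1}t_it_{i-1}}$, and the $\cac_3^\mu$-bound on $h$ together with the contraction $\norm{S_\tau}_{\cl(\cb,\cb_\al)}\le c_\al\tau^{-\al}$ (uniform boundedness of the analytic semigroup on the scale $\cb_\al$) controls this by $c_{\al,\mu}(t-t_{i+1})^{-\al}|t_{i+1}-t_{i-1}|^\mu$. Second, choosing at each stage a point $t_i$ for which $|t_{i+1}-t_{i-1}|$ is at most $2(t-s)/n$ (the standard "remove a point with small neighbouring gaps" argument), summing the telescoping differences along a sequence of partitions, and using $\mu>1$ so that $\sum_n n^{-\mu}<\infty$ while keeping track of the factor $(t-t_{i+1})^{-\al}$, one obtains that $(M^\pi_{ts})_\pi$ is Cauchy in $\cb_\al$ for every $\al\in[0,\mu)$ and that the limit $(\Laha h)_{ts}$ satisfies
\[
\norm{(\Laha h)_{ts}-S_{t-t_1}\, g_{t_1 s}}_{\cb_\al}\le c_{\al,\mu}\,\cn[h;\cac_3^\mu(I;\cb)]\,|t-s|^{\mu-\al}
\]
for a "first-point" partition; refining this by also letting $t_1\downarrow s$ gives directly \eqref{contraction-laha}. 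Third, I would verify $\delha(\Laha h)=h$: from the defining sums one checks $(\Laha h)_{ts}-(\Laha h)_{tu}-S_{t-u}(\Laha h)_{us}=g_{ts}-g_{tu}-S_{t-u}g_{us}+(\text{vanishing limit})=h_{tus}$, where the correction term is killed because it is bounded by $|t-s|^\mu$ times a vanishing mesh contribution. Finally, uniqueness: if $z_1,z_2\in\cap_\al\cac_2^{\mu-\al}(I;\cb_\al)$ both have $\delha(z_1-z_2)=0$, then $w:=z_1-z_2$ satisfies $w_{ts}=w_{tu}+S_{t-u}w_{us}$, so iterating along a partition $w_{ts}=\sum_i S_{t-t_{i+1}}w_{t_{i+1}t_i}$, and the $\cac_2^{\mu}$-type bound on $w$ (take $\al=0$) forces $\norm{w_{ts}}\le \cn[w]\,|t-s|^\mu\,\sum_i|t_{i+1}-t_i|^{\mu-1}\to 0$ as $|\pi|\to 0$ since $\mu>1$; hence $w\equiv 0$.

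**Main obstacle.** The delicate point is the bookkeeping of the singular factor $(t-t_{i+1})^{-\al}$ that the semigroup produces when one measures the sums in $\cb_\al$ rather than in $\cb$: the naive "remove a point with smallest gap" estimate gives $\sum_i (t-t_{i+1})^{-\al}|t_{i+1}-t_{i-1}|^\mu$, and one must argue that this stays bounded uniformly in the partition and converges — this is where $\al<\mu$ is used and where care is needed, since the points $t_{i+1}$ closest to $t$ contribute the worst singularity. The clean way is to prove the estimate first in $\cb=\cb_0$ (no singularity, pure sewing lemma), obtaining $\Laha h\in\cac_2^\mu(I;\cb)$, and then \emph{bootstrap} on shrinking intervals: one shows $\norm{(\Laha h)_{ts}}_{\cb_\al}\le \norm{S_{(t-s)/2}}_{\cl(\cb,\cb_\al)}\norm{(\Laha h)_{t,(t+s)/2}}_{\cb}+\norm{(\Laha h)_{(t+s)/2,s}}_{\cb_\al}$ and iterates, turning the one global $\cb_\al$-bound into the family of bounds \eqref{contraction-laha} with the correct exponent $\mu-\al$. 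I expect that the authors either cite \cite{GT} for this verbatim or reproduce precisely this two-stage argument, so the proof will be short — essentially a pointer to the sewing lemma plus the smoothing property of $S$ on the Sobolev scale.
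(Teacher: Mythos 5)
First, a point of reference: the paper does not prove this statement at all --- it is imported verbatim from \cite[Theorem 3.5]{GT} --- so your proposal is being measured against the actual content of the result rather than against an argument in the text. Your overall strategy (a convolutional sewing lemma) is the right one, and the $\al=0$ part of your sketch is essentially sound: the one-point-removal identity is correct (up to sign, $M^{\pi}_{ts}-M^{\pi'}_{ts}=-S_{t-t_{i+1}}h_{t_{i+1}t_it_{i-1}}$), $\mu>1$ gives summability, and the uniqueness argument via the telescopic sum works (though the bound should read $\norm{w_{ts}}\le\cn[w;\cac_2^\mu]\sum_i|t_{i+1}-t_i|^\mu\le\cn[w;\cac_2^\mu]\,|\pi|^{\mu-1}|t-s|$). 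One structural slip: the limit of the Riemann sums $M^\pi$ is the $\delha$-exact part $\Sigma g$ (it equals $\delha y$ for $y_t:=\lim_\pi M^\pi_{t\ell_1}$, hence $\delha(\lim_\pi M^\pi)=0$), so it cannot itself be $\Laha h$; the correct definition is $\Laha h=g-\lim_\pi M^\pi$, consistent with your opening sentence but not with your ``Key steps''.

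The genuine gap is the case $\al>0$, which is precisely the non-trivial ``convolutional'' content of the theorem, and your proposed fix does not close it. The cocycle relation is $(\Laha h)_{ts}=(\Laha h)_{tm}+S_{t-m}(\Laha h)_{ms}+h_{tms}$ with $m=(t+s)/2$: the smoothing factor $S_{t-m}$ acts on the piece over the \emph{earlier} half $[s,m]$, whereas your bootstrap inequality applies it to the later half; and, more seriously, the piece $(\Laha h)_{tm}$ over $[m,t]$ and the remainder $h_{tms}$ receive no smoothing at all. Since $h$ is only assumed $\cb$-valued, iterating towards $t$ accumulates a series of remainders $h_{t,u_{k+1},u_k}$ whose sum converges in $\cb$ but not, on the stated hypotheses, in $\cb_\al$. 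The same obstruction appears in the direct Riemann-sum estimate: the insertion whose right neighbour is $t$ produces $S_0h_{t\cdot\cdot}=h_{t\cdot\cdot}$ with no gain of spatial regularity, and your ``remove the point with small gaps'' bound $\sum_i(t-t_{i+1})^{-\al}|t_{i+1}-t_{i-1}|^\mu$ is not summable uniformly over partitions once the $(t-t_{i+1})^{-\al}$ weights are taken into account. In \cite{GT} this difficulty is resolved through a finer framework (in particular, $\cac_3$-norms built from mixed weights in $|u-s|$ and $|t-u|$ and a careful tracking of which argument of $h$ sits next to the semigroup singularity); as written, your sketch establishes \eqref{contraction-laha} only for $\al=0$ and does not yield $\Laha h\in\cac_2^{\mu-\al}(I;\cb_\al)$ for $\al>0$.
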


\

The proof of this result can be found in \cite[Theorem 3.5]{GT}.

%%%%%%%%%%%%%%%%%%%%%%%%%%%%%%%%%%%%%%%%%%%%%%%%%%%%%%%%%%%%%%%%%%%%%%%%%%%%%%%%%%%%%%%%%%%%%%%%%%%%%%%%%%%%%%%%%%%%%

\subsection{A rough-paths type expansion of the solution}
\label{sec:representation}
We are now ready to settle our reasoning, which applies to a smooth enough vector field $f$:

\begin{hypothesis}\label{hypo-f-2}
The function $f:\R \to \R$ in (\ref{eq-base}) is of class $\mathcal{C}^3$, bounded and with bounded derivatives.
\end{hypothesis}

Our main task will actually consist in establishing the following pathwise decomposition for the solution $Y$ to (\ref{eq-base}):

\begin{theorem}\label{theo:identif}
Assume that both Hypotheses \ref{hypo-noise-3} and \ref{hypo-f-2} hold true.
Fix $\ga \in (\frac{1}{2},\frac{1}{2}+\eta)$ and assume that $\psi \in \cb_\ga$. Then the $\delha$-variations of the solution $Y$ to (\ref{eq-base}) can be expanded as
\begin{equation}\label{representation}
(\delha Y)_{ts}=\int_s^t S_{t-u}(f(Y_u) \circ dW_u)=L^W_{ts}(f(Y_s))+L^{WW}_{ts}(f(Y_s)\cdot f'(Y_s))+\Laha_{ts}\big(R^Y \big),
\end{equation}
where we have set, for all $s<u<t$,
\begin{equation}\label{defi:l-w}
L^W_{ts}(\vp):=\int_s^t S_{t-u}( \vp \cdot dW_u),
\end{equation}
\begin{equation}\label{defi:l-w-w}
L^{WW}_{ts}(\vp):=\int_s^t S_{t-u}\lp \vp \cdot  (\der W)_{us} \cdot dW_u \rp+\int_s^t S_{t-u}\lp \vp \cdot P \rp  du,
\end{equation}
and
\begin{equation}
R^Y_{tus}:=-\delha \big( L^Wf(Y)+L^{WW}\big(f(Y)\cdot f'(Y)\big)\big)_{tus}.
\end{equation}
\end{theorem}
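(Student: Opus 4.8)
The plan is to run the standard fixed-point (Picard-type) scheme in the convolutional rough paths setting, but here the goal is only to \emph{identify} the $\delha$-variations of the already-constructed solution $Y$, so the argument amounts to expanding the integral $\int_s^t S_{t-u}(f(Y_u)\circ dW_u)$ up to order greater than $1$ and checking that the remainder $R^Y$ lies in the domain of $\Laha$. First I would use the Stratonovich decomposition (\ref{decompo-strato}) of Proposition \ref{prop:defi-strato}, applied with $V^2_u=f(Y_u)$, to rewrite
\[
(\delha Y)_{ts}=\int_s^t S_{t-u}(f(Y_u)\cdot dW_u)+\int_s^t S_{t-u}\big(f'(Y_u)\cdot f(Y_u)\cdot P\big)\,du,
\]
so that the problem reduces to a genuine It\^o/Young-type expansion of the right-hand side. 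For the It\^o part, I would write $f(Y_u)=f(Y_s)+[f(Y_u)-f(Y_s)]$ and, using the equation once more, expand $f(Y_u)-f(Y_s)=f'(Y_s)(\der Y)_{us}+\text{(quadratic remainder)}$ and $(\der Y)_{us}=(\delha Y)_{us}+a_{us}Y_s=\int_s^u S_{u-r}(f(Y_r)\cdot dW_r)+(\text{drift})+a_{us}Y_s$, keeping only the leading stochastic term $\int_s^u S_{u-r}(f(Y_r)\cdot dW_r)\approx (\der W)_{us}f(Y_s)$ at first order. Substituting this back produces exactly the two explicit terms $L^W_{ts}(f(Y_s))$ and $L^{WW}_{ts}(f(Y_s)f'(Y_s))$ — the drift $\int_s^t S_{t-u}(f(Y_u)P)\,du$ collapses into the $\int_s^t S_{t-u}(\vp\cdot P)\,du$ piece of (\ref{defi:l-w-w}) after the same first-order substitution — and everything left over is collected into $\Laha_{ts}(R^Y)$, with $R^Y$ defined (as in the statement) by $R^Y_{tus}:=-\delha(L^Wf(Y)+L^{WW}(f(Y)f'(Y)))_{tus}$.

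The crux is then threefold. First, one must check that the candidate remainder term is well-defined, i.e.\ that $R^Y\in\text{Im}\,\delha\cap\cac_3^\mu(\cb)$ for some $\mu>1$, so that Theorem \ref{existence-laha} applies and $\Laha_{ts}(R^Y)$ makes sense. The membership in $\text{Im}\,\delha$ is immediate since $R^Y$ is \emph{by definition} $-\delha$ of a $2$-index process, and $\delha(\delha\cdot)=0$ by Proposition \ref{prop:telesc}(i). The real work is the H\"older estimate: using the algebraic rule (\ref{rel-alg-prod}), $\delha(L^Wf(Y))=(\delha L^W)f(Y)-L^W(\der f(Y))$, and $\delha L^W_{tus}=-L^W_{tu}\big(\cdot\,(\der W)_{us}\big)$ by the semigroup/Chasles structure (Proposition \ref{prop:telesc}(ii)), so the first-order terms in the expansion of $(\der f(Y))_{us}$ cancel \emph{exactly} against the corresponding terms produced by $\delha L^{WW}$ and by the drift — this cancellation is the whole point of the definition of $L^{WW}$, and what remains is genuinely of order $|t-s|^\mu$ with $\mu>1$. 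Concretely this requires bounding, via the smoothing action of the semigroup on the spaces $\cb_{\al,p}$ (the appendix material) together with the bounds on $f,f',f'',f'''$ from Hypothesis \ref{hypo-f-2} and the Hölder regularity $\cn[W;\cac^{\frac12-\ep}(\cb_{\eta,2p})]<\infty$ coming from Hypothesis \ref{hypo-noise-3}, quantities such as $L^W_{tu}(\vp\cdot(\der W)_{us})$, the quadratic remainder $f(Y_u)-f(Y_s)-f'(Y_s)(\der Y)_{us}$, and the discrepancy $(\der W)_{us}f(Y_s)-\int_s^u S_{u-r}(f(Y_r)\cdot dW_r)$; each of these should carry an exponent adding up to something strictly above $1$ once $\ga\in(\tfrac12,\tfrac12+\eta)$ and $\ep$ are chosen appropriately (here one needs a preliminary a priori bound $Y\in\cacha^\ga$, resp.\ $\cac^{\ga}(\cb_\ga)$, for the solution — obtainable by a standard bootstrap from Theorem \ref{thm:wp}).

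Second and more delicate, one must give rigorous meaning to the stochastic objects $L^W$ and $L^{WW}$ themselves as honest processes (not merely formal It\^o integrals evaluated at $\vp=f(Y_s)$), and verify that (\ref{representation}) holds as an identity of $\cb$-valued random variables $P$-a.s. for every $s<t$; this is where one should invoke the $\ep$-semimartingale approximation $Y^\ep$ used in Section \ref{sec:strato}, establish the identity for $Y^\ep$ by a direct It\^o-formula computation, and pass to the limit $\ep\to0$ using (\ref{eq:556}) together with continuity of each term of the expansion in the relevant $L^p(\Omega)$-norms. I expect this passage to the limit, combined with getting the remainder estimate to close with a clean exponent $\mu>1$ uniformly in $\ep$, to be the main obstacle — it is a bookkeeping-heavy argument in which the smoothing exponents of $S_\cdot$, the available time-regularity $\tfrac12-\ep$ of $W$, and the spatial regularity $\eta$ of the noise all have to be balanced simultaneously. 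The third, comparatively minor, point is to record that the decomposition (\ref{representation}) is the unique one of its kind (the explicit terms being prescribed and $\Laha$ being injective on its domain by Theorem \ref{existence-laha}), which is what makes the representation useful for the continuity estimate of Theorem \ref{thm:continuity} in the next subsection.
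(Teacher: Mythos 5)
Your overall strategy is the paper's: pass to the It\^o form (\ref{eq:336}), freeze the integrand at $s$ to extract $L^W_{ts}(f(Y_s))+L^{WW}_{ts}(f(Y_s)\cdot f'(Y_s))$, and handle the leftover by showing that $R^Y\in\cac_3^\mu(\cb)\cap\text{Im}\,\delha$ for some $\mu>1$ so that $\Laha$ applies. But there are concrete problems. First, the algebra is garbled: by the Chasles relation (Proposition \ref{prop:telesc}(ii)) one has $\delha L^W=0$, while the Chen-type identity $(\delha L^{WW})_{tus}(\vp)=L^W_{tu}(\vp\cdot(\der W)_{us})$ belongs to $L^{WW}$, not to $L^W$ as you write. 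The correct bookkeeping gives $R^Y_{tus}=L^W_{tu}N_{us}+L^{WW}_{tu}\,\der(f(Y)\cdot f'(Y))_{us}$ with $N_{us}=\der(f(Y))_{us}-(\der W)_{us}\cdot f(Y_s)\cdot f'(Y_s)$, and since $L^W$ only has regularity $\frac12-\ep$, everything hinges on proving $N\in\cac_2^{\frac12+\ep}(\cb)$. Here your proposed a priori input is off: $Y\in\cac^{\ga}(\cb_\ga)$ with $\ga>\frac12$ time-H\"older is not attainable (the stochastic convolution is at best $\tfrac12-$ in time), and no first-order H\"older bound on $Y$ alone can push $N$ above exponent $\frac12$. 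What the paper proves and uses (Lemma \ref{lem:apriori}) is $Y\in\cacha^{2\eta}(\cb_\infty)\cap\cac^0(\cb_\ga)$ together with the \emph{second-order} control $K^Y:=\delha Y-L^Wf(Y)\in\cac_2^{\frac12+\eta}(\cb)$; one then writes $\der(f(Y))_{us}$ through $K^Y_{us}$, $a_{us}Y_s$ (which is $O(|u-s|^\ga)$ precisely because $\ga>\frac12$ and $\psi\in\cb_\ga$) and $L^{aW}_{us}(f(Y_s))=L^W_{us}(f(Y_s))-(\der W)_{us}f(Y_s)$, the last subtraction being the cancellation that yields the exponent $\frac12+\ep$. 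You mention the relevant discrepancy, but without the $K^Y$ estimate the exponent count does not close.

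Second, ``everything left over is collected into $\Laha_{ts}(R^Y)$'' is not automatic; it is where the proof actually concludes. Writing the leftover explicitly as $J^Y_{ts}=\int_s^tS_{t-u}(P\cdot\der(f(Y)f'(Y))_{us})\,du+\int_s^tS_{t-u}(N_{us}\cdot dW_u)$, one must check that $\delha J^Y=R^Y$ and, separately, that $J^Y\in\cac_2^{\mu}(\cb)$ for some $\mu>1$ (the paper does this via Burkholder--Davis--Gundy estimates combined with the Garsia--Rodemich--Rumsey Lemma \ref{lem-grr}); then $\delha(Z-Y)=\Laha(R^Y)-J^Y$ lies in $\cac_2^\mu(\cb)$ with $\mu>1$ and the telescopic sum of Proposition \ref{prop:telesc}(i) forces it to vanish. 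Relegating this to a ``comparatively minor'' uniqueness remark misplaces where the identification happens. Finally, the detour through the Yosida approximation $Y^\ep$ is unnecessary at this stage: the Stratonovich integral was already reduced to the It\^o equation (\ref{eq:336}) in Section \ref{sec:existence}, and the paper works directly with that equation by moment estimates, with no further $\ep\to0$ limit.
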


\

The theorem must be read as follows: in the expansion of $\int_s^t S_{t-u}(f(Y_u) \circ dW_u)$, we can exhibit a \emph{main} term, namely
$$L^W_{ts}(f(Y_s))+L^{WW}_{ts}(f(Y_s)\cdot f'(Y_s)),$$
and a \emph{residual} term $\Laha_{ts}\big(R^Y \big)$ with Hölder regularity strictly larger than $1$, in the sense of Theorem \ref{existence-laha} (take $\al=0$ in (\ref{contraction-laha})). Besides, from the decomposition (\ref{representation}), we can somehow conclude that the whole dynamics induced by $W$ is "encoded" through the two (stochastic) operator-valued processes $L^W$ and $L^{WW}$. So, before we turn to the proof of (\ref{representation}), let us elaborate on the properties of these two processes.

%\begin{remark}
%In order to clarify the above statement, let us make the following observations concerning the parameters $\eta$ and $\gamma$:
%So, there are (only) two parameters to deal with in this story:
%\begin{itemize}
%\item[(i)] $\eta >0$ is a small parameter given by Lemma \ref{lem:regu-noise-as}, and it is linked to the spatial regularity of the noise.
%For convenience, and without loss of generality, we will henceforth assume that $\eta \in (0,\frac{1}{8})$.
%\item[(ii)] The parameter $\ga$ must satisfy $\frac{1}{2}<\ga < \frac{1}{2}+\eta$. This will indicate the order of the fractional Sobolev space
%in which the solution $Y$ to (\ref{equa-mild-2}) takes values, that is $Y_t \in \cb_\ga$ a.s (see Lemma \ref{lem:apriori}).
%\end{itemize}
%\end{remark}

\subsection{The couple $(L^W,L^{WW})$}

At this point, we consider $L^W_{ts}$ and $L^{WW}_{ts}$ as stochastic linear operators acting on the space of smooth functions $\vp$. The following (straightforward) relation accounts for the algebraic behaviour of the couple $(L^W,L^{WW})$: it is the convolutional analog of the classical Chen's relation between a process and its Lévy area (see \cite{gubi}).

\begin{proposition}
The processes $L^W$ and $L^{WW}$ obey the following algebraic rules: For all $s<u<t$ and all smooth function $\vp$,
\begin{equation}\label{alg-rul}
(\delha L^W)_{tus}(\vp)=0 \quad , \quad (\delha L^{WW})_{tus}(\vp)=L^W_{tu}(\vp \cdot (\der W)_{us} ).
\end{equation}
\end{proposition}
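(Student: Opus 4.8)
\emph{Proof strategy.} The whole identity is algebraic, and rests on two facts: the semigroup property $S_{t-u}S_{u-r}=S_{t-r}$, and the additivity of the (It\^o) stochastic integral and of the Lebesgue integral over the splitting $\int_s^t=\int_s^u+\int_u^t$. The first fact is already packaged in the Chasles relation of Proposition \ref{prop:telesc}(ii). For the first rule, observe that for a \emph{fixed} smooth function $\vp$ the process $(t,s)\mapsto L^W_{ts}(\vp)=\int_s^t S_{t-r}(\vp\cdot dW_r)$ is exactly of the form $\cj_{ts}=\int_s^t S_{t-r}(y_r\cdot dW_r)$ with the constant integrand $y_r\equiv\vp$; hence Proposition \ref{prop:telesc}(ii) gives at once $(\delha L^W)_{tus}(\vp)=0$. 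Equivalently, one expands $(\delha L^W)_{tus}(\vp)=L^W_{ts}(\vp)-L^W_{tu}(\vp)-S_{t-u}L^W_{us}(\vp)$, pushes $S_{t-u}$ inside the integral defining $L^W_{us}(\vp)$, uses $S_{t-u}S_{u-r}=S_{t-r}$ to get $S_{t-u}L^W_{us}(\vp)=\int_s^u S_{t-r}(\vp\cdot dW_r)$, and closes the identity by $\int_s^u+\int_u^t=\int_s^t$.

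For the second rule, decompose $L^{WW}=A+B$ according to (\ref{defi:l-w-w}), where $B_{ts}(\vp)=\int_s^t S_{t-r}(\vp\cdot P)\,dr$ is again of Chasles type, so the same one-line argument yields $(\delha B)_{tus}(\vp)=0$. It remains to treat the genuinely quadratic term $A_{ts}(\vp)=\int_s^t S_{t-r}\big(\vp\cdot(\der W)_{rs}\cdot dW_r\big)$. Split the integral at $u$: after pushing $S_{t-u}$ in and out via the semigroup property, the piece $\int_s^u$ equals $S_{t-u}A_{us}(\vp)$. In the piece $\int_u^t$, apply the ordinary (one-variable) Chasles relation for increments, $(\der W)_{rs}=(\der W)_{ru}+(\der W)_{us}$ for $s\le u\le r$, to split
\[
\int_u^t S_{t-r}\big(\vp\cdot(\der W)_{rs}\cdot dW_r\big)=\int_u^t S_{t-r}\big(\vp\cdot(\der W)_{ru}\cdot dW_r\big)+\int_u^t S_{t-r}\big(\vp\cdot(\der W)_{us}\cdot dW_r\big)=A_{tu}(\vp)+L^W_{tu}\big(\vp\cdot(\der W)_{us}\big),
\]
where in the last integral $(\der W)_{us}=W_u-W_s$ is constant in the integration variable $r$, so that by definition (\ref{defi:l-w}) this term is precisely $L^W_{tu}$ applied to the function $\vp\cdot(\der W)_{us}$. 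Collecting the pieces gives $A_{ts}(\vp)=S_{t-u}A_{us}(\vp)+A_{tu}(\vp)+L^W_{tu}(\vp\cdot(\der W)_{us})$, i.e.\ $(\delha A)_{tus}(\vp)=L^W_{tu}(\vp\cdot(\der W)_{us})$, and adding $(\delha B)_{tus}(\vp)=0$ yields the announced relation.

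The only point deserving a word of care is the commutation $S_{t-u}\int_s^u S_{u-r}(g_r\cdot dW_r)=\int_s^u S_{t-r}(g_r\cdot dW_r)$: since $S_{t-u}$ is a fixed bounded operator it commutes with the stochastic integral, and then the semigroup property does the rest; the analogous remark applies to the Lebesgue integral in $B$. Beyond this there is no real obstacle --- the statement is purely combinatorial once the integrals are split correctly, and its content is exactly that twisting the quadratic object $L^{WW}$ by $\delha$ regenerates a first-order term built on $L^W$, which is the convolutional analogue of Chen's relation.
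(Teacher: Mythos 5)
Your argument is correct and is exactly the intended one: the paper omits the proof as ``straightforward,'' and your splitting $\int_s^t=\int_s^u+\int_u^t$ combined with the semigroup identity $S_{t-u}S_{u-r}=S_{t-r}$ and the decomposition $(\der W)_{rs}=(\der W)_{ru}+(\der W)_{us}$ is precisely the convolutional Chen computation the authors have in mind. Nothing is missing.
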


Now, it matters to identify the regularity properties of $L^W$ and $L^{WW}$ as $2$-variables processes. A first clue in this direction is given by the following (a.s.) regularity result for the noise $W$ itself.

\begin{lemma}\label{lem:regu-noise-as}
Under Hypothesis \ref{hypo-noise-3}, one has (a.s.) $W\in \cac^{\frac{1}{2}-\ep}(\cb_{\eta,2p})$ for every integer $p\geq 1$ and every small $\ep>0$.
\end{lemma}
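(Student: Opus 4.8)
The strategy is a standard Kolmogorov-type argument applied to the $\cb_{\eta,2p}$-valued process $W$, combined with the explicit series representation \eqref{repr-sum}. First I would fix $p\geq 1$ and estimate, for $0\leq s<t\leq T$, the $2p$-th moment $\expect\big[\norm{(\der W)_{ts}}_{\cb_{\eta,2p}}^{2p}\big]$. Writing $(\der W)_{ts}=\sum_{k\geq 1}\sqrt{\la_k}\,(\be^k_t-\be^k_s)\,e_k$, one has $(-\Delta)^{\eta}(\der W)_{ts}=\sum_{k\geq 1}(k\pi)^{2\eta}\sqrt{\la_k}\,(\be^k_t-\be^k_s)\,e_k$. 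The key step is to bound the $L^{2p}(0,1)$-norm of such a random Fourier-sine series: by the Sobolev/Besov embedding results recalled in the appendix (or, more elementarily, by a Gaussian hypercontractivity argument together with the $L^{2p}$–$L^2$ comparison for Wiener chaos), one obtains
\begin{equation*}
\expect\big[\norm{(\der W)_{ts}}_{\cb_{\eta,2p}}^{2p}\big]
\leq C_p\,\Big(\sum_{k\geq 1}\la_k\,k^{4\eta}\,\expect[(\be^k_t-\be^k_s)^2]\Big)^{p}
= C_p\,\Big(\sum_{k\geq 1}\la_k\,k^{4\eta}\Big)^{p}\,\abs{t-s}^{p},
\end{equation*}
where the finiteness of $\sum_{k\geq 1}\la_k\,k^{4\eta}$ is exactly Hypothesis \ref{hypo-noise-3}.

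\textbf{Conclusion via Kolmogorov.} Having the moment bound $\expect[\norm{(\der W)_{ts}}_{\cb_{\eta,2p}}^{2p}]\leq C_p\,\abs{t-s}^{p}$, the Garsia–Rodemich–Rumsey / Kolmogorov continuity theorem (in the Banach-space-valued form, see e.g. \cite{DaPrato-Zabczyk}) yields that $W$ has a version whose paths are $\al$-Hölder continuous with values in $\cb_{\eta,2p}$ for every $\al<\frac{p-1}{2p}$. Since $p$ is arbitrary, letting $p\to\infty$ shows that, on a single event of full probability (intersect over $p\in\N$), $W\in\cac^{\frac12-\ep}(\cb_{\eta,2p})$ for every $p\geq 1$ and every $\ep>0$, which is the claim. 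The exponent $\frac12-\ep$ is the expected one, reflecting the Brownian scaling of each coordinate $\be^k$; no smoothing from the heat semigroup is used here, only the regularisation in space carried by the fast decay of the $\la_k$.

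\textbf{Main obstacle.} The only non-routine point is the control of the $L^{2p}(0,1)$-norm of the random sine series $\sum_k (k\pi)^{2\eta}\sqrt{\la_k}(\be^k_t-\be^k_s)e_k$ by its $L^2$-norm (equivalently, the identification of the fractional Sobolev norm $\norm{\cdot}_{\cb_{\eta,2p}}$ with a quantity computable from the Fourier coefficients). This is where the appendix material on the spaces $\cb_{\al,p}$ — specifically the characterisation of $\cb_{\al,p}=\cd((-\Delta)^\al$ on $L^p)$ and the equivalence with Besov-type norms — is invoked, and where the restriction $\eta\in(0,\tfrac18)$ plays its role in keeping the relevant embeddings valid. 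Everything else is a direct application of Hypothesis \ref{hypo-noise-3} and the Kolmogorov criterion.
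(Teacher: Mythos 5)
Your moment estimate is exactly the paper's: the bound $E\big[\norm{(\der W)_{ts}}_{\cb_{\eta,2p}}^{2p}\big]\le C_p\,|t-s|^p$ is Proposition \ref{prop:conver} applied to $X_k=(\der \be^k)_{ts}$ (the paper proves it by precisely the pointwise-in-$\xi$ Rosenthal/hypercontractivity bound you describe, followed by integration over $\xi$), and the conclusion is then drawn from the Garsia--Rodemich--Rumsey Lemma \ref{lem-grr}. So the overall route is the same.

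The one place where your exponent bookkeeping does not close as written is the final step. For a \emph{fixed} $p$, a moment bound of order exactly $2p$ with right-hand side $|t-s|^{p}$ only yields H\"older exponent up to $\frac{p-1}{2p}=\frac12-\frac{1}{2p}$ for the $\cb_{\eta,2p}$-valued paths; this is not $\frac12-\ep$ for every $\ep$, and for $p=1$ it gives nothing at all. ``Letting $p\to\infty$'' changes the target space together with the exponent, so the claim for a fixed small $p$ does not follow from your fixed-$p$ estimate alone. Two standard repairs: (i) the paper's, namely estimate moments of order $2pq$ for arbitrary $q\ge 1$ --- this is exactly why Proposition \ref{prop:conver} carries the extra parameter $q$ --- obtaining $E\big[\norm{(\der W)_{ts}}_{\cb_{\eta,2p}}^{2pq}\big]\le C_{p,q}|t-s|^{pq}$ and hence exponent $\frac12-\frac{1}{pq}$ in the \emph{fixed} space $\cb_{\eta,2p}$, then let $q\to\infty$; or (ii) yours made explicit: observe that $\cb_{\eta,2p'}\subset\cb_{\eta,2p}$ for $p'\ge p$ (immediate from $L^{2p'}(0,1)\subset L^{2p}(0,1)$ and the definition of $\cb_{\al,p}$ via $(-\Delta)^{\al}$ on $L^p$), so the regularity obtained in $\cb_{\eta,2p'}$ for large $p'$ transfers to $\cb_{\eta,2p}$. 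Either one-line addition completes your argument; everything else matches the paper's proof.
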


\begin{proof}
By using our forthcoming Proposition \ref{prop:conver}, we deduce that
$$E\big[\|(\der W)_{ts}\|_{\cb_{\eta,2p}}^{2pq} \big] \leq C_{p,q} E\big[|(\der \be)_{ts}|^{2pq} \big] \leq C_{p,q} \lln t-s \rrn^{pq},$$
for all $q\geq 1$, and the result is now a straightforward consequence of the Garsia-Rodemich-Rumsey Lemma \ref{lem-grr} (take $\der^\ast=\der$ and $R=\der W$ in the latter statement).
\end{proof}

Our second ingredient towards the regularity properties of $(L^W,L^{WW})$ relies on two successive observations. First, due to their relative simplicity, the two expressions (\ref{defi:l-w})-(\ref{defi:l-w-w}) can be integrated by parts. Then, owing to the some obvious commuting properties, we can turn $L^{WW}$ into an easy-to-handle functional of $\der W$. This is what we propose to detail in the proof of the following Lemma.

\begin{lemma}\label{lem:ipp}
For every smooth function $\vp$ and all $s<t\in [0,T]$, the following formulas hold true (a.s.):
\begin{equation}\label{ipp-l-w}
L^W_{ts}(\vp)=S_{t-s}(\vp \cdot (\der W)_{ts})-\int_s^t \Delta S_{t-u}(\vp \cdot (\der W)_{tu} ) \, du,
\end{equation}
\begin{equation}\label{ipp-l-w-w}
L^{WW}_{ts}(\vp)=
\frac{1}{2} \lcl S_{t-s}(\vp \cdot (\der W)_{ts}^2 )-\int_s^t \Delta S_{t-u}\lp\vp \cdot  \lc (\der W)_{tu}^2+2(\der W)_{tu}\cdot (\der W)_{us} \rc  \rp \, du \rcl.
\end{equation}
\end{lemma}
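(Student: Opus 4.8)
The proof will be a direct integration-by-parts computation on the two convolution integrals $L^W$ and $L^{WW}$, exploiting the fact that the integrands $\vp$ and the stochastic data $dW$ commute (they act on disjoint "slots" in the tensor sense: $\vp$ is a fixed function, $W$ lives in the Hilbert space), so that the usual chain rule / product rule of the Young--It\^o type can be applied pathwise coordinate-by-coordinate. The starting point is the elementary identity $S_{t-u} = S_{t-s} + \int_s^u \Delta S_{t-v}\,dv$ (equivalently $\frac{d}{du}S_{t-u} = \Delta S_{t-u}$), which lets us rewrite the "frozen-kernel" integral $\int_s^t S_{t-u}(\vp\cdot dW_u)$ in terms of $(\der W)_{us}$ and $(\der W)_{tu}$.

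First I would treat \eqref{ipp-l-w}. Write $L^W_{ts}(\vp) = \int_s^t S_{t-u}(\vp\cdot dW_u)$ and integrate by parts in the (Riemann--Stieltjes, or rather It\^o, but the cross-variation vanishes since $S_{t-u}$ has bounded variation in $u$) sense:
\[
\int_s^t S_{t-u}(\vp\cdot dW_u) = \Big[ S_{t-u}(\vp\cdot (\der W)_{us})\Big]_{u=s}^{u=t} - \int_s^t \partial_u S_{t-u}\big(\vp\cdot (\der W)_{us}\big)\,du,
\]
using $(\der W)_{ss}=0$ and $\partial_u S_{t-u} = \Delta S_{t-u}$ (with the sign coming from differentiating $t-u$). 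The boundary term at $u=t$ is $S_0(\vp\cdot(\der W)_{ts}) = \vp\cdot(\der W)_{ts}$; wait, $S_0 = \id$, so this gives $\vp\cdot(\der W)_{ts}$ rather than $S_{t-s}(\vp\cdot(\der W)_{ts})$. To land on the stated form one reorganizes using $(\der W)_{us} = (\der W)_{ts} - (\der W)_{tu}$: the boundary term becomes $S_{t-s}(\vp\cdot(\der W)_{ts})$ after absorbing the piece $\int_s^t \Delta S_{t-u}(\vp\cdot(\der W)_{ts})\,du = [S_{t-s} - \id](\vp\cdot(\der W)_{ts})$ from the integral term, which exactly converts the naked $\vp\cdot(\der W)_{ts}$ into $S_{t-s}(\vp\cdot(\der W)_{ts})$ and leaves $-\int_s^t \Delta S_{t-u}(\vp\cdot(\der W)_{tu})\,du$. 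That yields \eqref{ipp-l-w}.

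For \eqref{ipp-l-w-w} I would apply the same scheme twice. By definition \eqref{defi:l-w-w},
\[
L^{WW}_{ts}(\vp) = \int_s^t S_{t-u}\big(\vp\cdot(\der W)_{us}\cdot dW_u\big) + \int_s^t S_{t-u}(\vp\cdot P)\,du.
\]
Integrate the first integral by parts, this time with the integrand already carrying a factor $(\der W)_{us}$; the It\^o correction is now genuinely present and produces $\int_s^t S_{t-u}(\vp\cdot d\langle W\rangle_u)$, which is precisely $\int_s^t S_{t-u}(\vp\cdot P)\,du$ with the convention $d\langle W\rangle_u = 2P\,du$ on the diagonal — wait, more carefully, $d[(\der W)_{us}^2] = 2(\der W)_{us}\,dW_u + d\langle W\rangle_u$ and $d\langle W\rangle_u = 2P\,du$ given $P = \tfrac12\sum\la_k e_k^2$, so $\int_s^t S_{t-u}(\vp\cdot d[(\der W)_{us}^2]) = 2\int_s^t S_{t-u}(\vp\cdot(\der W)_{us}\cdot dW_u) + 2\int_s^t S_{t-u}(\vp\cdot P)\,du$. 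Hence the bracketed sum equals $\tfrac12\int_s^t S_{t-u}(\vp\cdot d[(\der W)_{us}^2])$, and now a plain (no-correction) integration by parts gives $\tfrac12\{[S_{t-u}(\vp\cdot(\der W)_{us}^2)]_{u=s}^{u=t} - \int_s^t \Delta S_{t-u}(\vp\cdot(\der W)_{us}^2)\,du\}$. Finally expand $(\der W)_{us}^2 = ((\der W)_{ts} - (\der W)_{tu})^2 = (\der W)_{ts}^2 - 2(\der W)_{ts}(\der W)_{tu} + (\der W)_{tu}^2$ inside both the boundary term and the integral, and absorb $\int_s^t \Delta S_{t-u}(\vp\cdot[(\der W)_{ts}^2 - 2(\der W)_{ts}(\der W)_{tu}])\,du$ against the boundary term exactly as in the $L^W$ case, arriving at \eqref{ipp-l-w-w}.

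\textbf{Main obstacle.} The computation itself is routine once the framework is fixed; the real care is in justifying the integration-by-parts / It\^o formula in the infinite-dimensional setting, i.e.\ that $u\mapsto S_{t-u}(\vp\cdot(\der W)_{us})$ is a genuine $\cb$-valued (or better, $\cb_{\eta,2p}$-valued) semimartingale to which the product rule applies, and that all the integrals appearing (in particular $\int_s^t \Delta S_{t-u}(\cdots)\,du$, which involves the \emph{unbounded} operator $\Delta$) are well-defined. This is where the smoothing of the semigroup and Hypothesis~\ref{hypo-noise-3} enter: $\Delta S_{t-u}$ gains back roughly $(t-u)^{-1}$ in operator norm from $\cb_{\eta,2p}$ to $\cb$-type spaces, which is integrable against the $(t-u)^{-1+\kappa}$-type singularities, and $W$ having a.s.\ paths in $\cac^{1/2-\ep}(\cb_{\eta,2p})$ (Lemma~\ref{lem:regu-noise-as}) makes $(\der W)_{tu}$ and its square small enough near the diagonal. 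One should state these convergences as $L^2(\Omega)$ (or a.s.) limits of the corresponding Riemann--It\^o sums and pass to the limit, the bounds being uniform thanks to the trace-class assumption $\sum_k\la_k k^{4\eta}<\infty$.
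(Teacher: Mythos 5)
Your proposal is correct and follows essentially the same route as the paper: both proofs are applications of It\^o's formula (the correction term producing exactly $2P\,du=\sum_k\la_k e_k^2\,du$), followed by the algebraic identity $(\der W)_{ts}^2-(\der W)_{us}^2=(\der W)_{tu}^2+2(\der W)_{tu}\cdot(\der W)_{us}$ and the absorption $\int_s^t\Delta S_{t-u}\,du=S_{t-s}-\id$; the paper merely makes the rigorous justification explicit by truncating to $W^N=\sum_{i\leq N}\sqrt{\la_i}\,\be^i e_i$ and applying finite-dimensional It\^o calculus to $F^{i,j}_{s,t}(u,\be^i_u,\be^j_u)$ before passing to the $L^2(\Omega;\cb)$ limit, which is exactly the approximation step you defer to your final paragraph. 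One small point of bookkeeping: with the paper's conventions one has $\partial_u S_{t-u}=-\Delta S_{t-u}$ (not $+\Delta S_{t-u}$ as written in your first step), which is what makes your later identity $\int_s^t\Delta S_{t-u}\,du=S_{t-s}-\id$ and the minus sign in front of the $\Delta S_{t-u}$-integrals in (\ref{ipp-l-w}) and (\ref{ipp-l-w-w}) come out consistently.
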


\begin{proof}
With the expansion (\ref{repr-sum}) in mind, it is easily checked, by setting $W^N_t:=\sum_{i=1}^N \sqrt{\la_i} \be^i_t e_i$, that $L^W_{ts}(\vp)=\lim_{N\to \infty} \sum_{i=1}^N \int_s^t S_{t-u}(\vp \cdot dW^N_u)$ and
$$\int_s^t S_{t-u}(\vp \cdot (\der W)_{us} \cdot dW_u) =\lim_{N\to \infty} \int_s^t S_{t-u}(\vp \cdot (\der W^N)_{us} \cdot dW^N_u),$$
where the limits are taken in $L^2(\Omega,\cb)$. The proof then reduces to applications of Itô's formula and we only elaborate on (\ref{ipp-l-w-w}). For fixed $i,j\in \{1,\ldots,N\}$, apply Itô's formula to the (random) function $F^{i,j}_{s,t}:[s,t] \times \R \times \R \to \cb$ defined by
$$F^{i,j}_{s,t}(u,x,y):=S_{t-u}(\vp \cdot e_i \cdot e_j) \big[ (x-\be^i_s)(y-\be^j_s)-(\der \be^i)_{ts} (\der \be^j)_{ts} \big]$$
so as to deduce
\begin{multline*}
0=F^{i,j}_{s,t}(t,\be^i_t,\be^j_t)=F^{i,j}_{s,t}(s,\be^i_s,\be^j_s)-\int_s^t \Delta S_{t-u}(\vp \cdot e_i \cdot e_j)\big[ (\der \be^i)_{us}(\der \be^j)_{us}-(\der \be^i)_{ts}(\der \be^j)_{ts} \big] \, du\\
+\int_s^t S_{t-u}(\vp \cdot e_i \cdot e_j) (\der \be^i)_{us} \, d\be^j_u+\int_s^t S_{t-u}(\vp \cdot e_i \cdot e_j) (\der \be^j)_{us} \, d\be^i_u+\mathbf{1}_{\{i=j\}} \int_s^t S_{t-u}(\vp \cdot e_i \cdot e_j) \, du.
\end{multline*}
By taking the sum over $i,j$, we deduce the formula
\begin{multline*}
S_{t-s}(\vp \cdot (\der W^N)_{ts}^2)-\int_s^t \Delta S_{t-u}\big( \vp \cdot [(\der W^N)_{ts}^2-(\der W^N)_{us}^2] \big) \, du\\
=2\int_s^t S_{t-u}(\vp \cdot (\der W^N)_{us} \cdot dW^N_u)+\int_s^t S_{t-u} \big( \vp \cdot \big( \sum_{i=1}^N \la_i e_i^2 \big)\big)
\end{multline*}
and by passing to the limit (in $L^2(\Omega,\cb)$), we get
$$L^{WW}_{ts}(\vp)=\frac{1}{2} \bigg\{ S_{t-s}(\vp \cdot (\der W)_{ts}^2)+\int_s^t \Delta S_{t-u}\big( \vp \cdot [(\der W)_{ts}^2-(\der W)_{us}^2]\big) \, du\bigg\}.$$
Formula (\ref{ipp-l-w-w}) immediately follows.

\end{proof}

We are now in a position to extend both $L^W_{ts}$ and $L^{WW}_{ts}$ to larger classes of functions $\vp$ and retrieve the following (a.s.) bounds, which will be at the core of our identification procedure:

\begin{proposition}\label{prop:cont-op}
Under the hypotheses of Theorem \ref{theo:identif}, for any small $\ep >0$, there exists $\tilde{\ep}>0$ and $p\geq 1$ such that (almost surely)
\begin{multline}\label{control-l-w}
\cn[L^W;\cac_2^{\frac{1}{2}-\ep}(\cl(\cb,\cb))]+\cn[L^W;\cac_2^{(\frac{1}{2}-\ga+\eta)-\ep}(\cl(\cb_{1/2},\cb_\ga))]+\cn[L^W;\cac_2^{\frac{1}{4}-\ep}(\cl(\cb_\infty,\cb_\infty))]\\
\leq c_{\ep,\tilde{\ep},p} \, \cn[W;\cac^{\frac{1}{2}-\tilde{\ep}}(\cb_{\eta,2p})],
\end{multline}
\begin{multline}\label{control-l-w-w}
\cn[L^{WW};\cac_2^{1-\ep}(\cl(\cb,\cb))]+\cn[L^{WW};\cac_2^{(1-\ga+\eta)-\ep}(\cl(\cb_{1/2},\cb_\ga))]+\cn[L^{WW};\cac_2^{\frac{3}{4}-\ep}(\cl(\cb_\infty,\cb_\infty))]\\
\leq c_{\ep,\tilde{\ep},p} \cn[W;\cac^{\frac{1}{2}-\tilde{\ep}}(\cb_{\eta,2p})]^2,
\end{multline}
for some constant $c_{\ep,\tilde{\ep},p}$.
\end{proposition}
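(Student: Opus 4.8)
The starting point is the integrated-by-parts formulas \eqref{ipp-l-w}--\eqref{ipp-l-w-w}, which express $L^W_{ts}(\vp)$ and $L^{WW}_{ts}(\vp)$ purely in terms of $\vp$ and the increments $(\der W)_{us}$ of the noise (no stochastic integral left). Once this is granted, the estimates \eqref{control-l-w}--\eqref{control-l-w-w} become a purely analytic matter, namely controlling the two building blocks
\begin{equation*}
S_{t-s}(\vp\cdot(\der W)_{ts}^{\otimes}) \quad\text{and}\quad \int_s^t \Delta S_{t-u}(\vp\cdot(\der W)_{\bullet}^{\otimes})\,du,
\end{equation*}
in the appropriate operator norms. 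I would therefore first record the two analytic ingredients I need from the appendix on fractional Sobolev spaces: (a) the multiplication/Sobolev-type bound $\norm{\vp\cdot h}_{\cb_{\al}} \lesssim \norm{\vp}_{\cb_\al}\norm{h}_{\cb_{\eta,2p}}$ (and its $L^\infty$ analog $\norm{\vp\cdot h}_{\cb_\infty}\lesssim\norm{\vp}_{\cb_\infty}\norm{h}_{\cb_\infty}$, using that $\cb_{\eta,2p}\hookrightarrow\cb_\infty$ for $p$ large enough once $2\eta p > 1$), and (b) the smoothing estimate for the semigroup, $\norm{\Delta S_\tau}_{\cl(\cb_\beta,\cb_\al)}\lesssim \tau^{-(1+\al-\beta)}$ for $\al\geq\beta$, together with $\norm{S_\tau}_{\cl(\cb_\beta,\cb_\al)}\lesssim\tau^{-(\al-\beta)}$. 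These are exactly the kinds of facts collected for the appendix and quoted elsewhere in the paper.

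\textbf{Estimating $L^W$.} For the first term $\cn[L^W;\cac_2^{1/2-\ep}(\cl(\cb,\cb))]$: from \eqref{ipp-l-w}, $\norm{S_{t-s}(\vp\cdot(\der W)_{ts})}_\cb \lesssim \norm{\vp}_\cb\,\norm{(\der W)_{ts}}_{\cb_{\eta,2p}} \lesssim \norm{\vp}_\cb\,\cn[W;\cac^{1/2-\tilde\ep}(\cb_{\eta,2p})]\abs{t-s}^{1/2-\tilde\ep}$, and for the integral term
\begin{equation*}
\int_s^t \norm{\Delta S_{t-u}}_{\cl(\cb_{\eta'},\cb)}\,\norm{\vp\cdot(\der W)_{tu}}_{\cb_{\eta'}}\,du \lesssim \cn[W;\cac^{1/2-\tilde\ep}(\cb_{\eta,2p})]\,\norm{\vp}_\cb\int_s^t (t-u)^{-1+\eta'}\abs{t-u}^{1/2-\tilde\ep}\,du,
\end{equation*}
which is finite and of order $\abs{t-s}^{\eta'+1/2-\tilde\ep}$ as soon as $0<\eta'<\eta$; taking $\eta'$ close to $\eta$ and $\tilde\ep$ small gives exponent $>1/2-\ep$, hence the claimed bound (in fact with a slightly better exponent $1/2-\ep+\eta'$, which is consistent with there being no Lévy-area obstruction, cf.\ Remark~\ref{rk:commut}). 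The second summand, $\cn[L^W;\cac_2^{(1/2-\ga+\eta)-\ep}(\cl(\cb_{1/2},\cb_\ga))]$, is handled identically but now reading \eqref{ipp-l-w} as a map $\cb_{1/2}\to\cb_\ga$: the boundary term costs $\norm{S_{t-s}}_{\cl(\cb_{1/2},\cb_\ga)}\lesssim\abs{t-s}^{-(\ga-1/2)}$ and the integral term $\int_s^t(t-u)^{-(1+\ga-\eta')}\abs{t-u}^{1/2-\tilde\ep}du$, both of which produce the time exponent $\tfrac12-\ga+\eta'-\tilde\ep$; since $\ga<\tfrac12+\eta$ the bracket $\tfrac12-\ga+\eta>0$ and $\eta'$ close to $\eta$, $\tilde\ep$ small, deliver exponent $>(\tfrac12-\ga+\eta)-\ep$. (One must check that $\vp\in\cb_{1/2}$ multiplied by $(\der W)_{tu}\in\cb_{\eta,2p}\hookrightarrow\cb_\infty$ still lies in $\cb_{\eta'}$ for $\eta'<\eta$; this is the Sobolev product estimate (a) again, in the form $\norm{\vp\cdot h}_{\cb_{\eta'}}\lesssim\norm{\vp}_{\cb_{1/2}}\norm{h}_{\cb_{\eta,2p}}$ with $p$ large enough, which needs $1/2\geq\eta'$ — true since $\eta<1/8$.) The third summand, $\cn[L^W;\cac_2^{1/4-\ep}(\cl(\cb_\infty,\cb_\infty))]$, uses $\norm{\Delta S_\tau}_{\cl(\cb_\infty,\cb_\infty)}$-type bounds, or more honestly the $\cb_\infty$-smoothing estimate $\norm{\Delta S_\tau}_{\cl(\cb_{\beta,\infty},\cb_\infty)}\lesssim\tau^{-(1+\beta)}$ with the embedding $\cb_{\eta,2p}\hookrightarrow\cb_{\beta,\infty}$ for suitable $\beta<\eta$ — the loss of $1/4$ (instead of $1/2+\eta$) is just the worse smoothing available on $\cb_\infty$; again a routine integration.

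\textbf{Estimating $L^{WW}$ and assembling.} Each term in \eqref{ipp-l-w-w} is quadratic in $\der W$: $(\der W)_{ts}^2$ and $(\der W)_{tu}(\der W)_{us}$ are estimated by $\cn[W;\cac^{1/2-\tilde\ep}(\cb_{\eta,2p})]^2\abs{t-s}^{1-2\tilde\ep}$ (using the product estimate once more to keep these in $\cb_{\eta',2p}$ or $\cb_\infty$), so that repeating verbatim the three computations above — now with an extra power of the time increment — yields the three summands of \eqref{control-l-w-w} with exponents $1-\ep$, $(1-\ga+\eta)-\ep$, $3/4-\ep$ respectively, and the square of the noise norm on the right-hand side. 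Finally I would collect the various constraints on the auxiliary parameters: pick $p$ with $2\eta p>1$ (so that $\cb_{\eta,2p}\hookrightarrow\cb_\infty$ and the product estimates hold), pick $\eta'\in(0,\eta)$ close to $\eta$, and pick $\tilde\ep>0$ small enough that $\eta'-\tilde\ep>\eta-\ep$ (for the first block), $\eta'-\tilde\ep>\eta-\ep$ again (for the second block), and analogously for the $\cb_\infty$ estimates; this fixes $\tilde\ep$ and $p$ in terms of $\ep,\ga,\eta$ and completes the proof.

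\textbf{Main obstacle.} The genuinely delicate points are not the time-singularity integrations (those are the usual analytic-semigroup bookkeeping) but rather (i) making the product estimates work in the right spaces — in particular verifying that multiplication by $(\der W)_{tu}$, which a priori only lives in $\cb_{\eta,2p}$, maps $\cb_{1/2}$ into some $\cb_{\eta'}$ with $\eta'<\eta$ close to $\eta$, and maps $\cb_\infty$ to $\cb_{\beta,\infty}$; this is where the precise fractional-Sobolev multiplication theorems recalled in the appendix are essential and where the smallness restriction $\eta\in(0,\tfrac18)$ and the freedom to enlarge $p$ are actually used — and (ii) the passage, already carried out in Lemma~\ref{lem:ipp}, that eliminates the stochastic integral; without the integrated-by-parts representation one would be forced to estimate $L^{WW}$ through a stochastic-integral isometry and a Garsia--Rodemich--Rumsey argument, which is doable but considerably heavier. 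Granting Lemma~\ref{lem:ipp} and the appendix lemmas, the proof is a finite bookkeeping of semigroup and Sobolev estimates.
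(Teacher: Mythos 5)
Your overall route is the paper's: start from the integration-by-parts formulas (\ref{ipp-l-w})--(\ref{ipp-l-w-w}) of Lemma \ref{lem:ipp}, use the almost sure bound $\norm{(\der W)_{ts}}_{\cb_{\eta,2p}}\lesssim |t-s|^{\frac12-\tilde\ep}$ from Lemma \ref{lem:regu-noise-as}, and reduce everything to fractional-Sobolev multiplication estimates combined with analytic-semigroup smoothing. Your treatment of the $\cl(\cb_{1/2},\cb_\ga)$ and $\cl(\cb_\infty,\cb_\infty)$ blocks is essentially the paper's argument.

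There is, however, a genuine flaw in your treatment of the $\cl(\cb,\cb)$ norms. You estimate the integral term of (\ref{ipp-l-w}) via $\norm{\Delta S_{t-u}}_{\cl(\cb_{\eta'},\cb)}\cdot\norm{\vp\cdot(\der W)_{tu}}_{\cb_{\eta'}}$ with $\eta'>0$, which presupposes the product bound $\norm{\vp\cdot(\der W)_{tu}}_{\cb_{\eta'}}\lesssim\norm{\vp}_{\cb}\,\norm{(\der W)_{tu}}_{\cb_{\eta,2p}}$. This is false: $\vp$ is only an $L^2$ function, and multiplying it by a more regular function cannot place the product in a positive-order space $\cb_{\eta'}$; no multiplication theorem in the appendix gives this. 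Your announced ``slightly better exponent $\frac12-\ep+\eta'$'' is the symptom of the error --- the correct exponent here is strictly \emph{below} $\frac12$, which is precisely why the statement reads $\frac12-\ep$. The paper goes the opposite way: by H\"older's inequality and (\ref{sobolev-inclusion}), one has $\norm{\vp\cdot(\der W)_{tu}}_{\cb_{-\al}}\leq c_{\al,p}\norm{(\der W)_{tu}}_{L^{2p}(0,1)}\norm{\vp}_{\cb}$ for any $\al\geq\frac{1}{4p}$, i.e.\ the product sits in a space of \emph{negative} order that can be pushed arbitrarily close to $0$ by enlarging $p$; then $\norm{\Delta S_{t-u}}_{\cl(\cb_{-\al},\cb)}\lesssim (t-u)^{-1-\al}$ and the time integral yields $|t-s|^{\frac12-\al-\tilde\ep}$, whence $\frac12-\ep$ after choosing $\al$ and $\tilde\ep$ small. (Alternatively, one could keep the product in $\cb$ via the embedding $\cb_{\eta,2p}\hookrightarrow\cb_\infty$ for $4\eta p>1$ and pay the full $(t-u)^{-1}$ for $\Delta S_{t-u}$, which still gives an integrable singularity and the exponent $\frac12-\tilde\ep$.) So the stated bounds are recoverable, but the step as you wrote it would fail, and the same correction is needed in your derivation of the $\cac_2^{1-\ep}(\cl(\cb,\cb))$ estimate for $L^{WW}$.
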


Note here how important the assumption $\ga < \frac12+\eta$ in Theorem \ref{theo:identif} to ensure that $(\frac12-\ga+\eta)-\ep>0$ for any small enough $\ep>0$.

\begin{proof}
In fact, thanks to the representation formulas (\ref{ipp-l-w})-(\ref{ipp-l-w-w}) and the pathwise regularity of $W$ (Lemma \ref{lem:regu-noise-as}), all of these bounds can be derived from the classical properties of the fractional Sobolev spaces (see Appendix A). For instance, owing to (\ref{sobolev-inclusion}), one has, for any $p\geq 1$ and $\al \geq \frac{1}{4p}$,
\begin{equation}\label{product}
\norm{\vp \cdot (\der W)_{ts} }_{\cb_{-\al}} \leq c_{\al,p} \norm{(\der W)_{ts}}_{L^{2p}(0,1)} \norm{\vp}_{\cb},
\end{equation}
so that for any $\tilde{\ep}$ small enough,
\bean
\norm{S_{t-s}(\vp \cdot (\der W)_{ts} )}_\cb &\leq & c_{\al,p} \lln t-s \rrn^{-\al} \norm{(\der W)_{ts}}_{L^{2p}(0,1)} \norm{\vp}_\cb \qquad \text{(use (\ref{regu-semi-1}))}\\
&\leq & c_{\al,p,\tilde{\ep}}\lln t-s \rrn^{\frac{1}{2}-\tilde{\ep}-\al}\cn[W;\cac^{\frac{1}{2}-\tilde{\ep}}(\cb_{\eta,2p})]\, \norm{\vp}_\cb.
\eean
In the same way,
\bean
\norm{\Delta S_{t-u}(\vp \cdot (\der W)_{tu} }_\cb &\leq & c_{\al,p} \lln t-s \rrn^{-1-\al} \norm{(\der W)_{ts}}_{L^{2p}(0,1)} \norm{\vp}_\cb \qquad \text{(use (\ref{regu-semi-1}))}\\
&\leq & c_{\al,p,\ep}\lln t-s \rrn^{(\frac{1}{2}-\tilde{\ep}-\al)-1} \cn[W;\cac^{\frac{1}{2}-\tilde{\ep}}(\cb_{\eta,2p})]\, \norm{\vp}_\cb.
\eean
By taking $\al$ small enough, i.e., $p$ large enough, we get the expected bound, namely
$$\cn[L^W;\cac_2^{\frac{1}{2}-\ep}(\cl(\cb,\cb))]\leq c_{\ep,\tilde{\ep},p} \cn[W;\cac^{\frac{1}{2}-\tilde{\ep}}(\cb_{\eta,2p})].$$
The other estimates for $L^W$ can be proved along the same lines. As far as $L^{WW}$ is concerned, observe for instance that if $\ep>0$ is small enough, then one has
\bean
\lefteqn{\norm{\Delta S_{t-u}(\vp\cdot (\der W)_{tu} \cdot (\der W)_{us})}_{\cb_\ga}}\\
& \leq & c \lln t-u\rrn^{(\eta-\ga)-1} \norm{\vp \cdot (\der W)_{tu} \cdot (\der W)_{us}}_{\cb_\eta}\\
&\leq & c \lln t-u\rrn^{(\eta-\ga)-1} \norm{(\der W)_{tu} \cdot (\der W)_{us}}_{\cb_\eta} \norm{\vp}_{\cb_{1/2}} \qquad \text{(use (\ref{prod-sobol-1}))}\\
&\leq & c \lln t-u\rrn^{(\eta-\ga)-1} \norm{(\der W)_{tu}}_{\cb_{\eta,4}} \norm{(\der W)_{us}}_{\cb_{\eta,4}} \norm{\vp}_{\cb_{1/2}} \qquad \text{(use (\ref{prod-sobol-2}))}\\
&\leq & c_\ep \lln t-u\rrn^{(\frac{1}{2}+\eta-\ga-\tilde{\ep})-1} \lln u-s \rrn^{\frac{1}{2}-\tilde{\ep}} \cn[W;\cac^{\frac{1}{2}-\tilde{\ep}}(\cb_{\eta,4})]^2 \norm{\vp}_{\cb_{1/2}},
\eean
which entails that
$$\cn[L^{WW};\cac_2^{(1-\ga+\eta)-\ep}(\cl(\cb_{1/2},\cb_\ga))]
\leq c_{\ep,\tilde{\ep},p} \cn[W;\cac^{\frac{1}{2}-\tilde{\ep}}(\cb_{\eta,4})]^2.$$
The (analogous) proofs for the other bounds are left to the reader.
\end{proof}

\subsection{Proof of Theorem \ref{theo:identif}}
\label{sec:proofthm}

First, we need to justify that the right-hand side of the decomposition (\ref{representation}) is well-defined. This will rely (among others) on the following a priori controls for the solution $Y$. For the sake of clarity, we have postponed the proof of this statement to Appendix B.

\begin{lemma}\label{lem:apriori}
Under the hypotheses of Theorem \ref{theo:identif}, one has (almost surely)
\begin{equation}\label{apriori2-1}
Y \in \cacha^{2\eta}(\cb_\infty) \cap \cac^0(\cb_\ga),
\end{equation}
\begin{equation}\label{apriori2-2}
K^Y :=\delha Y -L^W f(Y) \in \cac_2^{\frac{1}{2}+\eta}(\cb).
\end{equation}
\end{lemma}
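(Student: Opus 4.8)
\textbf{Proof strategy for Lemma \ref{lem:apriori}.}

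The plan is to establish the two regularity statements in turn, bootstrapping from the crude information already available on $Y$ (namely $\sup_{t\le T}E[\|Y_t\|^2_\cb]<\infty$ and continuity, from Theorem \ref{thm:wp}) together with the convolutional estimates for $L^W$ and $L^{WW}$ from Proposition \ref{prop:cont-op}. For the first claim \eqref{apriori2-1}, I would start from the mild-It\^o reformulation of \eqref{eq:336}, i.e. $(\delha Y)_{ts} = L^W_{ts}(f(Y_s)) + \int_s^t S_{t-u}(f'(Y_u)f(Y_u)P)\,du$, and estimate each piece in $\cb_\infty = \cac^0([0,1])$. The drift term is harmless: $f,f',P$ are bounded, and $\int_s^t \|S_{t-u}\|_{\cl(\cb_\infty,\cb_\infty)}\,du \lesssim |t-s|$, which is far better than the $2\eta$-regularity asked (recall $\eta<\tfrac18$). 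For the noise term one invokes the $\cac_2^{1/4-\ep}(\cl(\cb_\infty,\cb_\infty))$-bound on $L^W$ from \eqref{control-l-w}, so that $\|L^W_{ts}(f(Y_s))\|_{\cb_\infty} \le \cn[L^W;\cac_2^{1/4-\ep}(\cl(\cb_\infty,\cb_\infty))]\,|t-s|^{1/4-\ep}\|f\|_\infty$; since $2\eta<\tfrac14$, choosing $\ep$ small gives $Y\in\cacha^{2\eta}(\cb_\infty)$. The membership $Y\in\cac^0(\cb_\ga)$ follows similarly: write $Y_t = S_t\psi + (\text{drift}) + L^W_{t0}(f(\psi)) + \cdots$, using $\psi\in\cb_\ga$ so $S_t\psi$ stays in $\cb_\ga$, the smoothing $\|\Delta S_{t-u}\|_{\cl(\cb_\eta,\cb_\ga)}\lesssim|t-u|^{\eta-\ga-1}$ from \eqref{regu-semi-1} to absorb the drift (here $\eta-\ga>-1$), and the $\cac_2^{(1/2-\ga+\eta)-\ep}(\cl(\cb_{1/2},\cb_\ga))$-type bounds on $L^W$; the constraint $\ga<\tfrac12+\eta$ is exactly what keeps all these H\"older exponents positive.

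For \eqref{apriori2-2}, the point is that $K^Y=\delha Y - L^W f(Y)$ is precisely the remainder after peeling off the first-order term, and by the defining equation it equals the It\^o-correction drift $\int_s^t S_{t-u}(f'(Y_u)f(Y_u)P)\,du$ \emph{plus} the difference $L^W_{ts}(f(Y_s)) - \int_s^t S_{t-u}(f(Y_u)\,dW_u)$. Actually it is cleaner to argue directly: from \eqref{decompo-strato}/\eqref{eq:336}, $(\delha Y)_{ts} = \int_s^t S_{t-u}(f(Y_u)\,dW_u) + \int_s^t S_{t-u}(f'(Y_u)f(Y_u)P)\,du$, so $K^Y_{ts} = \int_s^t S_{t-u}([f(Y_u)-f(Y_s)]\,dW_u) + \int_s^t S_{t-u}(f'(Y_u)f(Y_u)P)\,du$. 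The drift contributes $|t-s|$-regularity again, better than $\tfrac12+\eta$. For the stochastic term, the increment $f(Y_u)-f(Y_s)$ is controlled via $f'$ bounded and the already-established $\cb_\infty$-H\"older regularity of $Y$ (so $\|f(Y_u)-f(Y_s)\|_\cb \lesssim \|(\delha Y)_{us}\|_{\cb_\infty} + \|a_{us}Y_s\|_{\cb} \lesssim |u-s|^{2\eta}$, using $\eqref{apriori2-1}$ and the semigroup decay on $Y_s\in\cb_\ga$), and then a standard Garsia--Rodemich--Rumsey / factorization argument on the stochastic convolution, combined with the $\cac^{1/2-\tilde\ep}(\cb_{\eta,2p})$-regularity of $W$ from Lemma \ref{lem:regu-noise-as}, yields $\cac_2^{\frac12+\eta}$-regularity in $\cb$. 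Here one uses a moment computation of the type $E[\|\int_s^t S_{t-u}([f(Y_u)-f(Y_s)]\,dW_u)\|_\cb^{2q}] \lesssim |t-s|^{(1+2\eta)q}$ (up to an arbitrarily small loss), after which GRR promotes pathwise regularity.

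The main obstacle, as is typical in these convolutional rough-path arguments, is the bookkeeping of exponents: one must verify that after combining the $2\eta$-H\"older regularity of $f(Y)$, the singularity $|t-u|^{\eta-\ga-1}$ (or $-1$-type singularities from $\Delta S_{t-u}$), and the $(1/2-\tilde\ep)$-regularity of $W$, the resulting time-integrals still converge and the aggregate H\"older exponent clears $\tfrac12+\eta$ — which works only because $\eta<\tfrac18$ and $\ga<\tfrac12+\eta$, leaving just enough room. A secondary subtlety is that \eqref{apriori2-1} and \eqref{apriori2-2} are genuinely pathwise (a.s.) statements while the natural estimates are in $L^{2q}(\Omega)$, so one must pass through GRR (Lemma \ref{lem-grr}) rather than argue directly; since this is flagged as being proved in Appendix B, I would in the main text only sketch this and defer the full moment bounds there.
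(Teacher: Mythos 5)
Your toolbox is the right one (BDG-type moment bounds for the stochastic convolution, semigroup smoothing, Lemma \ref{lem-grr}), but the argument for \eqref{apriori2-1} rests on a false identity. You write $(\delha Y)_{ts}=L^W_{ts}(f(Y_s))+\int_s^t S_{t-u}(f'(Y_u)f(Y_u)P)\,du$, whereas the equation gives $(\delha Y)_{ts}=\int_s^t S_{t-u}(f(Y_u)\cdot dW_u)+\int_s^t S_{t-u}(f'(Y_u)f(Y_u)P)\,du$; the difference is exactly $\int_s^t S_{t-u}([f(Y_u)-f(Y_s)]\cdot dW_u)$, i.e.\ the hard part of $K^Y$. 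So the pathwise operator bound $\cn[L^W;\cac_2^{1/4-\ep}(\cl(\cb_\infty,\cb_\infty))]$ does not control $(\delha Y)_{ts}$, and the detour through $L^W$ buys nothing: the genuine It\^o convolution $\int_s^t S_{t-u}(f(Y_u)\cdot dW_u)$ is not of the "frozen integrand" form and must be estimated by the Burkholder--Davis--Gundy inequality in $\cb_{\frac14+\ep}\subset\cb_\infty$ (resp.\ in $\cb_\ga$, where the hypothesis $\sum_k\la_k k^{4\eta}<\infty$ and the product/Nemytskii estimates enter) followed by Lemma \ref{lem-grr}. This is what the paper does, and it is the only non-circular route, since no regularity of $u\mapsto f(Y_u)$ is available at this stage.

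For \eqref{apriori2-2} your decomposition of $K^Y$ is correct, but two points would make the argument fail as written. First, the moment bound you announce, $E[\|K^Y_{ts}\|_\cb^{2q}]\lesssim|t-s|^{(1+2\eta)q}$, is exactly borderline: to extract $\cac_2^{\frac12+\eta}$ from Lemma \ref{lem-grr} one needs the integral $\iint |t-s|^{q'\be'-q'(\frac12+\eta)-2}\,ds\,dt$ to converge for $\be'$ the moment exponent per power, and $\be'=\frac12+\eta$ gives $\iint|t-s|^{-2}=\infty$. The correct computation (inserting $\|f(Y_u)-f(Y_s)\|_\cb\lesssim|u-s|^{2\eta}$ \emph{squared} inside the BDG integral) yields $|t-s|^{(1+4\eta)q}$, which is what actually clears the threshold. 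Second, Lemma \ref{lem-grr} applied to a two-parameter process $R$ that is not the increment of a path requires controlling \emph{both} $U_{\be+2/q,q}(R)$ and $\cn[\delha R;\cac_3^\be]$; for $R=K^Y$ you must therefore also bound $\delha K^Y=L^W\,\der f(Y)$ in $\cac_3^{\frac12+\eta}(\cb)$ (which follows pathwise from \eqref{control-l-w} together with the regularity of $Y$ established in \eqref{apriori2-1}), a step your sketch omits entirely.
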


Recall that according to our convention (\ref{conve}), the definition of $K^Y$ in (\ref{apriori2-2}) must be understood as $K^Y_{ts}:=(\delha Y)_{ts}-L^W_{ts}( f(Y_s)) $ for every $s<t\in [0,T]$.

\begin{lemma}\label{cor:def-z}
Under the hypotheses of Theorem \ref{theo:identif}, let $Z$ be the process given by $Z_0=\psi$ and
\begin{equation}\label{defin-z}
(\delha Z)_{ts}=L^W_{ts}(f(Y_s))+L^{WW}_{ts}(f(Y_s)\cdot f'(Y_s))+\Laha_{ts}\big(R^Y \big).
\end{equation}
Then, almost surely, $Z$ is well-defined as an element of $\cacha^{2\eta}(\cb_\infty) \cap \cac^0(\cb_\ga)$, and there exists a constant $\la >0$ such that for any subinterval $I=[\ell_1,\ell_2]\subset [0,T]$, one has
\begin{equation}\label{bound-z}
\cn[Z;\cacha^{2\eta}(I;\cb_\infty)]+\cn[Z;\cac^0(I;\cb_\ga)]
\leq \|Z_{\ell_1}\|_{\cb_\ga}+c_{W,f} |I|^\la \cn[Y;\cq(I)],
\end{equation}
where we have set
\begin{equation*}
\cn[Y;\cq(I)]:=\cn[Y;\cacha^{2\eta}(I;\cb_\infty)]+\cn[Y;\cac^0(I;\cb_\ga)]+\cn[K^Y;\cac_2^{\frac12+\eta}(I;\cb)]
\end{equation*}
\end{lemma}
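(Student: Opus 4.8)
The claim is that the right-hand side of the decomposition \eqref{defin-z} genuinely defines a process $Z$ living in the same space $\cacha^{2\eta}(\cb_\infty) \cap \cac^0(\cb_\ga)$ as $Y$, together with the localized estimate \eqref{bound-z}. The plan is to treat the three summands on the right of \eqref{defin-z} separately, bound each one on a subinterval $I=[\ell_1,\ell_2]$, and then reconstruct $Z$ from its $\delha$-increments. The key mechanism throughout will be the gain of a positive power of $|I|$ coming either from the H\"older exponent of a $\cac_2$-norm being strictly larger than required, or from Theorem \ref{existence-laha} applied to $R^Y$; this is what produces the factor $|I|^\la$ in \eqref{bound-z}.

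\textbf{Step 1: the two main terms.} First I would use Proposition \ref{prop:cont-op} to control $L^W(f(Y))$ and $L^{WW}(f(Y)\cdot f'(Y))$. Since $f,f'$ are bounded with bounded derivatives and $Y\in\cac^0(\cb_\ga)\cap\cacha^{2\eta}(\cb_\infty)$ (Lemma \ref{lem:apriori}), the composed processes $f(Y)$ and $f(Y)f'(Y)$ inherit suitable regularity in $\cb$, $\cb_{1/2}$ and $\cb_\infty$; here one uses that $\cb_\ga\hookrightarrow\cb_{1/2}$ and the algebra/embedding properties of the fractional Sobolev spaces from Appendix A to handle products like $f(Y_s)\cdot f'(Y_s)$. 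Feeding these into \eqref{control-l-w}--\eqref{control-l-w-w} gives, on the interval $I$,
\[
\cn[L^W f(Y);\cac_2^{\frac12-\ep}(I;\cb_\infty)] \lesssim c_{W,f}\,\big(1+\cn[Y;\cq(I)]\big),
\]
and similarly for the $\cb_\ga$-valued and $L^{WW}$ pieces, with exponents $\tfrac12-\ga+\eta-\ep$, $1-\ep$, etc. The point is that all these H\"older exponents exceed the target regularities $2\eta$ (for $\cb_\infty$) and $0$ (for $\cb_\ga$) — recall $\eta<\tfrac18$, so $\tfrac12-\ep>2\eta$ and $\tfrac14-\ep>2\eta$ — so restricting to $I$ trades the surplus exponent for a power $|I|^\la$.

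\textbf{Step 2: the remainder term.} Next I would handle $\Laha(R^Y)$. Here I must first check $R^Y=-\delha(L^Wf(Y)+L^{WW}(f(Y)f'(Y)))$ lies in $\cac_3^\mu(\cb)$ for some $\mu>1$ and in $\mathrm{Im}\,\delha$, so that Theorem \ref{existence-laha} applies. Membership in the image is automatic since $R^Y$ is itself a $\delha$ of something. For the H\"older bound one expands $\delha$ using the algebraic rules \eqref{rel-alg-prod} and \eqref{alg-rul}: the $\delha L^W$ contribution vanishes by \eqref{alg-rul}, the $\delha L^{WW}$ contribution produces a term $L^W_{tu}((f(Y)f'(Y))_s\cdot(\der W)_{us})$ which by Proposition \ref{prop:cont-op} carries H\"older exponent $(\tfrac12-\ep)+(\tfrac12-\tilde\ep)>1$, and the remaining pieces involve $L^W$ (resp. $L^{WW}$) acting on increments $(\der f(Y))_{us}$ of regularity $2\eta$, giving exponents $(\tfrac12-\ga+\eta)-\ep+2\eta$ and $(1-\ga+\eta)-\ep+2\eta$ — all strictly above $1$ provided $\ep$ is small (this is where $\ga<\tfrac12+\eta$ is again used, and where one checks the arithmetic carefully). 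Then Theorem \ref{existence-laha} with $\al=0$ gives $\Laha(R^Y)\in\cac_2^{\mu}(\cb)$, hence in particular membership in $\cac_2^{2\eta}(I;\cb_\infty)$ after an embedding $\cb\hookrightarrow\cb_\infty$ at a small cost in the exponent (again Appendix A), with a $|I|^{\mu-\text{(target)}}$ gain.

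\textbf{Step 3: reconstruction and the main obstacle.} Having bounded all three summands as $\cac_2$-increments, I would note that their sum — call it $z_{ts}$ — satisfies $\delha z = -R^Y + \delha\Laha(R^Y) + (\text{the }\delha L^W, \delha L^{WW}\text{ terms}) = 0$ by construction of $R^Y$ and the defining property $\delha\Laha(R^Y)=R^Y$; more precisely one checks $\delha z_{tus}=0$ directly from \eqref{alg-rul} and $\delha\Laha h = h$. By the telescopic/reconstruction statement (Proposition \ref{prop:telesc}(i) together with Theorem \ref{existence-laha}'s uniqueness, in the standard $\delha$-sewing fashion), there is a unique path $Z$ with $Z_0=\psi$, $Z_{\ell_1}$ prescribed, and $(\delha Z)_{ts}=z_{ts}$; moreover $\cn[Z;\cac^0(I;\cb_\ga)]\le\|Z_{\ell_1}\|_{\cb_\ga}+\cn[\delha Z;\cac_2^{>0}(I;\cb_\ga)]\cdot|I|^{>0}$ and $\cn[Z;\cacha^{2\eta}(I;\cb_\infty)]=\cn[\delha Z;\cac_2^{2\eta}(I;\cb_\infty)]$ essentially by definition of the $\cacha$-seminorm. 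Collecting the bounds from Steps 1--2 and taking $\la>0$ to be the minimum of all the surplus exponents yields \eqref{bound-z}. The main obstacle I anticipate is the bookkeeping in Step 2: one must verify that \emph{every} term in the expansion of $R^Y$ genuinely has H\"older exponent strictly above $1$ in $\cac_3$ — this requires simultaneously tracking the regularity of $f(Y)$ in the three scales $\cb,\cb_{1/2},\cb_\infty$, combining it with the six operator estimates in Proposition \ref{prop:cont-op}, and using the product estimates \eqref{prod-sobol-1}--\eqref{prod-sobol-2} of the appendix for the composed fields — and the margins (governed by $\eta$ and by $\ga<\tfrac12+\eta$) are genuinely tight, so the choice of $\ep$ (equivalently $p$) must be made carefully.
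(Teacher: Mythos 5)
Your Steps 1 and 3 follow the paper's line, but Step 2 --- which you yourself flag as the crux --- contains a genuine gap, and the exponent arithmetic you give there is wrong. The term $L^W_{tu}\big(f(Y_s)f'(Y_s)\cdot(\der W)_{us}\big)$ produced by $\delha L^{WW}$ has $\cac_3$-exponent $(\tfrac12-\ep)+(\tfrac12-\tilde\ep)=1-\ep-\tilde\ep$, which is strictly \emph{less} than $1$, not greater. Likewise $L^W_{tu}(\der f(Y))_{us}$ on its own only reaches $(\tfrac12-\ep)+2\eta<\tfrac34$ (recall $\eta<\tfrac18$), and the exponents $(\tfrac12-\ga+\eta)-\ep+2\eta$ and $(1-\ga+\eta)-\ep+2\eta$ that you quote are even smaller (below $3\eta$ and below $\tfrac12+3\eta$ respectively, since $\ga>\tfrac12$). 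So no individual term of your splitting of $R^Y$ belongs to $\cac_3^{\mu}(\cb)$ with $\mu>1$, and Theorem \ref{existence-laha} cannot be applied term by term.

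The missing idea is a cancellation between the two subcritical terms: they must be regrouped as $N_{us}:=\der(f(Y))_{us}-(\der W)_{us}\cdot f(Y_s)\cdot f'(Y_s)$, so that $R^Y_{tus}=L^W_{tu}N_{us}+L^{WW}_{tu}\,\der(f(Y)\cdot f'(Y))_{us}$, and one must then show $N\in\cac_2^{\frac12+\ep}(\cb)$. This is done by writing $\der(f(Y))_{us}=\int_0^1 f'(Y_s+r(\der Y)_{us})\,dr\cdot(\der Y)_{us}$ and substituting $(\der Y)_{us}=L^W_{us}(f(Y_s))+K^Y_{us}+a_{us}Y_s$: the leading part $f(Y_s)\cdot(\der W)_{us}$ of $L^W_{us}(f(Y_s))$ cancels against the subtracted term, and what survives involves $K^Y\in\cac_2^{\frac12+\eta}(\cb)$ from Lemma \ref{lem:apriori} (this is precisely where the third component of $\cn[Y;\cq(I)]$ enters the estimate), the term $a_{us}Y_s$ of order $|u-s|^{\ga}$ with $\ga>\tfrac12$, the auxiliary operator $L^{aW}_{us}(\vp):=\int_s^u a_{uv}(\vp\cdot dW_v)$ of order $\tfrac12+\eta-\ep$, and a Taylor remainder $[f'(Y_s+r(\der Y)_{us})-f'(Y_s)]\cdot(\der W)_{us}\cdot f(Y_s)$ of order $2\eta+\tfrac12-\tilde\ep$. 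Only after this regrouping does $L^WN$ reach regularity $(\tfrac12-\ep)+(\tfrac12+\ep')>1$. For the remaining summand one should use $\cn[L^{WW};\cac_2^{1-\ep}(\cl(\cb,\cb))]$ together with $\|\der(f(Y)\cdot f'(Y))_{us}\|_{\cb}\lesssim|u-s|^{2\eta}$, giving $1-\ep+2\eta>1$; the $\cl(\cb_{1/2},\cb_\ga)$ operator norm you invoke yields a worse exponent and is not the right choice here. Without these two corrections the argument does not close.
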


\begin{proof}
First, according to Theorem \ref{existence-laha}, we need to justify that $R^Y \in \cac_3^\mu(\cb)$ for some $\mu >1$. To this end, expand $R$ using the algebraic rules (\ref{rel-alg-prod}) and (\ref{apriori2-1}), which gives
\begin{equation}\label{expansion-r}
R^Y_{tus}=L^W_{tu}N_{us}+L^{WW}_{tu} \der(f(Y)\cdot f'(Y))_{us},
\end{equation}
with $N_{us}:=\der(f(Y))_{us}-(\der W)_{us}\cdot f(Y_s)\cdot f'(Y_s)$. Thanks to (\ref{control-l-w-w}) and (\ref{apriori2-1}), it is readily checked that $L^{WW} \der(f(Y)\cdot f'(Y)) \in \cac_3^{1+2\eta-\ep}(\cb)$ for any small $\ep>0$, since
\bean
\| \der (f(Y)\cdot f'(Y))_{us}\|_{\cb} &\leq & c\, \| (\der Y)_{us}\|_\cb \ \leq \ c\, \{ \| (\delha Y)_{us}\|_\cb+\|a_{us} Y_s\|_\cb \}\\
&\leq &c\, \{|u-s|^{2\eta} \cn[Y;\cac^{2\eta}(\cb_\infty)]+|u-s|^\ga \cn[Y;\cac^0(\cb_\ga)]\},
\eean
where we have used (\ref{hold-sob}) to get the last inequality (recall that $a_{ts}:=S_{t-s}-\id$).

\smallskip

\noindent
Then, as far as $L^W N$ is concerned, let us expand $N$ using standard differential calculus, which provides us with the expression
\begin{multline}\label{defi:proc-n}
N_{us}=\int_0^1 dr \, f'(Y_s+r(\der Y)_{us})\cdot  \lcl K^Y_{us}+a_{us}Y_s+L^{aW}_{us}(f(Y_s)) \rcl \\
+\int_0^1 dr \, \lc f'(Y_s+r(\der Y)_{us})-f'(Y_s) \rc \cdot (\der W)_{us} \cdot f(Y_s),
\end{multline}
where the additional operator-valued process $L^{aW}$ is defined by
\begin{equation*}
L^{aW}_{ts}(\vp):= \int_s^t a_{tu}( \vp \cdot dW_u)=a_{ts}(\vp \cdot (\der W)_{ts})-\int_s^t \Delta S_{t-u}(\vp \cdot (\der W)_{tu}) du.
\end{equation*}
Now, since $L^W \in \cac_2^{\frac12-\ep}(\cl(\cb,\cb))$, it is sufficient to prove that $N\in \cac_2^{\frac12+\ep}(\cb)$ for some small $\ep>0$. But, with the expansion (\ref{defi:proc-n}) in hand, this becomes an easy consequence of the a priori controls given by Lemma \ref{lem:apriori}, together with the regularity property:
\begin{equation*}
\cn[L^{aW};\cac_2^{(\frac{1}{2}+\eta)-\ep}(\cl(\cb_{1/2},\cb))]\leq c_{\ep,\tilde{\ep},p}\cn[W;\cac^{\frac{1}{2}-\tilde{\ep}}(\cb_{\eta,2p})],
\end{equation*}
derived from (\ref{prod-sobol-1}). Note in particular how important the assumption $\ga >1/2$, insofar as, by (\ref{hold-sob}),
$$\|f'(Y_s+r(\der Y)_{us})\cdot (a_{us} Y_s)\|_\cb\leq C_f \|a_{us} Y_s\|_{\cb} \leq C |u-s|^{\ga}\, \cn[Y;\cac^0(\cb_\ga)].$$
%Observe for instance that, owing to (\ref{control-l-w}) and (\ref{sobolev-inclusion-2}), one has, for $\ep >0$ small enough,
%\bean
%\lefteqn{\Big\|  \int_0^1 dr \, \lc f'(Y_s+r(\der Y)_{us})-f'(Y_s) \rc \cdot (\der W)_{us} \cdot f(Y_s)  \Big\|_\cb}\\
%&\leq & c_{f} \norm{(\der W)_{us}}_\cb \norm{(\der Y)_{us}}_{\cb_\infty}\\
%&\leq & c_{f,W}  \lln u-s \rrn^{\frac{1}{2}-\ep} \lcl  \norm{(\delha Y)_{us}}_{\cb_\infty} +\norm{a_{us}Y_s}_{\cb_\infty} \rcl\\
%&\leq & c_{f,W} \lcl \lln u-s \rrn^{\frac12+2\eta-\ep} \cn[Y;\cacha^{2\eta}(\cb_\infty)]+\lln u-s \rrn^{\frac12-\ep} \norm{a_{us}Y_s}_{\cb_{\frac{1}{4}+\ep}} \rcl\\
%&\leq & c_{f,W} \lcl \lln u-s \rrn^{1+2\eta-2\ep} \cn[Y;\cacha^{2\eta}(\cb_\infty)]+\lln u-s \rrn^{\frac{1}{4}+\ga-2\ep}  \cn[Y;\cac^0(\cb_\ga)] \rcl.
%\eean

We are thus in a position to apply $\Laha$ to $R^Y$, and so $Z$ is properly defined through (\ref{defin-z}). The regularity of $Z$ and the bound (\ref{bound-z}) are immediate consequences of (\ref{control-l-w})-(\ref{control-l-w-w}) and the contraction property (\ref{contraction-laha}) of $\Laha$. The details are left to the reader.

\end{proof}

\begin{remark}\label{rk:optim}
Although not optimal, the two regularity results (\ref{apriori2-1}) and (\ref{apriori2-2}) are thus sufficient for us to prove that the right-hand side of the decomposition (\ref{representation}) is indeed well-defined. We also retrieve an important stability phenomenon here: $Y$ and $Z$ both belong to the same space $\cacha^{2\eta}(\cb_\infty) \cap \cac^0(\cb_\ga)$. A posteriori, this accounts for our choice in favor of this particular topology.
\end{remark}

We can eventually proceed to prove Theorem \ref{theo:identif}.

\begin{proof}[Proof of Theorem \ref{theo:identif}]
We need to identify the increments of $Y$ with those of the process $Z$ defined in Lemma \ref{cor:def-z}. To do so, we naturally rely on some expansion of the right-hand side of (\ref{eq:336}). Precisely, we have that
\bean
\lefteqn{\int_s^t S_{t-u}(f(Y_u)\cdot dW_u)+\int_s^t S_{t-u}(P \cdot f(Y_u)\cdot f'(Y_u)) \, du}\\
&=&L^W_{ts}(f(Y_s))+L^{WW}_{ts}(f(Y_s)\cdot f'(Y_s))+J^Y_{ts},
\eean
with
\begin{equation}
J^Y_{ts}:=\int_s^t S_{t-u}(P \cdot \der(f(Y)\cdot f'(Y))_{us}) \, du+\int_s^t S_{t-u}(N^Y_{us} \cdot dW_u),
\end{equation}
where the process $N^Y_{ts}=\der(f(Y))_{ts}-(\der W)_{ts} \cdot f(Y_s) \cdot f'(Y_s)$ has already been considered in the proof of Lemma \ref{cor:def-z}. Therefore, with this notation, it holds that
$$\delha(Z-Y)=\Laha_{ts}(R^Y)-J^Y_{ts}.$$
Now, by the contraction property (\ref{contraction-laha}), we know that $\Laha(R^Y) \in \cac_2^{\mu_1}(\cb)$ for some $\mu_1 >1$. Besides, with the same ingredients as in the proof of Lemma \ref{lem:apriori} (Burkholder-Davis-Gundy inequality plus Lemma \ref{lem-grr}, see Appendix B), we can easily lean on the expansion (\ref{defi:proc-n}) of $N$ to  prove that $J^Y\in \cac_2^{\mu_2}(\cb)$ for some $\mu_2>1$
(note that $\delha J^Y=R^Y$). Consequently, $\delha(Z-Y) \in \cac_2^\mu(\cb)$ with $\mu=\inf(\mu_1,\mu_2)>1$, and this entails that $\delha(Z-Y) =0$. Indeed, for any partition $\cp_{[s,t]}=\{s =t_1 < \ldots <t_n=t\}$ of $[s,t]$, one has, due to the telescopic sum property reported in Proposition \ref{prop:telesc},
$$\|\delha(Z-Y)_{ts}\|_{\cb} \leq \sum_{i}\|\delha(Z-Y)_{t_{i+1}t_i}\|_{\cb} \leq c \sum_i |t_{i+1}-t_i|^\mu \leq |\cp_{[s,t]}|^{\mu-1} \lln t-s\rrn,$$
and we conclude by letting the mesh $|\cp_{[s,t]}|:=\max_i |t_{i+1}-t_i|$ tend to $0$.
\end{proof}

As a straightforward consequence of the decomposition (\ref{representation}), we can exhibit an almost sure bound for $Y$ in terms of $W$. Indeed, by plugging the estimate (\ref{bound-z}) back into the equation, we deduce that for any subinterval $I=[\ell_1,\ell_2]\subset [0,T]$,
$$\cn[Y;\cacha^{2\eta}(I;\cb_\infty)]+\cn[Y;\cac^0(I;\cb_\ga)] \leq \|Y_{\ell_1}\|_{\cb_\ga}+C_W |I|^\la \cn[Y;\cq(I)]$$
for some constant $\la >0$, and similar estimates for $K^Y=L^{WW}(f(Y) \cdot f'(Y))+\Laha(R^Y)$ finally show that
$$\cn[Y;\cq(I)] \leq \|Y_{\ell_1}\|_{\cb_\ga}+C_W |I|^\la \cn[Y;\cq(I)].$$
At this point, a basic patching argument easily leads us to the following statement:

\begin{corollary}\label{cor:contr-sol}
Under the hypotheses of Theorem \ref{theo:identif}, there exist $\ep>0$ and $p\geq 1$ such that
\begin{equation}\label{contr-sol}
\cn[Y;\cq([0,T])]
\leq G_{\ep,p}\big( \norm{\psi}_{\cb_\ga},\cn[W;\cac^{\frac{1}{2}-\ep}(\cb_{\eta,2p})] \big)
\end{equation}
for some deterministic function $G_{\ep,p}:(\R^+)^2 \to \R^+$ bounded on bounded sets.
\end{corollary}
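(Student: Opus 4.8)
The plan is to derive the global bound (\ref{contr-sol}) from the local estimate (\ref{bound-z}) by a standard patching (or bootstrap) argument over subintervals whose length is small enough that the contraction factor $c_{W,f}|I|^\la$ becomes harmless. First I would record the consequence of the decomposition (\ref{representation}) together with Lemma \ref{cor:def-z}: since $Y$ itself satisfies $(\delha Y)_{ts}=(\delha Z)_{ts}$ (by Theorem \ref{theo:identif}), the very estimate (\ref{bound-z}) applied with $Z$ replaced by $Y$ gives, for every subinterval $I=[\ell_1,\ell_2]\subset[0,T]$,
\begin{equation*}
\cn[Y;\cacha^{2\eta}(I;\cb_\infty)]+\cn[Y;\cac^0(I;\cb_\ga)]\leq \|Y_{\ell_1}\|_{\cb_\ga}+c_{W,f}|I|^\la\,\cn[Y;\cq(I)].
\end{equation*}
Next I would bound the remaining piece $\cn[K^Y;\cac_2^{\frac12+\eta}(I;\cb)]$ of $\cn[Y;\cq(I)]$ by going back to $K^Y=L^{WW}(f(Y)\cdot f'(Y))+\Laha(R^Y)$, using (\ref{control-l-w-w}), the contraction property (\ref{contraction-laha}) of $\Laha$, and the estimate on $R^Y\in\cac_3^\mu(\cb)$ established inside the proof of Lemma \ref{cor:def-z}; all these contributions again carry a factor $|I|^\la$ (after possibly shrinking $\la>0$) times $\cn[Y;\cq(I)]$ plus a term controlled by $\|Y_{\ell_1}\|_{\cb_\ga}$ and powers of $\cn[W;\cac^{\frac12-\ep}(\cb_{\eta,2p})]$. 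Collecting these yields the self-referential inequality $\cn[Y;\cq(I)]\leq \|Y_{\ell_1}\|_{\cb_\ga}+C_W|I|^\la\,\cn[Y;\cq(I)]$ announced just before the corollary.

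The core of the argument is then purely deterministic. Fix $I_0=[\ell_1,\ell_2]$ with $C_W|I_0|^\la\leq \tfrac12$; absorbing the last term gives $\cn[Y;\cq(I_0)]\leq 2\|Y_{\ell_1}\|_{\cb_\ga}$. In particular, since $\|Y_{\ell_2}\|_{\cb_\ga}\leq \cn[Y;\cac^0(I_0;\cb_\ga)]\leq \cn[Y;\cq(I_0)]$, we control the $\cb_\ga$-norm of the solution at the right endpoint of $I_0$ by (twice) its norm at the left endpoint. I would then choose $h>0$ with $C_W h^\la\leq\tfrac12$, partition $[0,T]$ into $M=\lceil T/h\rceil$ consecutive intervals $I_m=[mh,(m+1)h]\wedge T$, and iterate: on each $I_m$ one gets $\cn[Y;\cq(I_m)]\leq 2\|Y_{mh}\|_{\cb_\ga}$ and $\|Y_{(m+1)h}\|_{\cb_\ga}\leq 2\|Y_{mh}\|_{\cb_\ga}$, hence $\|Y_{mh}\|_{\cb_\ga}\leq 2^m\|\psi\|_{\cb_\ga}$ and $\cn[Y;\cq(I_m)]\leq 2^{m+1}\|\psi\|_{\cb_\ga}$ for all $m<M$. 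Since the $\cacha$-, $\cac^0$- and $\cac_2$-seminorms over $[0,T]$ are each dominated by a finite sum (and, for the $\cacha^{2\eta}$ and $\cac_2^{\frac12+\eta}$ pieces, by a constant depending only on $M$ and $T$) of the corresponding seminorms over the $I_m$'s — here one uses the telescopic-sum property of Proposition \ref{prop:telesc}(i) together with the boundedness of the semigroup $(S_t)$ to glue the local $\delha$-increments — this gives $\cn[Y;\cq([0,T])]\leq C(M)\,2^{M}\,\|\psi\|_{\cb_\ga}$. Tracking how $M$, $h$ and $C_W$ depend on $\cn[W;\cac^{\frac12-\ep}(\cb_{\eta,2p})]$ (namely $h^{-1}\sim C_W^{1/\la}$ and $C_W$ is polynomial in $\cn[W;\cdots]$, since by Proposition \ref{prop:cont-op} the operators $L^W,L^{WW}$ are bounded by the first and second powers of this norm) produces a deterministic function $G_{\ep,p}$ of $(\|\psi\|_{\cb_\ga},\cn[W;\cac^{\frac12-\ep}(\cb_{\eta,2p})])$ that is bounded on bounded sets, which is exactly (\ref{contr-sol}).

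The main obstacle, I expect, is bookkeeping rather than any deep new estimate: one must check that \emph{every} seminorm entering $\cn[Y;\cq(I)]$ — in particular the two-variable $\cac_2^{\frac12+\eta}(\cb)$-norm of $K^Y$ and the $\cacha^{2\eta}(\cb_\infty)$-norm — genuinely recombines over a partition with a constant that is uniform in the number of pieces, or at worst grows only geometrically, so that the final bound remains finite and of the claimed form. The $\cac^0$-norm patches trivially; the Hölder-type $\delha$-seminorms require the telescopic identity of Proposition \ref{prop:telesc} plus the uniform bound $\|S_t\|_{\cl(\cb)}\leq 1$ to control the ``cross'' increments straddling a partition point, and this is where the estimate $C_W|I|^\la\le\frac12$ per subinterval is essential so that the number of subintervals needed stays a controlled function of $\cn[W;\cdots]$. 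Once this patching lemma is in place, assembling $G_{\ep,p}$ and verifying it is bounded on bounded sets is routine.
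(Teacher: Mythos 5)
Your proposal is correct and follows exactly the route the paper takes: identify $Y$ with $Z$ via Theorem \ref{theo:identif}, plug the local estimate (\ref{bound-z}) (together with the analogous bound on $K^Y$) back into the equation to obtain the self-referential inequality $\cn[Y;\cq(I)]\leq \|Y_{\ell_1}\|_{\cb_\ga}+C_W|I|^\la\,\cn[Y;\cq(I)]$, absorb the last term on intervals with $C_W|I|^\la\leq\frac12$, and conclude by the patching argument the paper leaves implicit. Your write-up in fact supplies more detail than the paper (which dismisses the final step as ``a basic patching argument''), and the points you flag — finiteness of $\cn[Y;\cq(I)]$ for the absorption, recombination of the $\delha$-seminorms across partition points, and the polynomial dependence of $C_W$ and hence of $G_{\ep,p}$ on $\cn[W;\cac^{\frac12-\ep}(\cb_{\eta,2p})]$ — are exactly the right ones.
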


%%%%%%%%%%%%%%%%%%%%%%%%%%%%%%%%%%%%%%%%%%%%%%%%%%%%%%%%%%%%%%%%%%%%%%%%%%%%%%%%%%%%%%%%%%%%%%%%%%%%%%%%%%%%%%%%%%%

\

\subsection{Comparison with smooth solutions}
\label{sec:continuity}

The previous considerations will allow us to prove our continuity result (Theorem \ref{thm:continuity}) and for this purpose, we first go back to the case where the driving noise is an absolutely continuous process $\widetilde{W}$ (with values in $\cb_{\eta,2p}$), \emph{assumingly defined on the same probability space as $W$}. In this situation, our mild equation is naturally understood in a pathwise sense as a classical (Riemann-Lebesgue) mild equation, i.e.,
\begin{equation}\label{eq-reg}
\Yti_t=S_t\psiti+\int_0^t S_{t-u}(f(\Yti_u) \cdot d\Wti_u)=S_t \psiti+\int_0^t S_{t-u}(f(Y_u) \cdot \Wti'_u) \, du,
\end{equation}
and the (pathwise) existence and uniqueness of the solution $\Yti$ follows from standard PDE results. The key step towards a comparison between $Y$ and $\Yti$ lies in the following result, which points out the similarity between the couple $(L^W,L^{WW})$ at the core of the previous considerations and the couple $(L^{\Wti},L^{\Wti\Wti})$ constructed from $\Wti$:

\begin{lemma}
Define the operator-valued processes $L^{\Wti}$ and $L^{\Wti\Wti}$ in the classical Riemann-Lebesgue sense as
\begin{equation}\label{defi:l-w-ti}
L^{\widetilde{W}}_{ts}(\vp):=\int_s^t S_{t-u}( \vp \cdot d\widetilde{W}_u)\quad , \quad L^{\widetilde{W}\widetilde{W}}_{ts}(\vp):=\int_s^t S_{t-u}\lp \vp \cdot  (\der \widetilde{W})_{us} \cdot d\widetilde{W}_u \rp,
\end{equation}
for every smooth function $\vp$. Then both formulas (\ref{ipp-l-w}) and (\ref{ipp-l-w-w}) remain valid when substituting $\widetilde{W}$ for $W$, and accordingly the bounds (\ref{control-l-w}) and (\ref{control-l-w-w}) hold true for $\widetilde{W}$ as well.
\end{lemma}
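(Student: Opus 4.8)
The plan is to mimic, line by line, the computations that led to Lemma~\ref{lem:ipp} and Proposition~\ref{prop:cont-op}, observing that the only probabilistic inputs used there---It\^o's formula and the $L^2(\Omega,\cb)$-convergence of the truncated sums---can be replaced by elementary deterministic facts once $\Wti$ has absolutely continuous paths in $\cb_{\eta,2p}$. Since $\Wti'_u$ exists for a.e.\ $u$, each integral in (\ref{defi:l-w-ti}) is an honest Bochner integral, and the maps $u\mapsto L^{\Wti}_{tu}(\vp)$, $u\mapsto L^{\Wti\Wti}_{tu}(\vp)$ are (for fixed $t$) absolutely continuous, so the classical integration-by-parts / Leibniz rule applies without any stochastic correction. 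Concretely, I would write, for the truncated noise $\Wti^N:=\sum_{i=1}^N\sqrt{\la_i}\,\Wti^i\,e_i$ (using the obvious analogue of (\ref{repr-sum})),
\[
L^{\Wti^N}_{ts}(\vp)=\int_s^t S_{t-u}(\vp\cdot d\Wti^N_u)=\int_s^t\frac{d}{du}\big[S_{t-u}(\vp\cdot(\der\Wti^N)_{us})\big]\,du+\int_s^t\Delta S_{t-u}(\vp\cdot(\der\Wti^N)_{us})\,du,
\]
and the first term telescopes to $S_{t-s}(\vp\cdot(\der\Wti^N)_{ts})$, while in the derivative one recognises $-\Delta S_{t-u}(\vp\cdot(\der\Wti^N)_{us})\,du+S_{t-u}(\vp\cdot\Wti^{N\prime}_u)\,du$, the two $\Delta$-terms combining to $-\int_s^t\Delta S_{t-u}(\vp\cdot(\der\Wti^N)_{tu})\,du$ after rewriting $(\der\Wti^N)_{tu}=(\der\Wti^N)_{ts}-(\der\Wti^N)_{us}$ and using $\frac{d}{du}(\der\Wti^N)_{us}=\Wti^{N\prime}_u$; this is exactly (\ref{ipp-l-w}) with $\Wti$ in place of $W$. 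The same scheme applied to $u\mapsto S_{t-u}(\vp\cdot(\der\Wti^N)_{us}^2)$ reproduces (\ref{ipp-l-w-w}), \emph{except} that the diagonal It\^o term $+\mathbf 1_{\{i=j\}}\int_s^t S_{t-u}(\vp\cdot e_i\cdot e_j)\,du$ from the proof of Lemma~\ref{lem:ipp} is absent here (no quadratic variation), which is precisely why the drift $\int_s^t S_{t-u}(\vp\cdot P)\,du$ does not appear in the definition (\ref{defi:l-w-ti}) of $L^{\Wti\Wti}$: the two discrepancies cancel and the final formula (\ref{ipp-l-w-w}) comes out identical.

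Passing to the limit $N\to\infty$ is the step I would be most careful about, but it is not genuinely hard: by Proposition~\ref{prop:conver} the partial sums of $\sum_k\sqrt{\la_k}(\der\Wti^k)_{ts}e_k$ converge to $(\der\Wti)_{ts}$ in $\cb_{\eta,2p}$ for every $p$, uniformly in $(s,t)$, and all the operations appearing on the right-hand sides of (\ref{ipp-l-w})--(\ref{ipp-l-w-w})---multiplication by the smooth $\vp$, squaring, applying $S_{t-u}$ and $\Delta S_{t-u}$, integrating in $u$ against the integrable singularity $|t-u|^{(\eta-\ga)-1}$ or $|t-u|^{-1-\al}$ with $\al$ small---are continuous in the relevant Sobolev topologies by the product and smoothing estimates recalled in Appendix~A (the very same ones invoked in the proof of Proposition~\ref{prop:cont-op}). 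Hence the identities pass to the limit and (\ref{ipp-l-w})--(\ref{ipp-l-w-w}) hold verbatim for $\Wti$. Once these representation formulas are in hand, the bounds (\ref{control-l-w}) and (\ref{control-l-w-w}) for $\Wti$ are obtained by \emph{literally} the chain of inequalities displayed in the proof of Proposition~\ref{prop:cont-op}: those estimates only ever used (\ref{ipp-l-w})--(\ref{ipp-l-w-w}), the Sobolev embeddings (\ref{sobolev-inclusion}), (\ref{prod-sobol-1})--(\ref{prod-sobol-2}), the semigroup regularisation (\ref{regu-semi-1}), and the H\"older control $\cn[W;\cac^{\frac12-\tilde\ep}(\cb_{\eta,2p})]<\infty$, the last of which is now \emph{part of the hypothesis} on $\Wti$ rather than a consequence of Lemma~\ref{lem:regu-noise-as}. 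So I would simply say that the proof of Proposition~\ref{prop:cont-op} applies unchanged with $W$ replaced by $\Wti$.

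In summary, the structure of the argument is: (1) establish (\ref{ipp-l-w})--(\ref{ipp-l-w-w}) for $\Wti^N$ by the deterministic Leibniz rule, noting the absence of the It\^o diagonal term; (2) let $N\to\infty$ using the $\cb_{\eta,2p}$-convergence from Proposition~\ref{prop:conver} together with the continuity of the (deterministic) operations involved, to get the two formulas for $\Wti$; (3) invoke the proof of Proposition~\ref{prop:cont-op} verbatim to deduce (\ref{control-l-w})--(\ref{control-l-w-w}) for $\Wti$. The only place requiring a word of care is making sure the singular time integrals are controlled uniformly in $N$ so that dominated convergence applies in step~(2), but since the singularities $|t-u|^{-1-\al}$ (with $\al<1$, achieved by taking $p$ large) and $|t-u|^{(\eta-\ga)-1}$ (with $\eta-\ga>-1$, using $\ga<\half+\eta<1$) are integrable and the $\cb_{\eta,2p}$-norms of $(\der\Wti^N)$ are bounded by those of $(\der\Wti)$, this is routine. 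I would therefore present steps (1)--(2) in a few lines and dispatch step~(3) by reference.
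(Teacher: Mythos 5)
Your proposal is correct and follows essentially the same route as the paper: the paper's proof likewise just replaces It\^o's formula by ordinary integration by parts (noting that $\int_s^t (\der\Wti)_{us}\cdot d\Wti_u=\frac12(\der\Wti)_{ts}^2$ for an absolutely continuous path), with the key point--which you identify correctly--being that the missing It\^o diagonal term is exactly compensated by the absence of the trace term $\int_s^t S_{t-u}(\vp\cdot P)\,du$ in the definition (\ref{defi:l-w-ti}) of $L^{\Wti\Wti}$, after which the bounds (\ref{control-l-w})--(\ref{control-l-w-w}) follow verbatim from the proof of Proposition \ref{prop:cont-op}. Two cosmetic remarks only: the truncation in $N$ and the limiting argument are not really needed here since the Bochner integrals are defined directly for $\Wti$ (the paper works with $\Wti$ itself), and in your first display the telescoped boundary term is $\vp\cdot(\der\Wti^N)_{ts}$ (because $S_0=\Id$), the factor $S_{t-s}$ emerging only after absorbing the piece $\int_s^t\Delta S_{t-u}(\vp\cdot(\der\Wti^N)_{ts})\,du$ of the remaining integral.
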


\begin{proof}
It suffices to replace the use of Itô's formula in the proof of Lemma \ref{lem:ipp} with standard integration by parts. Indeed, as an absolutely continuous process, $\Wti$ obeys the rules of standard differential calculus and one has for instance
$$\mathbf{\Wti^{2}}_{ts}:=\int_s^t (\der \Wti)_{us} \cdot d\Wti_u=\frac12 (\der \Wti)_{ts}^2.$$
Consequently, it holds that
\bean
L^{\Wti\Wti}_{ts} \vp &=& \int_s^t S_{t-u}(\vp \cdot d_u(\mathbf{\Wti^{2}}_{us}))\\
&=& \int_s^t S_{t-u}(\vp \cdot d_u(\mathbf{\Wti^{2}}_{us}-\mathbf{\Wti^{2}}_{ts}))\\
&=& \frac12 S_{t-u}(\vp \cdot (\der \Wti)^2_{ts})-\frac12 \int_s^t \Delta S_{t-u}(\vp \cdot [(\der \Wti)^2_{ts}-(\der \Wti)^2_{us}]) du\\
&=& \frac12 S_{t-u}(\vp \cdot (\der \Wti)^2_{ts})-\frac12 \int_s^t \Delta S_{t-u}(\vp \cdot \big[(\der \Wti)^2_{us}+2(\der \Wti)_{tu} \cdot (\der \Wti)_{us}\big]) \,du,
\eean
which precisely fits the pattern of (\ref{ipp-l-w-w}).
\end{proof}

Another consequence of the similarity between $(L^W,L^{WW})$ and $(L^{\Wti},L^{\Wti\Wti})$ through the two formulas (\ref{ipp-l-w}) and (\ref{ipp-l-w-w}) is a set of (readily-checked) Lipschitz-type bounds: with the notations of Proposition \ref{prop:cont-op}, one has, for some polynomial expression $c_{W,\Wti}$,
\begin{equation}\label{contin-bound-1}
\cn[L^W-L^{\Wti};\cac_2^{\frac12-\ep}(\cl(\cb,\cb))]\leq c_{W,\Wti}\, \cn[W-\Wti;\cac^{\frac12-\tilde{\ep}}(\cb_{\eta,2p})],
\end{equation}
\begin{equation}\label{contin-bound-2}
\cn[L^{WW}-L^{\Wti\Wti};\cac_2^{1-\ep}(\cl(\cb,\cb))]\leq c_{W,\Wti}\, \cn[W-\Wti;\cac^{\frac12-\tilde{\ep}}(\cb_{\eta,2p})],
\end{equation}
and this bound remains valid for all of the other topologies involved in Proposition \ref{prop:cont-op}.

\smallskip

\noindent
Then, as far as the solution $\Yti$ is concerned, note that
$$(\delha \Yti)_{ts}=L^{\Wti}_{ts}f(\Yti_s)+L^{\Wti \Wti}_{ts}(f(\Yti_s) \cdot f'(Y_s))+J^{\Yti}_{ts}$$
with $J^{\Yti}_{ts}:=\int_s^t S_{t-u}\big(\big[\der f(\Yti)_{us}-(\der \Wti)_{us} \cdot f(Y_s)\cdot f'(Y_s)\big] \cdot d\Wti_u\big)$, and it is obvious in this (absolutely continuous) situation that $J^{\Yti} \in \cac_2^\mu(\cb)$ for some $\mu >1$. Therefore, we can easily follow the lines of our previous identification procedure (see the proofs of Lemma \ref{cor:def-z} and Theorem \ref{theo:identif}) in order to exhibit a similar formula for the $\delha$-variations of $\Yti$:

\begin{lemma}
Under the hypotheses of Theorem \ref{thm:continuity}, assume that $\widetilde{\psi} \in \cb_\ga$. Then the $\delha$-variations of the solution $\widetilde{Y}$ to (\ref{eq-reg}) can be expanded as
\begin{equation}\label{representation-ti}
(\delha \widetilde{Y})_{ts}=L^{\widetilde{W}}_{ts}(f(\widetilde{Y}_s))+L^{\widetilde{W}\widetilde{W}}_{ts}(f(\widetilde{Y}_s)\cdot f'(\widetilde{Y}_s))+\Laha_{ts}\big(R^{\widetilde{Y}} \big),
\end{equation}
where $R^{\widetilde{Y}}_{tus}:=-\delha \big( L^{\widetilde{W}}f(\widetilde{Y})+L^{\widetilde{W}\widetilde{W}}\big(f(\widetilde{Y})\cdot f'(\widetilde{Y})\big)\big)_{tus}$. In particular, the bound (\ref{contr-sol}) remains valid for $\Yti$ when replacing $\psi$ (resp. $W$) with $\psiti$ (resp. $\Wti$).
\end{lemma}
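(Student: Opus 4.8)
The plan is to transcribe, almost line for line, the identification argument carried out for $Y$ in Section~\ref{sec:proofthm}, now in the (genuinely simpler) absolutely continuous framework: replace $W$ by $\Wti$ throughout, It\^o's formula by ordinary differential calculus, and the stochastic estimates by Riemann--Lebesgue ones. All the analytic input is already available: by the two preceding lemmas the integration-by-parts formulas (\ref{ipp-l-w})--(\ref{ipp-l-w-w}) and the regularity bounds (\ref{control-l-w})--(\ref{control-l-w-w}) hold verbatim for $L^{\Wti}$ and $L^{\Wti\Wti}$, while Lemma~\ref{lem:regu-noise-as} is superseded by the standing assumption that $\Wti\in\cac^{\frac12-\ep}(\cb_{\eta,2p})$ for every $p\geq 1$ and small $\ep>0$.

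First I would record the a priori regularity of $\Yti$, the exact analogue of Lemma~\ref{lem:apriori}: $\Yti\in\cacha^{2\eta}(\cb_\infty)\cap\cac^0(\cb_\ga)$ and $K^{\Yti}:=\delha\Yti-L^{\Wti}f(\Yti)\in\cac_2^{\frac12+\eta}(\cb)$, which follows from standard estimates on the pathwise mild equation (\ref{eq-reg}); the argument of Appendix~B carries over \emph{mutatis mutandis}, the Burkholder--Davis--Gundy step being simply dropped since $\Wti$ has finite variation, so only the adaptation needs to be indicated. Next, mimicking Lemma~\ref{cor:def-z}, I would introduce the process $Z$ given by $Z_0=\psiti$ and
\[
(\delha Z)_{ts}=L^{\Wti}_{ts}(f(\Yti_s))+L^{\Wti\Wti}_{ts}(f(\Yti_s)\cdot f'(\Yti_s))+\Laha_{ts}\big(R^{\Yti}\big),
\]
and check that $R^{\Yti}\in\cac_3^\mu(\cb)$ for some $\mu>1$, so that $\Laha$ may be applied via Theorem~\ref{existence-laha}. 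Expanding $R^{\Yti}$ with the algebraic rule (\ref{rel-alg-prod}) exactly as in (\ref{expansion-r})--(\ref{defi:proc-n}) gives $R^{\Yti}_{tus}=L^{\Wti}_{tu}N^{\Yti}_{us}+L^{\Wti\Wti}_{tu}\der(f(\Yti)\cdot f'(\Yti))_{us}$, where $N^{\Yti}_{us}=\der(f(\Yti))_{us}-(\der\Wti)_{us}f(\Yti_s)f'(\Yti_s)$ is precisely the process already appearing in $J^{\Yti}$. The bounds (\ref{control-l-w})--(\ref{control-l-w-w}) for $\Wti$, the a priori controls above, the regularity of the auxiliary operator $L^{a\Wti}$ (coming from the Sobolev product estimates recalled in Appendix~A), and the hypothesis $\ga>\frac12$ then yield $N^{\Yti}\in\cac_2^{\frac12+\ep'}(\cb)$ for some small $\ep'>0$, whence $R^{\Yti}\in\cac_3^{1+\ep'}(\cb)$. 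Consequently $Z$ is well defined as an element of $\cacha^{2\eta}(\cb_\infty)\cap\cac^0(\cb_\ga)$, together with a bound of the form (\ref{bound-z}).

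It then remains to identify $\delha\Yti$ with $\delha Z$. Using the decomposition recorded just before the statement, $(\delha\Yti)_{ts}=L^{\Wti}_{ts}f(\Yti_s)+L^{\Wti\Wti}_{ts}(f(\Yti_s)\cdot f'(\Yti_s))+J^{\Yti}_{ts}$ with $\delha J^{\Yti}=R^{\Yti}$, so $\delha(Z-\Yti)=\Laha(R^{\Yti})-J^{\Yti}$. The contraction property (\ref{contraction-laha}) gives $\Laha(R^{\Yti})\in\cac_2^{\mu_1}(\cb)$ for some $\mu_1>1$, and since $\Wti$ is absolutely continuous, hence of finite variation, while the leading term of $N^{\Yti}$ cancels, the Riemann--Lebesgue integral defining $J^{\Yti}$ picks up a full extra power of $|t-s|$ over the H\"older exponent of $N^{\Yti}$, so that $J^{\Yti}\in\cac_2^{\mu_2}(\cb)$ for some $\mu_2>1$ as well. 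As $Z-\Yti$ is a genuine path one has $\delha(\delha(Z-\Yti))=0$, and the telescopic-sum argument of Proposition~\ref{prop:telesc} invoked at the end of the proof of Theorem~\ref{theo:identif} forces $\delha(Z-\Yti)=0$; together with $Z_0=\Yti_0=\psiti$ this yields $Z=\Yti$, hence (\ref{representation-ti}). Finally, the ``in particular'' claim is immediate, since the a priori-bound and patching argument producing Corollary~\ref{cor:contr-sol} uses only the decomposition (\ref{representation-ti}), the bounds (\ref{control-l-w})--(\ref{control-l-w-w}) and the contraction of $\Laha$, all of which are now available with $(\psi,W)$ replaced by $(\psiti,\Wti)$.

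The only genuine effort lies in transcribing the a priori estimates for $\Yti$ and in bookkeeping the H\"older exponents in the expansions of $R^{\Yti}$ and $J^{\Yti}$; conceptually nothing new happens, and the absolutely continuous setting is strictly more comfortable than the stochastic one treated above, since ordinary differential calculus replaces It\^o's formula and no Stratonovich/trace correction term intervenes.
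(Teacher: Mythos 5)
Your plan reproduces exactly the paper's argument: the paper itself only sketches this proof, observing that once the integration-by-parts formulas and bounds (\ref{control-l-w})--(\ref{control-l-w-w}) are transferred to $(L^{\Wti},L^{\Wti\Wti})$ and the decomposition $(\delha\Yti)_{ts}=L^{\Wti}_{ts}f(\Yti_s)+L^{\Wti\Wti}_{ts}(f(\Yti_s)f'(\Yti_s))+J^{\Yti}_{ts}$ is in hand, one simply reruns the identification procedure of Lemma \ref{cor:def-z} and Theorem \ref{theo:identif}. Your transcription of the a priori estimates, the definition of $Z$, the regularity of $R^{\Yti}$ and $J^{\Yti}$, and the telescopic-sum identification is the same route at essentially the same level of detail, so the proposal is correct.
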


\

With these identifications in hand, the proof of Theorem \ref{thm:continuity} becomes a matter of a standard rough-paths argument, and we only sketch out the main steps of the procedure (see e.g. the proof of \cite[Lemma 5.2]{RHE-glo} for further details on the computations).

\

\begin{proof}[Proof of Theorem \ref{thm:continuity}]

In order to compare $Y$ with $\Yti$, we can now rely on their respective decompositions (\ref{representation}) and (\ref{representation-ti}). By setting $g:=ff'$, we get that
\begin{multline}\label{decomp-contin}
\delha(Y-\Yti)_{ts}=\big\{ \big[L^W_{ts}-L^{\Wti}_{ts}\big] f(Y_s) +\big[L^{WW}_{ts}-L^{\Wti\Wti}_{ts}\big](g(Y_s)\big\}\\
+\big\{L^{\Wti}_{ts}\big[f(Y_s)-f(\Yti_s)\big]+L^{\Wti\Wti}_{ts} \big[g(Y_s)-g(\Yti_s)\big] \big\}+\Laha_{ts}\big( R^Y-R^{\Yti} \big),
\end{multline}
with a similar splitting for $R^Y-R^{\Yti}$ (based on the expansion (\ref{defi:proc-n})). Now, as in Lemma \ref{cor:def-z}, we consider the following appropriate topology:
$$
\cn[Y-\tilde{Y};\cq(I)]:=\cn[Y-\tilde{Y};\cacha^{2\eta}(I;\cb_\infty)]+\cn[Y-\tilde{Y};\cac^{0}(I;\cb_\ga)]
+\cn[K^Y-K^{\tilde{Y}};\cac_2^{\frac{1}{2}+\eta}(I;\cb)].
$$
By using the decomposition (\ref{decomp-contin}) and the bounds (\ref{contin-bound-1})-(\ref{contin-bound-2}), standard differential calculus shows that for any subinterval $I=[\ell_1,\ell_2]$ of $[0,T]$,
\begin{multline*}
\cn[Y-\tilde{Y};\cq(I)]\\
 \leq C_{W,\tilde{W},\psi,\tilde{\psi}} \lcl \norm{Y_{\ell_1}-\tilde{Y}_{\ell_1}}_{\cb_\ga}+\cn[W-\tilde{W};\cac^{\frac{1}{2}-\ep}(\cb_{\eta,2p})]+
\lln I\rrn^\la \cn[Y-\tilde{Y};\cq(I)]\rcl,
 \end{multline*}
for some constant $\la>0$. As in Corollary \ref{cor:contr-sol}, we can then rely on an elementary patching argument to reach the global bound (\ref{cont-ito}).

\end{proof}

\begin{remark}
The above strategy sheds new light on the classical Itô-Stratonovich correction phenomenon arising in the approximation of stochastic heat equations. Indeed, on the one hand, it emphasizes that the convergence of $\widetilde{Y}$ towards $Y$ reduces to the convergence of $(L^{\widetilde{W}},L^{\widetilde{W}\widetilde{W}})$ towards $(L^W,L^{WW})$, and on the other, continuous bounds such as (\ref{contin-bound-2}) clearly highlight the relevance of the Stratonovich interpretation of $L^{WW}$ in this context. In a way, the correction phenomenon is therefore more directly observed through the decomposition (\ref{defi:l-w-w}) of $L^{WW}$ as the sum of an Itô integral and a trace term.
\end{remark}

%%%%%%%%%%%%%%%%%%%%%%%%%%%%%%%%%%%%%%%%%%%%%%%%%%%%%%%%%%%%%%%%%%%%%%%%%%%%%%
%%%%%%%%%%%%%%%%%%%%%%%%%%%%%%%%%%%%%%%%%%%%%%%%%%%%%%%%%%%%%%%%%%%%%%%%%%%%%%%%%%%%%%%%%%%%%%

\section{Approximations in law}
\label{sec:approx-law}

We now aim to prove our approximation result, that is Theorem \ref{theo:approx}. Thus, from now on, we assume that the hypotheses in Theorem \ref{theo:approx} 
are all satisfied. Recall that the approximation processes involved in this statement, namely the Donsker and the Kac-Stroock approximations, 
have been specified in the Introduction (see (\ref{dons-intro}) and (\ref{ks-intro})), as well as the notations $\mathbf{W}$ and $\be^{n,\cdot}$. 
Besides, in this part of the paper we take $T=1$ for the sake of simplicity.

%%%%%%%%%%%%%%%%%%%%%%%%%%%%%%%%%%%%%%%%%%%%%%%%%%%%%%%%%%%%%%%%%%%%%%%%%%

\subsection{Preliminary results}
\label{sec:prel-res}

As a first step towards Theorem \ref{theo:approx}, we need to check that the processes we have constructed via $\mathbf{W}$ are indeed well-defined. 
To do so, we will make use of the following bound. 

\begin{lemma}\label{lem:hypercontra}
Fix $n\geq 1$. Let $X_1^{(n)},\ldots,X_n^{(n)}$
 be independent centered random variables with moments of any order and $f_n:\{1,\ldots,n\} \to \R$.
 Then for every  $r\ge 1$, there exists a constant $C_r$ which only depends on $r$
 such that
$$E\bigg[ \big| \sum_{i=1}^n f_n(i) X_i^{(n)} \big|^{2r} \bigg] \leq C_r   \bigg( \sum_{i=1}^n f_n(i)^2\bigg)^{r} \cdot \bigg( \sup_{1\leq i\leq n} E\big[ |X_i^{(n)}|^{2r} \big] \bigg).$$
\end{lemma}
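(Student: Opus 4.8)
The statement is a standard moment inequality for weighted sums of independent centered random variables with finite moments of all orders, and the natural route is via the Marcinkiewicz--Zygmund (or Burkholder--Rosenthal) inequality. First I would set $S:=\sum_{i=1}^n f_n(i) X_i^{(n)}$ and apply the Marcinkiewicz--Zygmund inequality to the independent centered summands $f_n(i)X_i^{(n)}$: for every $r\geq 1$ there is a universal constant $c_r$ with
\[
E\big[|S|^{2r}\big]\leq c_r\, E\Big[\Big(\sum_{i=1}^n f_n(i)^2 \big(X_i^{(n)}\big)^2\Big)^{r}\Big].
\]
Then I would normalise: writing $a_i:=f_n(i)^2/\sum_{j}f_n(j)^2$ so that $\sum_i a_i=1$, the inner sum equals $\big(\sum_j f_n(j)^2\big)\sum_i a_i \big(X_i^{(n)}\big)^2$, and by convexity of $x\mapsto x^r$ (Jensen applied to the probability weights $a_i$), $\big(\sum_i a_i (X_i^{(n)})^2\big)^r\leq \sum_i a_i (X_i^{(n)})^{2r}$. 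Taking expectations gives
\[
E\big[|S|^{2r}\big]\leq c_r\Big(\sum_{j=1}^n f_n(j)^2\Big)^{r}\sum_{i=1}^n a_i\, E\big[(X_i^{(n)})^{2r}\big]\leq c_r\Big(\sum_{j=1}^n f_n(j)^2\Big)^{r}\sup_{1\leq i\leq n}E\big[(X_i^{(n)})^{2r}\big],
\]
which is exactly the claim with $C_r=c_r$.

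\textbf{Remarks on the steps.} There is genuinely no hard step here; the only point requiring a little care is to invoke Marcinkiewicz--Zygmund in its $L^{2r}$ form for a finite sum of independent (not necessarily identically distributed) mean-zero variables, which is classical and applies since all moments are finite by hypothesis. An equally valid alternative, avoiding any reference to Marcinkiewicz--Zygmund, is to expand $E[|S|^{2r}]=\sum f_n(i_1)\cdots f_n(i_{2r})\,E[X_{i_1}^{(n)}\cdots X_{i_{2r}}^{(n)}]$ and use independence plus centering to kill every multi-index in which some index appears exactly once; each surviving term is controlled by Hölder's inequality by $\prod_\ell E[|X_{i_\ell}^{(n)}|^{2r}]^{1/(2r)}\leq \sup_i E[|X_i^{(n)}|^{2r}]$, and a combinatorial bookkeeping of the surviving multi-indices (grouping indices by multiplicity $\geq 2$) produces the factor $\big(\sum_i f_n(i)^2\big)^r$ up to a constant depending only on $r$. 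Either argument is routine; I would present the Marcinkiewicz--Zygmund one for brevity. The inequality will then be used (as in Proposition~\ref{prop:conver}) with $f_n(i)=\sqrt{\la_i}$-type weights to establish that $\mathbf{W}(X^\cdot)$ takes values in $\cb_{\eta,2p}$.
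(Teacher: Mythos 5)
Your proof is correct, but it rests on a different key inequality than the paper's. The paper deduces the lemma from Rosenthal's inequality (\cite[Thm. 3]{Rosenthal}), stated as Theorem \ref{prop:Rosenthal}: applying it to $Y_i=f_n(i)X_i^{(n)}$, one bounds $\sum_i E|Y_i|^{2r}$ by $\big(\sum_i f_n(i)^2\big)^r\sup_i E|X_i^{(n)}|^{2r}$ via the embedding $\ell^2\subset\ell^{2r}$, and $\big(\sum_i E|Y_i|^2\big)^r$ by the same quantity via Lyapunov's inequality $(E|X|^2)^r\le E|X|^{2r}$. You instead invoke Marcinkiewicz--Zygmund to pass to the square function and then use Jensen with the probability weights $a_i=f_n(i)^2/\sum_j f_n(j)^2$; every step is valid for all real $r\ge 1$ (the degenerate case $\sum_j f_n(j)^2=0$ being trivial), so this is a complete alternative proof of the same one-line flavour. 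The two routes are essentially interchangeable here --- Rosenthal's inequality hands you moment bounds on the individual summands directly, at the cost of handling the two terms in the max, while your route needs the normalisation-plus-Jensen step but uses the arguably better-known Marcinkiewicz--Zygmund inequality. One caveat on your second, combinatorial alternative (expanding $E[|S|^{2r}]$ and killing multi-indices with a singleton factor): that argument requires $2r$ to be an even integer, whereas the lemma is stated for every real $r\ge1$; since it is only offered as an aside and the applications in the paper use integer $r=pq$, this is not a gap in your main argument, but it is worth being aware that the expansion route proves a formally weaker statement.
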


This inequality can be easily deduced from the following result, which is
clear for $r=1$  and was proved by Rosenthal for $r>1$ (see \cite[Thm. 3]{Rosenthal}).

\begin{theorem}\label{prop:Rosenthal}
Let $Y_1,\ldots, Y_n$ be independent centered random variables
satisfying 
 $E\big[|Y_i|^{2r}\big]<\infty$, where $r\ge 1$. Then, there exists a constant
$C_r$ such that
$$ E\bigg[ \big|\sum_{i=1}^n Y_i \big|^{2r}\bigg] \le
C_r\,\max\Big\{\sum_{i=1}^n E|Y_i|^{2r},\,\bigg(\sum_{i=1}^n
E|Y_i|^{2}\bigg)^r\Big\}.$$
\end{theorem}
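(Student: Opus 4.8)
The plan is to treat the even-integer exponents first by a direct combinatorial expansion, and then bootstrap to an arbitrary real exponent by a truncation argument calibrated to the target quantity. Throughout write $\sigma^2:=\sum_{i=1}^nE[Y_i^2]$ and $T:=\sum_{i=1}^nE[|Y_i|^{2r}]$ for the two quantities appearing on the right-hand side; the asserted inequality is homogeneous of degree $2r$ in $(Y_i)$, which is why the truncation level and all constants below are dimensionally consistent.

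First I would settle the case $2r=2m$ with $m\in\N$. Here $E[|\sum_iY_i|^{2r}]=E[(\sum_iY_i)^{2m}]=\sum E[Y_{i_1}\cdots Y_{i_{2m}}]$, and by independence together with $E[Y_i]=0$ every term in which some index occurs with multiplicity one vanishes. Grouping the surviving terms by their multiplicity pattern $(a_1,\dots,a_k)$ with each $a_l\ge2$ and $\sum_la_l=2m$ (hence $k\le m$), each group is at most a combinatorial constant depending only on $m$ times $\prod_{l=1}^k\big(\sum_jE[|Y_j|^{a_l}]\big)$. For $a_l=2$ that factor is $\sigma^2$; for $2<a_l\le2m$, log-convexity of $a\mapsto\log E[|Y_j|^a]$ gives $E[|Y_j|^{a_l}]\le(E[Y_j^2])^{1-\phi_l}(E[|Y_j|^{2m}])^{\phi_l}$ with $\phi_l=\frac{a_l-2}{2m-2}$, and Hölder's inequality then yields $\sum_jE[|Y_j|^{a_l}]\le(\sigma^2)^{1-\phi_l}T^{\phi_l}$. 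Multiplying these bounds and using $\sum_l\phi_l=\frac{m-k}{m-1}$, the power of $\sigma^2$ works out to $\frac{m(k-1)}{m-1}$, so each group is bounded by $(\sigma^{2m})^{\frac{k-1}{m-1}}T^{\frac{m-k}{m-1}}$, which is $\le\sigma^{2m}+T$ by Young's inequality since the two exponents sum to $1$. As there are only finitely many patterns, this gives the integer case with a constant $C_m$.

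For general real $r\ge1$ I would truncate at the level $c:=T^{1/(2r)}$, splitting $Y_i=\tilde Y_i+Y_i''$ into a centred truncated and a centred tail part, $\tilde Y_i:=Y_i\one_{\{|Y_i|\le c\}}-E[Y_i\one_{\{|Y_i|\le c\}}]$ and $Y_i'':=Y_i\one_{\{|Y_i|>c\}}-E[Y_i\one_{\{|Y_i|>c\}}]$, so that $\sum_iY_i=\sum_i\tilde Y_i+\sum_iY_i''$ with both families independent and centred. Since $|\tilde Y_i|\le2c$ the $\tilde Y_i$ have all moments, so I apply the integer case with exponent $2M$, $M:=\lceil r\rceil$, then Jensen's inequality to descend to exponent $2r$; combined with $E[\tilde Y_i^2]\le E[Y_i^2]$ and $E[|\tilde Y_i|^{2M}]\le(2c)^{2M-2r}E[|\tilde Y_i|^{2r}]\le C_r\,c^{2M-2r}E[|Y_i|^{2r}]$, the choice $c=T^{1/(2r)}$ makes the excess term interpolate to exactly $T$ after raising to the power $r/M$, yielding $E[|\sum_i\tilde Y_i|^{2r}]\le C_r(\sigma^{2r}+T)$. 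For the tail part the recentring contribution is controlled by Markov, $|\sum_iE[Y_i\one_{\{|Y_i|>c\}}]|^{2r}\le(T/c^{2r-1})^{2r}=T$, while for $E[(\sum_i|Y_i|\one_{\{|Y_i|>c\}})^{2r}]$ I write $K$ for the random number of indices with $|Y_i|>c$ and use Hölder over this random set, $(\sum_i|Y_i|\one_{\{|Y_i|>c\}})^{2r}\le K^{2r-1}\sum_i|Y_i|^{2r}\one_{\{|Y_i|>c\}}$; taking expectations termwise and noting that on $\{|Y_j|>c\}$ one has $K=1+K_j$ with $K_j:=\sum_{i\neq j}\one_{\{|Y_i|>c\}}$ independent of $Y_j$, the estimate reduces to $\sum_jE[(1+K_j)^{2r-1}]\,E[|Y_j|^{2r}]$. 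Since $E[K_j]\le\sum_iP(|Y_i|>c)\le T/c^{2r}=1$, the variable $K_j$ is a sum of independent Bernoullis of total mean at most $1$, so $E[e^{K_j}]\le e^{e-1}$ and hence $E[(1+K_j)^{2r-1}]\le C_r$; this bounds the tail piece by $C_rT$, and adding the two parts gives the claim.

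The main obstacle is precisely this general-$r$ step: because only the moment of order $2r$ is assumed finite, a naive interpolation through the intermediate even integers is unavailable, and the truncation at $c=T^{1/(2r)}$ is what circumvents it, the decisive point on the tail being that $K_j$ has mean $\le1$. The integer case is pure bookkeeping. A more classical route to the same reduction would be to symmetrise the $Y_i$ and then insert Rademacher signs, so that Khintchine's inequality turns $E[|\sum_iY_i|^{2r}]$ into $E[(\sum_iY_i^2)^r]$ and the problem becomes the corresponding Rosenthal bound for the nonnegative independent variables $Y_i^2$, which is again handled by the same truncation scheme.
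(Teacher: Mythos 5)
Your argument is correct, but note that the paper does not prove this statement at all: it is quoted verbatim from Rosenthal's original article (\cite[Thm.\ 3]{Rosenthal}) and used as a black box to deduce Lemma \ref{lem:hypercontra}, so there is no in-paper proof to match. Your two-step route is a legitimate self-contained derivation: the even-integer case by multinomial expansion, Lyapunov interpolation of the intermediate moments between $E[Y_j^2]$ and $E[|Y_j|^{2m}]$, and Young's inequality on the resulting exponents (the bookkeeping $\sum_l\phi_l=\frac{m-k}{m-1}$ and the $\sigma^2$-power $\frac{m(k-1)}{m-1}$ check out; only the degenerate case $m=1$, where the interpolation formula divides by zero, should be set aside as trivial, along with $T=0$); and the passage to real $r\ge 1$ by truncating at $c=T^{1/(2r)}$, applying the integer case at level $2\lceil r\rceil$ plus Jensen to the bounded truncated part, and handling the tail via the counting variable $K_j$, whose mean is $\le 1$ by Markov and whose exponential moment is uniformly bounded since it is a sum of independent Bernoullis — this is the decisive point and it is handled correctly, as is the recentring term via $\bigl(T/c^{2r-1}\bigr)^{2r}=T$. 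This differs from Rosenthal's original functional-analytic proof (and from the martingale/good-$\lambda$ proofs common in the literature), and what it buys is elementarity: only independence, Hölder/Jensen and a Poisson-type tail bound are used, with constants depending on $r$ alone, which is exactly the uniformity in $n$ that the paper needs for Lemma \ref{lem:hypercontra}.
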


\

The transition from real-valued to $\cb_{\eta,2p}$-valued processes will be ensured by the following result.

\begin{proposition}\label{prop:conver}
Let $(X_k)_{k\geq 1}$ be a sequence of centered i.i.d. random variables
on some probability space $(\Omega,\mathcal{F},P)$. Assume that each $X_i$
 has moments of any order, and consider a sequence $(\la_k)_{k\geq 1}$
  of positive numbers such that $\sum_{k\geq 1} \la_k\,  k^{4\eta} < \infty$ for some (fixed) $\eta>0$. Then, for every $p,q\geq 1$, the random series of functions $\sum_k \sqrt{\la_k} X_k e_k$ converges in $L^{2pq}(\Omega,\cb_{\eta,2p})$ to an element $\mathbf{X}$ which satisfies
\begin{equation}\label{bound-x-gras}
E\big[ \|\mathbf{X}\|_{\cb_{\eta,2p}}^{2pq} \big]\leq C_{p,q,\la,\eta} E\big[ |X_1|^{2pq}\big],
\end{equation}
for some constant $C_{p,q,\la,\eta}$ which only depends on $p$, $q$
and $\sum_{k\geq 1} \la_k \,  k^{4\eta}$.
\end{proposition}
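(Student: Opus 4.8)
The strategy is to reduce the convergence of the infinite series $\sum_k \sqrt{\la_k} X_k e_k$ in $L^{2pq}(\Omega;\cb_{\eta,2p})$ to a Cauchy estimate on the partial sums, using the Banach-space-valued completeness together with an explicit moment bound obtained from Lemma \ref{lem:hypercontra}. The key point is that the $\cb_{\eta,2p}$-norm of a trigonometric polynomial can be controlled via the eigenvalues of $-\Delta$: by definition $\|\vp\|_{\cb_{\eta,2p}} = \|(-\Delta)^\eta \vp\|_{L^{2p}(0,1)}$, and for $\vp = \sum_k c_k e_k$ one has $(-\Delta)^\eta \vp = \sum_k (k\pi)^{2\eta} c_k e_k$. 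So the heart of the matter is to estimate $E[\|\sum_{k=M}^{N} \sqrt{\la_k}(k\pi)^{2\eta} X_k e_k\|_{L^{2p}(0,1)}^{2pq}]$ uniformly.

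\textbf{Main steps.} First I would fix $M<N$, set $f_N(k) := \sqrt{\la_k}(k\pi)^{2\eta}$ for $M\le k\le N$, and write $g_{M,N}(x) := \sum_{k=M}^N f_N(k) X_k e_k(x)$. For each fixed $x\in[0,1]$ the random variable $g_{M,N}(x)$ is a finite linear combination of independent centered variables, so Lemma \ref{lem:hypercontra} (applied with $r = pq$) gives
\begin{equation*}
E\big[|g_{M,N}(x)|^{2pq}\big] \le C_{pq} \Big(\sum_{k=M}^N f_N(k)^2 e_k(x)^2\Big)^{pq} \cdot \sup_k E\big[|X_k|^{2pq}\big].
\end{equation*}
Since $|e_k(x)| = |\sqrt2 \sin(k\pi x)| \le \sqrt 2$, we have $\sum_{k=M}^N f_N(k)^2 e_k(x)^2 \le 2\pi^{4\eta}\sum_{k=M}^N \la_k k^{4\eta}$, which is the tail of a convergent series and in particular bounded uniformly in $x$. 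Next I would integrate this pointwise bound over $x\in(0,1)$: by Fubini (or Minkowski's integral inequality, but here Fubini suffices since we already have a pointwise $L^{2pq}(\Omega)$ bound),
\begin{equation*}
E\big[\|g_{M,N}\|_{L^{2p}(0,1)}^{2p}\big] = E\Big[\int_0^1 |g_{M,N}(x)|^{2p}\,dx\Big],
\end{equation*}
and to get the $2pq$-th power of the norm I would apply Jensen's inequality with the probability measure $dx$ on $(0,1)$ to pass the exponent $q$ inside:
\begin{equation*}
\|g_{M,N}\|_{L^{2p}(0,1)}^{2pq} = \Big(\int_0^1 |g_{M,N}(x)|^{2p}\,dx\Big)^{q} \le \int_0^1 |g_{M,N}(x)|^{2pq}\,dx.
\end{equation*}
Taking expectations and using Fubini again together with the pointwise bound yields
\begin{equation*}
E\big[\|g_{M,N}\|_{\cb_{\eta,2p}}^{2pq}\big] \le C_{p,q}\,\big(2\pi^{4\eta}\big)^{pq}\Big(\sum_{k=M}^N \la_k k^{4\eta}\Big)^{pq}\, E\big[|X_1|^{2pq}\big],
\end{equation*}
where I used that the $X_k$ are i.i.d. Since $\sum_{k\ge1}\la_k k^{4\eta}<\infty$, the right-hand side tends to $0$ as $M,N\to\infty$; hence the partial sums $S_N := \sum_{k=1}^N \sqrt{\la_k} X_k e_k$ form a Cauchy sequence in the Banach space $L^{2pq}(\Omega;\cb_{\eta,2p})$ and converge to a limit $\mathbf{X}$. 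Taking $M=1$ in the displayed estimate (so the partial sums $S_N$ themselves are bounded), and passing to the limit $N\to\infty$ via Fatou's lemma, gives exactly \eqref{bound-x-gras} with $C_{p,q,\la,\eta} = C_{p,q}\,(2\pi^{4\eta})^{pq}\big(\sum_{k\ge1}\la_k k^{4\eta}\big)^{pq}$, which depends only on $p$, $q$ and $\sum_k \la_k k^{4\eta}$ as claimed.

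\textbf{Main obstacle.} The only genuinely delicate point is organizing the interchange of the expectation, the spatial integral, and the power $q$ so that the one-dimensional Rosenthal-type bound of Lemma \ref{lem:hypercontra} can be applied \emph{pointwise in $x$} and then assembled into a bound on the $\cb_{\eta,2p}$-norm; the Jensen step that moves the exponent $q$ inside the integral is what makes this clean, and it is crucial that we work with the $L^{2p}$-norm raised to a power that is an even integer multiple of the base exponent. Everything else — the fractional Sobolev characterization via eigenfunctions, the uniform bound $|e_k|\le\sqrt2$, and the tail estimate on $\sum \la_k k^{4\eta}$ — is routine. One should also note that the convergence is in particular unconditional/almost sure along a subsequence, but only the stated $L^{2pq}$-convergence and the moment bound are needed in the sequel.
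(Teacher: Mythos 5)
Your proof is correct and follows essentially the same route as the paper's: pass to the $L^{2p}(0,1)$-norm of the $(-\Delta)^\eta$-weighted tail sum, use Jensen to bring the exponent $q$ inside the spatial integral, apply the Rosenthal-type Lemma \ref{lem:hypercontra} pointwise in $x$, and bound $\sum_k \la_k k^{4\eta} e_k(x)^2$ uniformly via $\|e_k\|_{\cb_\infty}\le\sqrt2$. The only (immaterial) difference is that you carry the exact eigenvalue factor $(k\pi)^{2\eta}$ where the paper writes $k^{2\eta}$, which changes nothing beyond a constant.
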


\begin{proof}
Set $\mathbf{X}^n:=\sum_{k=1}^n \sqrt{\la_k} X_k e_k$ and observe
that $\| \mathbf{X}^m-\mathbf{X}^n
\|_{\cb_{\eta,2p}}=\|\mathbf{X}^{(m,n),\eta} \|_{L^{2p}(0,1)}$, where we
have set $\mathbf{X}^{(m,n),\eta}(\xi):=\sum_{k=n+1}^m k^{2\eta}
\sqrt{\la_k} X_k e_k(\xi)$. Then, by Jensen's inequality,
$$
E\big[\| \mathbf{X}^{(m,n),\eta} \|_{L^{2p}(0,1)}^{2pq}\big]=
 E\bigg[ \bigg( \int_0^1 d\xi \, |\mathbf{X}^{(m,n),\eta}(\xi)|^{2p}\bigg)^q \bigg]
  \leq  \int_0^1 d\xi \, E\big[|\mathbf{X}^{(m,n),\eta}(\xi)|^{2pq}\big],
$$
and thanks to Lemma \ref{lem:hypercontra}, we get
\begin{eqnarray}
E\big[\| \mathbf{X}^{(m,n),\eta} \|_{L^{2p}(0,1)}^{2pq}\big]&\leq & C_{p,q} E\big[|X_1|^{2pq}\big]
\int_0^1 d\xi \, \bigg( \sum_{k=n+1}^m \la_k \, k^{4\eta}\, e_k(\xi)^2 \bigg)^{pq} \nonumber\\
&\leq & C_{p,q} E\big[|X_1|^{2pq}\big] \bigg( \sum_{k=n+1}^m \la_k
\, k^{4\eta} \bigg)^{pq}\label{bound-interm}
\end{eqnarray}
due to the uniform bound $\| e_k\|_{\cb_\infty} \leq \sqrt{2}$. In particular, $E\big[\| \mathbf{X}^m-\mathbf{X}^n \|_{\cb_{\eta,2p}}^{2pq}\big]$ tends to 
zero as both $m$ and $n$ tend to infinity, so that $\mathbf{X}^n$ converges in $L^{2pq}(\Omega,\cb_{\eta,2p})$. 
The bound (\ref{bound-x-gras}) can of course be derived from (\ref{bound-interm}).
\end{proof}

In particular, due to Hypothesis \ref{hypo-noise-3}, we can conclude that $W^n=\mathbf{W}(S^{n,\cdot})$ and $W^n=\mathbf{W}(\theta^{n,\cdot})$ are indeed well-defined processes with values in $\cb_{\eta,2p}$. Let us now get a little bit closer to the assumptions of Theorem \ref{thm:continuity} 
by checking that in both cases, $W^n$ admits an absolutely continuous version.

\begin{lemma}\label{acDonsker}
For any fixed $n\geq 1$, both the Donsker approximation $W^n=\mathbf{W}(S^{n,\cdot})$ and the Kac-Stroock approximation $W^n=\mathbf{W}(\theta^{n,\cdot})$
have an absolutely continuous version with values in $\bepp$, for all $p\geq 1$.
\end{lemma}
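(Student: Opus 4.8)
The plan is to establish, for each fixed $n$, the absolute continuity of $t\mapsto W^n_t$ as a $\bepp$-valued path by differentiating the series representation term by term and controlling the resulting series in $L^{2p}(0,1)$ uniformly on $[0,1]$. Recall that $W^n_t=\mathbf{W}(X^{n,\cdot})_t=\sum_{k\geq 1}\sqrt{\la_k}\,X^{n,k}_t\,e_k$, where in the Donsker case $X^{n,k}=S^{n,k}$ is (pathwise) piecewise linear, hence absolutely continuous with derivative $\dot S^{n,k}_t=n^{1/2}Z_i^{(k)}$ on each interval $[(i-1)/n,i/n)$, and in the Kac-Stroock case $X^{n,k}=\theta^{n,k}$ with $\dot\theta^{n,k}_t=\sqrt{n}\,(-1)^{\zeta^{(k)}+N^{(k)}(nt)}$, which is bounded by $\sqrt{n}$ for every path. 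In both cases the scalar processes $t\mapsto X^{n,k}_t$ are (a.s.) absolutely continuous with derivatives that are, for each $k$, uniformly bounded on $[0,1]$ by a random constant.

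First I would fix a version of the derivative process by setting, formally, $\dot W^n_t:=\sum_{k\geq 1}\sqrt{\la_k}\,\dot X^{n,k}_t\,e_k$, and check that this series converges in $L^{2pq}(\Omega,\cb_{\eta,2p})$ for every $q\geq 1$, uniformly in $t\in[0,1]$. This is exactly the situation covered by Proposition \ref{prop:conver}: in the Donsker case, for each fixed $t$ the family $(\dot S^{n,k}_t)_{k\geq 1}$ is i.i.d.\ centered with moments of any order (it equals $n^{1/2}Z^{(k)}_{i(t)}$ with the same law for all $k$), so $\mathbf{W}(\dot S^{n,\cdot})_t$ is a well-defined element of $\bepp$ with $E[\|\mathbf{W}(\dot S^{n,\cdot})_t\|_{\bepp}^{2pq}]\leq C\,n^{pq}\,E[|Z_1|^{2pq}]$; likewise, in the Kac-Stroock case $(\dot\theta^{n,k}_t)_{k\geq 1}$ is i.i.d.\ centered, bounded, with $E[\|\mathbf{W}(\dot\theta^{n,\cdot})_t\|_{\bepp}^{2pq}]\leq C\,n^{pq}$. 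Applying the Garsia--Rodemich--Rumsey lemma (Lemma \ref{lem-grr}) to the increments, as in the proof of Lemma \ref{lem:regu-noise-as}, one obtains that (a.s.) the $\bepp$-valued process $t\mapsto\dot W^n_t$ admits a version with, say, $L^1([0,1];\bepp)$ paths; in fact in the Kac-Stroock case the uniform-in-$t$ bound even yields essentially bounded paths, and in the Donsker case the derivative is piecewise constant in $t$ with finitely many pieces.

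Next I would verify the fundamental-theorem-of-calculus identity $W^n_t-W^n_s=\int_s^t \dot W^n_u\,du$ in $\bepp$. Since $\int_s^t\dot W^n_u\,du=\sum_{k\geq 1}\sqrt{\la_k}\big(\int_s^t\dot X^{n,k}_u\,du\big)e_k=\sum_{k\geq 1}\sqrt{\la_k}(X^{n,k}_t-X^{n,k}_s)e_k=W^n_t-W^n_s$, the only point to justify is the interchange of the $\bepp$-valued Bochner integral with the infinite sum over $k$; this follows from dominated convergence once one knows the partial sums $\sum_{k=1}^m\sqrt{\la_k}\dot X^{n,k}_u e_k$ converge to $\dot W^n_u$ in $\bepp$ uniformly in $u$ (for the Kac-Stroock case) or in $L^1(du;\bepp)$ (general case), which is precisely the content of the previous step together with the tail estimate $(\sum_{k>m}\la_k k^{4\eta})^{pq}\to0$ coming from Hypothesis \ref{hypo-noise-3}. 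This identity exhibits $W^n$ as a (pathwise) primitive of an integrable $\bepp$-valued function, hence an absolutely continuous version.

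The main obstacle is the uniform-in-$t$ control of the derivative series and the attendant measurability/path-regularity bookkeeping needed to legitimately pass from the scalar absolute continuity of each $X^{n,k}$ to absolute continuity of the Hilbert-space-valued sum: one must be careful that the null set outside which the derivative series converges can be chosen independently of $t$, which is where the Garsia--Rodemich--Rumsey argument (as opposed to a crude Fubini) is essential, exactly as in Lemma \ref{lem:regu-noise-as}. The Donsker case is the more delicate of the two only in that $Z_1$ is merely $L^{2pq}$ for all $q$ rather than bounded, so the moment bounds, rather than a sup bound, must carry the argument; the Kac-Stroock case is comparatively soft because $|\dot\theta^{n,k}_t|\leq\sqrt{n}$ deterministically.
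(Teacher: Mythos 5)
Your overall plan for the Kac--Stroock case --- define $\dot W^n_t:=\mathbf{W}(\dot\theta^{n,\cdot})_t$ via Proposition \ref{prop:conver}, verify the Bochner-integral identity $W^n_t=\int_0^t\dot W^n_s\,ds$ by exchanging the sum over $k$ with the time integral, and use the tail of $\sum_k\la_k k^{4\eta}$ to justify the exchange --- is exactly the paper's. (For the Donsker case you are working much too hard: since the grid $\{i/n\}$ is the same for every $k$, the $\bepp$-valued process $\mathbf{W}(S^{n,\cdot})$ is itself piecewise linear in $t$, and absolute continuity is immediate; no derivative series is needed.) However, the step where you secure the path regularity is wrong as stated, and you have the roles of the two tools reversed. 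You propose to apply Garsia--Rodemich--Rumsey to the increments of $\dot W^n$ to obtain a version with $L^1([0,1];\bepp)$ paths, calling this ``essential'' as opposed to ``a crude Fubini''. But Lemma \ref{lem-grr} produces H\"older-type bounds from increment moment estimates, and the Kac--Stroock derivative is a pure jump process: $\dot\theta^n_t-\dot\theta^n_s$ equals $0$ or $\pm2\sqrt n$, the latter with probability comparable to $n|t-s|$ for $|t-s|$ small, so $E[|\dot\theta^n_t-\dot\theta^n_s|^{2p}]\asymp n^{p}\min(1,n|t-s|)$; the exponent of $|t-s|$ is only $1$ and no Kolmogorov/GRR criterion applies to $\dot W^n$. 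Likewise the a.s.\ \emph{uniform-in-$u$} convergence of the partial sums $\sum_{k\le m}\sqrt{\la_k}\,\dot\theta^{n,k}_u e_k$ that you invoke for the dominated convergence step is not established by anything you wrote.

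The correct bookkeeping is the opposite of what you describe. The Bochner integrability of $\dot W^n$ \emph{is} obtained by the crude Fubini argument: $E\big[\int_0^1\|\dot W^n_s\|_{\bepp}\,ds\big]\le C_p\int_0^1\big(E[|\dot\theta^{n}_s|^{2p}]\big)^{1/2p}ds<\infty$ by Proposition \ref{prop:conver}. The exchange of $\sum_k$ and $\int_0^t$ then holds in $L^{2p}(\Omega)$ for each \emph{fixed} $t$, with an error controlled by $\sum_{k>N}\la_k k^{4\eta}\to0$. What remains --- and what you gloss over --- is upgrading ``for each $t$, a.s.'' to ``a.s., for all $t$'', and \emph{this} is where a continuity argument enters: one applies Kolmogorov/GRR to $W^n$ itself (not to $\dot W^n$), using $E[\|\delta(\mathbf{W}(\theta^{n,\cdot}))_{ts}\|_{\bepp}^{2pq}]\le C\,n^{pq}|t-s|^{2pq}$, to obtain a continuous version of $W^n$; since $t\mapsto\int_0^t\dot W^n_s\,ds$ is also continuous, the two processes are indistinguishable and the identity holds for all $t$ on a single null set. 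With these corrections your argument closes.
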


\begin{proof}
Since the (deterministic) approximation grid for $S^{n,k}$ does not depend on $k$, it is easily seen that
$$\mathbf{W}(S^{n,\cdot})_t=\mathbf{W}(S^{n,\cdot})_{\frac{i}{n}}+n\cdot \Big(t-\frac{i}{n}\Big)\cdot \big\{\mathbf{W}(S^{n,\cdot})_{\frac{i+1}{n}}-\mathbf{W}(S^{n,\cdot})_{\frac{i}{n}}\big\} \qquad \text{if} \ t\in \Big[\frac{i}{n},\frac{i+1}{n}\Big].$$
In particular, $\mathbf{W}(S^{n,\cdot})$ is a piecewise linear process (with values in $\cb_{\eta,2p}$) and accordingly it is absolutely continuous.

As far as the Kac-Stroock approximation is concerned, first we can
see that it has a continuous version with values in $\bepp$. Indeed,
applying Proposition \ref{prop:conver},
\begin{align*}
E\lc\normg{\delta(\mathbf{W}(\theta^{n,\cdot}))_{ts}}_{\bepp}^{2pq}\rc&\le
C\,
E\Big[\big|\delta(\theta^n)_{ts}\big|^{2pq}\Big]\\
&= C\, E\Big[\Big|\int_s^t
\sqrt{n}\,(-1)^{\zeta+N(nu)}du\Big|^{2pq}\Big]\le C n^{pq}|t-s|^{2pq}.
\end{align*}
For the sake of clarity, we
will also denote by $\mathbf{W}(\theta^{n,\cdot})$ this continuous
version.  To prove the existence of an absolutely continuous
version, we will see that  with probability 1,
\begin{equation}\label{repr-kac}
\mathbf{W}(\theta^{n,\cdot})_t=\int_0^t \mathbf{W}(\dot{\theta}^{n,\cdot})_s
\, ds,\quad \text{ for any } t\in [0,1],
\end{equation}
where $\dot{\theta}^n_t:=\sqrt{n} \cdot (-1)^{\zeta+N(nt)}$. Indeed, thanks to Proposition \ref{prop:conver},
 $\mathbf{W}(\dot{\theta}^{n,\cdot})_t$ is well-defined for every $t\in [0,1]$ as
 an element of $L^{2p}(\Omega,\cb_{\eta,2p})$ and
$$ E\bigg[  \int_0^1\!
\|\mathbf{W}(\dot{\theta}^{n,\cdot})_s \|_{\cb_{\eta,2p}} ds \bigg]\le
\int_0^1\!\!\! \lp E\big[\|\mathbf{W}(\dot{\theta}^{n,\cdot})_s
\|_{\cb_{\eta,2p}}^{2p}\big]\rp^{\frac1{2p}} ds\ \leq \ C_p \int_0^1
\!\!\!\lp E\big[|\dot{\theta}^{n}_s |^{2p} \big] \rp^{\frac1{2p}}ds\ <
\ \infty.
$$
As a consequence $\mathbf{W}(\dot{\theta}^{n,\cdot})$ is (a.s.)
Bochner-integrable. Moreover,  for each $t\in [0,1]$,
$$\mathbf{W}(\theta^{n,\cdot})_t=\lim_{N\to \infty} \sum_{k=1}^N \sqrt{\la_k}\, \theta^{n,k}_t e_k=
\lim_{N\to \infty} \int_0^t \Big( \sum_{k=1}^N \sqrt{\la_k}\,
\dot{\theta}^{n,k}_s e_k \Big) ds \qquad \text{in} \
L^{2p}(\Omega,\cb_{\eta,2p})$$ and
\begin{multline*}
E\bigg[\Big\|\int_0^t \!\!\!\Big( \sum_{k=1}^N \sqrt{\la_k}\,
\dot{\theta}^{n,k}_s e_k \Big) ds -\int_0^t
\mathbf{W}(\dot{\theta}^{n,\cdot})_s \, ds \Big\|^{2p}_{\cb_{\eta,2p}}
\bigg] \leq \int_0^1 E\Big[ \big\| \sum_{k=N+1}^\infty\!\!\!
\sqrt{\la_k}\, \dot{\theta}^{n,k}_s e_k \big\|^{2p}_{\cb_{\eta,2p}}
\Big]ds \\
\le
C_{pq}\Big(\sup_{s\in[0,1]}E\big[|\dot\theta^n_s|^{2pq}\big]\Big)\Big(\sum_{k=N+1}^{\infty}\la_k\,k^{4\eta}\Big),
\end{multline*}
by  similar arguments as in the proof of Proposition
\ref{prop:conver}. Since the last expression tends to $0$ as $N\to
\infty$, we obtain that for each $t\in [0,1]$
$$
\mathbf{W}(\theta^{n,\cdot})_t=\int_0^t \mathbf{W}(\dot{\theta}^{n,\cdot})_s
\, ds\quad \text {a.s.}$$
Thus,  since 
$\{\mathbf{W}(\theta^{n,\cdot})_t,\,t\in [0,1]\}$ and $\{\int_0^t
\mathbf{W}(\dot{\theta}^{n,\cdot})_s \, ds,\,t\in [0,1]\}$ are both
continuous processes, we can conclude that
$$
P\Big\{\mathbf{W}(\theta^{n,\cdot})_t=\int_0^t
\mathbf{W}(\dot{\theta}^{n,\cdot})_s \, ds,\; \forall
t\in[0,1]\Big\}=1.$$
\end{proof}

%%%%%%%%%%%%%%%%%%%%%%%%%%%%%%%%%%%%%%%%%%%%%%%%%%%%%%%%%%%%%%%%%%%%%%%%%%%%%%%%%%%%%

\subsection{A general convergence criterion}
\label{sec:criterion}

One of our key ingredients to prove Theorem \ref{theo:approx} via Theorem \ref{thm:continuity} lies in the following statement, which puts forward sufficient conditions for an approximation of the noise (defined on the same probability space) to converge with respect to the topology involved in (\ref{cont-ito}).

\begin{proposition}\label{prop:crit}
Let $(\be^n)_{n\geq 1}$ be a sequence of centered processes and $\be$ a Brownian motion,
 all defined on a same probability space $(\Omega,\mathcal{F},P)$, and such that the following two
  conditions are satisfied:
\begin{itemize}
\item[(i)] For every integer $p\geq 1$, there exists a constant
$C_p$ such that for all $s,t\in [0,1]$ and all $n\geq 1$,
$$E\big[|\be^n_t-\be^n_s|^{2p}\big] \leq C_p \lln t-s\rrn^{p}.$$

\item[(ii)] For every integer $p\geq 1$, there exists a constant
$C_p$ such that for all $n\geq 1$,
$$\sup_{t\in [0,1]} E\big[ |\be^n_t-\be_t |^{2p}\big] \leq C_p n^{-\nu p},$$
for some fixed parameter $\nu >0$.
\end{itemize}
Then if we consider independent copies $(\be^{n,k})_{k\geq 1}$ (resp. $(\be^k)_{k\geq 1}$) of $\be^n$ (resp. $\be$) on a same probability space, we have that, for any integer $p\geq 1$ and any $\ep>0$,
$$\cn[\mathbf{W}(\be^{n,\cdot})-\mathbf{W}(\be^\cdot);\cac^{\frac12-\ep}(\cb_{\eta,2p})] \underset{n\to \infty}{\longrightarrow }0 \qquad \text{a.s.}$$
\end{proposition}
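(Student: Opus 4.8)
The plan is to deduce the statement from the $\cb_{\eta,2p}$-valued moment estimate of Proposition \ref{prop:conver}, the Garsia--Rodemich--Rumsey lemma \ref{lem-grr}, and a Borel--Cantelli argument; the crucial input is that the approximation error $\be^n-\be$ decays at a polynomial rate in $L^{2p}$. To begin, I would set $R^n:=\be^n-\be$ and, on the larger probability space carrying the independent copies, $R^{n,k}:=\be^{n,k}-\be^k$. Since $(\be^{n,k},\be^k)$ is distributed as $(\be^n,\be)$ for each $k$, independently over $k$, the family $(R^{n,k})_{k\geq 1}$ consists, for each fixed $n$, of i.i.d. copies of $R^n$; and by linearity of $\mathbf{W}$ we have $\mathbf{W}(\be^{n,\cdot})-\mathbf{W}(\be^\cdot)=\mathbf{W}(R^{n,\cdot})$, so it suffices to show $\cn[\mathbf{W}(R^{n,\cdot});\cac^{\frac12-\ep}(\cb_{\eta,2p})]\to 0$ a.s. Moreover $R^n$ is centered with moments of all orders, and combining $(i)$, $(ii)$ with the moment formula for Brownian motion (and $|R^n_t-R^n_s|\leq|\be^n_t-\be_t|+|\be^n_s-\be_s|$) one gets, for every integer $m\geq 1$ and all $s,t\in[0,1]$,
$$E\big[|R^n_t-R^n_s|^{2m}\big]\leq C_m\,|t-s|^m\qquad\text{and}\qquad E\big[|R^n_t-R^n_s|^{2m}\big]\leq C_m\,n^{-\nu m}.$$
Since both right-hand sides bound the same quantity, so does any geometric mean of them: for every $\mu\in[0,1]$,
$$E\big[|R^n_t-R^n_s|^{2m}\big]\leq C_m\,|t-s|^{m(1-\mu)}\,n^{-\nu m\mu}.$$

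Next I would fix $\ep\in(0,\tfrac12)$ and the integer $p\geq 1$, set $\mu:=\ep$, and choose an integer $q$ large enough that $pq>\max\{1/\ep,\,1/(\nu\ep)\}$. Applying Proposition \ref{prop:conver} to the i.i.d. variables $(R^{n,k}_t-R^{n,k}_s)_{k\geq 1}$ and then the previous bound (with $m=pq$) yields
$$E\big[\|(\der\mathbf{W}(R^{n,\cdot}))_{ts}\|_{\cb_{\eta,2p}}^{2pq}\big]\leq C_{p,q,\la,\eta}\,E\big[|R^n_t-R^n_s|^{2pq}\big]\leq C\,n^{-\nu\ep pq}\,|t-s|^{pq(1-\ep)}.$$
The exponent of $|t-s|$ equals $2pq\cdot\beta$ with $\beta:=\tfrac{1-\ep}{2}$, and the condition $pq>1/\ep$ ensures both $2pq\,\beta>1$ and $\beta-\tfrac{1}{2pq}>\tfrac12-\ep$. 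Hence the (quantitative form of the) Garsia--Rodemich--Rumsey lemma \ref{lem-grr}, applied exactly as in the proof of Lemma \ref{lem:regu-noise-as}, provides a continuous version of $\mathbf{W}(R^{n,\cdot})$ satisfying
$$E\Big[\cn[\mathbf{W}(R^{n,\cdot});\cac^{\frac12-\ep}(\cb_{\eta,2p})]^{2pq}\Big]\leq C'\,n^{-\nu\ep pq}.$$

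To conclude, since $\nu\ep pq>1$ by the choice of $q$, Markov's inequality gives, for every $\delta>0$,
$$\sum_{n\geq 1}P\Big(\cn[\mathbf{W}(R^{n,\cdot});\cac^{\frac12-\ep}(\cb_{\eta,2p})]>\delta\Big)\leq\delta^{-2pq}\sum_{n\geq 1}C'\,n^{-\nu\ep pq}<\infty,$$
so Borel--Cantelli gives $\limsup_n\cn[\mathbf{W}(R^{n,\cdot});\cac^{\frac12-\ep}(\cb_{\eta,2p})]\leq\delta$ a.s. for every $\delta>0$, and letting $\delta$ run over a sequence decreasing to $0$ finishes the proof.

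The argument is essentially routine once organised this way; the only point requiring genuine care is the bookkeeping in the middle paragraph, i.e. choosing the interpolation weight $\mu$ and the moment order $q$ so that, simultaneously, $|t-s|^{pq(1-\mu)}$ retains enough time-regularity for GRR to output the H\"older exponent $\tfrac12-\ep$ and $n^{-\nu\mu pq}$ decays fast enough for the series in the last display to converge. Both are possible precisely because $p$ is fixed whereas $q$ may be taken arbitrarily large, which is what turns the polynomial rate in hypothesis $(ii)$ into a summable tail.
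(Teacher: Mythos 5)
Your proof is correct and follows essentially the same route as the paper: an interpolated moment bound for $\be^n-\be$ (the paper's Lemma \ref{lem:estim}, which you obtain by a geometric-mean interpolation instead of the paper's case distinction on $|t-s|$ versus $n^{-\nu}$), then Proposition \ref{prop:conver}, the Garsia--Rodemich--Rumsey Lemma \ref{lem-grr}, and Markov plus Borel--Cantelli. The only other cosmetic difference is that you use a fixed threshold $\delta$ in Borel--Cantelli where the paper uses the $n$-dependent threshold $n^{-\ep\nu/4}$; both work.
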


\smallskip

Let us first see how to combine the above conditions (i) and (ii) so as to exhibit convergent bounds in Hölder topology.

\begin{lemma}\label{lem:estim}
Under the hypotheses of Proposition \ref{prop:crit}, for all
integers $n,p\geq 1$, all $\ep\in (0,1)$ and $s<t\in [0,1]$, one has
$$E\big[ |\der(\be^n-\be)_{ts}|^{2p} \big] \leq C_p \frac{|t-s|^{(1-\ep)p}}{n^{p\ep\nu}},$$
for some constant $C_p$ which only depends on $p$.
\end{lemma}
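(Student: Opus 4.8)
The plan is to obtain the estimate by interpolating between two elementary moment bounds for the increment $\der(\be^n-\be)_{ts}=(\be^n_t-\be_t)-(\be^n_s-\be_s)$: one reflecting its time-regularity and the other the rate at which $\be^n$ approaches $\be$. Both rearrangements of this increment are legitimate here since, by assumption, $\be^n$ and $\be$ live on the same probability space.

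First I would establish the \emph{regularity bound}. Writing $\der(\be^n-\be)_{ts}=(\be^n_t-\be^n_s)-(\be_t-\be_s)$, Minkowski's inequality in $L^{2p}(\Omega)$ combined with assumption (i) and the Gaussian moment estimate $E[|\be_t-\be_s|^{2p}]\leq c_p\lln t-s\rrn^p$ (valid because $\be$ is a Brownian motion) gives
\begin{equation*}
E\big[|\der(\be^n-\be)_{ts}|^{2p}\big]\leq C_p\lln t-s\rrn^p .
\end{equation*}
Next I would establish the \emph{rate bound}: applying Minkowski's inequality directly to the other splitting $\der(\be^n-\be)_{ts}=(\be^n_t-\be_t)-(\be^n_s-\be_s)$ and invoking assumption (ii) yields
\begin{equation*}
E\big[|\der(\be^n-\be)_{ts}|^{2p}\big]\leq C_p\, n^{-\nu p} .
\end{equation*}

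Finally, for $\ep\in(0,1)$ I would conclude via the trivial observation that if $0\leq a\leq b$ and $0\leq a\leq c$ then $a=a^{1-\ep}a^{\ep}\leq b^{1-\ep}c^{\ep}$; applying this with $b=C_p\lln t-s\rrn^p$ and $c=C_p\, n^{-\nu p}$ produces precisely the announced bound $C_p\lln t-s\rrn^{(1-\ep)p}\, n^{-p\ep\nu}$. I do not anticipate any genuine obstacle in this argument; the only mild care needed is in the bookkeeping of the generic constants $C_p$, which must absorb the Gaussian constant $c_p$ as well as the factors $2^{2p}$ coming from the two triangle inequalities.
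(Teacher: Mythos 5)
Your proof is correct. The only place it departs from the paper is in the final step that combines the two hypotheses: the paper argues by a dichotomy on the size of $|t-s|$ relative to $n^{-\nu}$ --- if $|t-s|\leq n^{-\nu}$ it uses only the regularity bound $C_p|t-s|^p$ and converts the surplus factor $|t-s|^{\ep p}$ into $n^{-\nu\ep p}$, while if $|t-s|>n^{-\nu}$ it uses only the rate bound $C_p n^{-\nu p}$ and converts $n^{-\nu(1-\ep)p}$ into $|t-s|^{(1-\ep)p}$. You instead establish both uniform bounds first and then invoke the elementary interpolation $a\leq b^{1-\ep}c^{\ep}$ valid whenever $0\leq a\leq b$ and $0\leq a\leq c$. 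The two arguments rest on exactly the same inputs (condition (i) plus Gaussian scaling for the first bound, condition (ii) for the second), and your version is marginally cleaner in that it avoids the case split and makes the interpolative nature of the lemma explicit; the paper's version has the minor advantage of never needing both hypotheses at the same pair $(s,t)$. Either way the constant bookkeeping is as you describe, and there is no gap.
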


\begin{proof}
If $|t-s|\leq n^{-\nu}$, then due to the condition (i), it holds that
$$
E\big[ |\der(\be^n-\be)_{ts}|^{2p} \big] \leq  C_p \big\{ E\big[ |\be^n_t-\be^n_s|^{2p} \big] +E\big[ \be_t-\be_s|^{2p} \big] \big\} \leq C_p \, |t-s|^p  \leq C_p \frac{|t-s|^{(1-\ep)p}}{n^{\nu \ep p}}.
$$
On the other hand, if $|t-s| >n^{-\nu}$, one has, thanks to the condition (ii),
$$
E\big[ |\der(\be^n-\be)_{ts}|^{2p} \big] \leq C_p \sup_{t\in [0,1]} E\big[ |\be^n_t-\be_t|^{2p}\big] \leq C_p \, n^{-\nu p} \leq C_p \frac{|t-s|^{(1-\ep)p}}{n^{\nu \ep}}.
$$
\end{proof}

\begin{proof}[Proof of Proposition \ref{prop:crit}]
By using successively Proposition \ref{prop:conver} and Lemma \ref{lem:estim}, we get, for any $q\geq 1$,
$$E\Big[ \| \der\big( \mathbf{W}(\be^{n,\cdot})-\mathbf{W}(\be^.) \big)_{ts}\|_{\cb_{\eta,2p}}^{2pq} \Big]\leq C_{p,q,\eta} \, E\Big[ |\der(\be^n-\be)_{ts}|^{2pq} \Big] \leq C_{p,q,\eta} \frac{|t-s|^{(1-\ep)pq}}{n^{pq\nu \ep}}.$$
We are thus in a position to apply the Garsia-Rodemich-Rumsey Lemma \ref{lem-grr} (with $\der^\ast=\der$) and assert that, for $q$ large enough,
\bean
\lefteqn{E\Big[ \cn[\mathbf{W}(\be^{n,\cdot})-\mathbf{W}(\be^\cdot);\cac^{\frac{1}{2}-\ep}(\cb_{\eta,2p})]^{2pq}\Big]}\\
 &\leq & C_{p,q,\eta} \iint_{[0,T]^2}  \frac{E\Big[ \| \der\big( \mathbf{W}(\be^{n,\cdot})-\mathbf{W}(\be^\cdot) \big)_{ts}\|_{\cb_{\eta,2p}}^{2pq} \Big]}{\lln t-s \rrn^{2pq(\frac{1}{2}-\ep)+2}} \, dsdt \\
&\leq & C_{p,q,\eta}\, n^{-\ep pq\nu} \iint_{[0,T]^2} \lln t-s\rrn^{pq\ep-2} dsdt \ \leq \ C_{p,q,\eta}\, n^{-\ep pq\nu}.
\eean
As a result, it holds that
$$P\big( \cn[\mathbf{W}(\be^{n,\cdot})-\mathbf{W}(\be^\cdot);\cac^{\frac{1}{2}-\ep}(\cb_{\eta,2p})]>n^{-\ep \nu/4} \big)\leq C_{p,q,\eta} \, n^{-\ep pq\nu/2},$$
which, thanks to the Borell-Cantelli Lemma, leads us to the conclusion, that is
$$\cn[\mathbf{W}(\be^{n,\cdot})-\mathbf{W}(\be^\cdot);\cac^{\frac{1}{2}-\ep}(\cb_{\eta,2p})] \to 0 \qquad \text{a.s.}$$ as $n$ tends to infinity.
\end{proof}

\medskip

\noindent {\bf{Example:}} 
As an immediate illustration of Proposition
\ref{prop:crit}, let us consider here the \emph{Wong-Zakai}
approximation of a given noise $W$ satisfying Hypothesis
\ref{hypo-noise-3}. Precisely, set
$$W^n_t:=W_{\frac{i}{n}}+n\cdot \Big( t-\frac{i}{n} \Big) \cdot \big\{W_{\frac{i+1}{n}}-W_{\frac{i}{n}} \big\} \quad \text{for} \ t\in \Big[\frac{i}{n},\frac{i+1}{n}\Big],$$
and denote by $Y^n$ the solution of the equation
$$Y^n_t=S_t\psi+\int_0^t S_{t-u}(f(Y^n_u) \cdot dW^n_u),$$
understood in the classical Riemann-Lebesgue sense. Note that $W^n$ can be equivalently described as follows: with the expansion (\ref{repr-sum}) of $W$ in mind, 
i.e. $W=\mathbf{W}(\be^\cdot)$, we have that $W^n=\mathbf{W}(\be^{n,\cdot})$, where, for each $k\geq 1$, $\be^{n,k}$ stands for the linear interpolation of $\be^k$ 
with mesh $\frac{1}{n}$. Therefore, it suffices to check that the conditions (i) and (ii) in Proposition \ref{prop:crit} are satisfied by $\be^n:=\be^{n,1}$, 
which is a matter of elementary computations (it can be also seen as a particular case of the forthcoming Proposition \ref{tightDonsker}). 

Together with Theorem \ref{thm:continuity}, we retrieve the following almost sure approximation result:
\begin{proposition}
Under the hypotheses of Theorem \ref{thm:continuity}, let $Y^n$ be
the Wong-Zakai approximation of (\ref{equa-mild}) with mesh
$\frac{1}{n}$ and initial condition $\psi$. Then, as $n\to \infty$,
one has $\cn[Y-Y^n;\cac^0(\cb_\ga)] \to 0$ a.s.
\end{proposition}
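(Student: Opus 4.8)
The plan is to combine the almost sure continuity result of Theorem~\ref{thm:continuity} with the convergence criterion of Proposition~\ref{prop:crit}, after checking that the Wong-Zakai interpolation fits into the framework of the latter. More precisely, recall that with the expansion (\ref{repr-sum}) in mind we have $W=\mathbf{W}(\be^\cdot)$, and that the Wong-Zakai approximation with mesh $\frac1n$ can be written as $W^n=\mathbf{W}(\be^{n,\cdot})$, where $\be^{n,k}$ is the piecewise-linear interpolation of $\be^k$ on the grid $\{\frac in: 0\le i\le n\}$. Since the interpolation grid does not depend on $k$, this representation is legitimate (one argues exactly as in Lemma~\ref{acDonsker}); in particular each $W^n$ is a piecewise-linear, hence absolutely continuous, process with values in $\cb_{\eta,2p}$ for every $p\ge1$, so that Theorem~\ref{thm:continuity} applies with $\Wti=W^n$ and $\psiti=\psi$.

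\smallskip

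\noindent
The next step is to verify conditions (i) and (ii) of Proposition~\ref{prop:crit} for $\be^n:=\be^{n,1}$, the linear interpolation of a single Brownian motion $\be$. For (i), if $s<t$ lie in the same subinterval $[\frac{i-1}{n},\frac in]$ then $(\der\be^n)_{ts}=n(t-s)(\be_{i/n}-\be_{(i-1)/n})$, so $E[|(\der\be^n)_{ts}|^{2p}]\le C_p\, n^{2p}|t-s|^{2p}\, n^{-p}\le C_p|t-s|^p$ (using $n|t-s|\le1$); for $s,t$ in different subintervals one splits $(\der\be^n)_{ts}$ into the contributions on the endpoint intervals plus a genuine increment of $\be$ on the intermediate grid points, and each piece is controlled by $C_p|t-s|^p$ in the same way. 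For (ii), one uses $|\be^n_t-\be_t|\le \sup_{|u-v|\le 1/n}|\be_u-\be_v|$ on the relevant subinterval, so $E[|\be^n_t-\be_t|^{2p}]\le C_p n^{-p}$, giving the required bound with $\nu=1$. These are the elementary computations alluded to in the excerpt (and are subsumed by the forthcoming Proposition~\ref{tightDonsker}).

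\smallskip

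\noindent
Once (i)–(ii) hold, Proposition~\ref{prop:crit} yields, on a common probability space carrying independent copies $(\be^{n,k})_{k\ge1}$ and $(\be^k)_{k\ge1}$,
\begin{equation*}
\cn[W^n-W;\cac^{\frac12-\ep}(\cb_{\eta,2p})]=\cn[\mathbf{W}(\be^{n,\cdot})-\mathbf{W}(\be^\cdot);\cac^{\frac12-\ep}(\cb_{\eta,2p})]\xrightarrow[n\to\infty]{}0 \quad\text{a.s.}
\end{equation*}
for every $p\ge1$ and $\ep>0$. Now apply Theorem~\ref{thm:continuity} with the choice of $\ep>0$ and $p\ge1$ provided there: since $\psiti=\psi$, the bracketed factor in (\ref{cont-ito}) reduces to $\cn[W-W^n;\cac^{\frac12-\ep}(\cb_{\eta,2p})]$, and the prefactor $F_{\ep,p}(\norm{\psi}_{\cb_\ga},\norm{\psi}_{\cb_\ga},\cn[W;\cac^{\frac12-\ep}(\cb_{\eta,2p})],\cn[W^n;\cac^{\frac12-\ep}(\cb_{\eta,2p})])$ stays almost surely bounded in $n$, because $\cn[W^n;\cac^{\frac12-\ep}(\cb_{\eta,2p})]$ converges (hence is bounded) and $F_{\ep,p}$ is bounded on bounded sets. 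Therefore $\cn[Y-Y^n;\cac^0(\cb_\ga)]\to0$ almost surely, which is the claim.

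\smallskip

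\noindent
The only mildly delicate point is the bookkeeping for condition (i) when $s$ and $t$ straddle several grid intervals — one must check that the interpolation increment, the genuine Brownian increment on the middle block, and the two boundary contributions all combine to give the clean $|t-s|^p$ bound uniformly in $n$. Everything else is a direct plug-in of the two main results already established.
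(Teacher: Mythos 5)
Your proposal is correct and follows exactly the route the paper takes: represent the Wong--Zakai interpolation as $W^n=\mathbf{W}(\be^{n,\cdot})$ with $\be^{n,k}$ the piecewise-linear interpolation of $\be^k$, verify conditions (i) and (ii) of Proposition \ref{prop:crit} for a single interpolated Brownian motion (the paper leaves these as "elementary computations" or defers to Proposition \ref{tightDonsker}, and your explicit estimates, including the $\nu=1$ rate in (ii), are the right ones), and then feed the resulting almost sure convergence $\cn[W^n-W;\cac^{\frac12-\ep}(\cb_{\eta,2p})]\to 0$ into Theorem \ref{thm:continuity}, noting that no Skorokhod embedding is needed since the approximation already lives on the same probability space as $W$. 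Your observation that the prefactor $F_{\ep,p}$ stays bounded because $\cn[W^n;\cac^{\frac12-\ep}(\cb_{\eta,2p})]$ is a.s.\ bounded in $n$ is the correct (and only) additional point needed to close the argument.
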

This almost sure result in a non-linear situation is closely related to those of \cite{Brze-Flandoli} or \cite{Bally-Millet-Sanz}, 
where Wong-Zakaï approximations for some parabolic type equations have been considered. We also note that convergence in law for this type of 
approximations in the framework of stochastic evolution equations has been studied in \cite{Tessitore-Zabczyk-JEE2006}.

%In fact, the former paper studies the almost sure 
%convergence of Wong-Zakai type approximations for stochastic evolution equation in $\mathbb{R}^d$ driven by a finite-dimensional Wiener process, 
%while the latter deals with convergence in law for the Wong-Zakai approximations for the same type of equations and covering the case of bounded 
%domains with smooth boundary.  

\

\

Now, let us turn to the proof of the weak approximation results of Theorem \ref{theo:approx}, and which successively involve the Donsker approximation $\be^n=S^n$ and the Kac-Stroock approximation $\be^n=\theta^n$. In both cases, we wish to exploit the criterion of Proposition \ref{prop:crit}, which naturally leads us to the following 2-step procedure:

\medskip

\noindent \textbf{Step 1}: Show that Condition (i) is satisfied, i.e., $\sup_n E\big[|\be^n_t-\be^n_s|^{2p}\big] \leq C_p \lln t-s\rrn^{p}$.

\smallskip

\noindent \textbf{Step 2}: Find a probability space $(\bar{\Omega},\bar{\mathcal{F}},\bar{P})$, a sequence $\bar{\be}^n$ and a 
Brownian motion $\bar{\be}$, both defined on $(\bar{\Omega},\bar{\mathcal{F}},\bar{P})$, 
such that $\bbe^n\sim \be^n$ and $\sup_{t\in [0,1]} \bar{E}\big[ |\bbe^n_t-\bbe_t |^{2p}\big] \leq C_p n^{-\nu p}$ for some fixed parameter $\nu >0$.

\medskip

Once these two conditions have been checked, the proof of the weak
convergence $Y^n \to Y$ in $\cac^0(\cb_\ga)$ becomes a
straightforward consequence of Theorem \ref{thm:continuity} and
Proposition \ref{prop:crit}, since $\mathbf{W}(\bbe^{n,\cdot}) \sim
\mathbf{W}(\be^{n,\cdot})$ and accordingly, if $\bar{Y}^n$ denotes
the solution of (\ref{regu-equa}) associated with
 $\bar{W}^n:=\mathbf{W}(\bbe^{n,\cdot})$, it holds that $\bar{Y}^n \sim Y^n$.

Note that for both approximations $S^n$ and $\theta^n$, the result
 in \textbf{Step 2} will be derived from a Skorokhod embedding argument
  (see \cite{Skorokhod}). In the Donsker situation (Proposition \ref{mateixespai-Donsker}),
  this relies on a classical strategy towards the celebrated invariance principles
  (see \cite[Section 5.3]{morters-peres}).
 In the Kac-Stroock situation (Proposition \ref{mateixespai-Kac}), we will take advantage of an identification result due to Griego, 
Heath and Ruiz-Moncayo (see \cite{Griego-Heath-Ruiz}).

%%%%%%%%%%%%%%%%%%%%%%%%%%%%%%%%%%%%%%%%%%%%%%%%%%%%%%%%%%%%

\subsection{Donsker approximation }
\label{Donsker}

Here, we proceed to tackle the above 2-step procedure for the Donsker approximation $S^n$. 

\

\noindent
\textbf{Step 1 (Donsker case):}

\begin{proposition}\label{tightDonsker}
For every $p\geq 1$, there exists a positive constant $C_{p}$ such that, for all $0\leq s<t\leq 1$,
\begin{equation}\label{desigmom}
\sup_{n\in \N} E\big[|S^{n}_t-S^{n}_s  |^{2p}\big]\le
C_{p}\abs{t-s}^{p}.
\end{equation}
\end{proposition}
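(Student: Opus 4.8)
The plan is to establish the moment bound \eqref{desigmom} by exploiting the explicit piecewise-linear structure of the Donsker path \eqref{dons-intro} together with the classical moment estimates for sums of i.i.d. centered random variables. Recall that $S^n_t = n^{-1/2}\{\sum_{j=1}^{i-1} Z_j + n(t-(i-1)/n)Z_i\}$ for $t\in[(i-1)/n,i/n)$. Since the $(Z_j)$ have mean zero, unit variance and moments of all orders, Rosenthal's inequality (Theorem \ref{prop:Rosenthal}) applied with $Y_j=Z_j$ gives $E[|\sum_{j=1}^m Z_j|^{2p}]\le C_p\, m^p$ for every $m\ge 0$ and every $p\ge 1$. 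This is the only probabilistic input needed; the remainder is a deterministic case analysis on the relative position of $s$ and $t$ with respect to the grid $\{i/n\}$.

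First I would reduce to the case where $s$ and $t$ lie in the \emph{same} or in \emph{adjacent} subintervals, handling separately the bulk contribution. Write $s\in[(i-1)/n, i/n)$ and $t\in[(j-1)/n, j/n)$ with $i\le j$. Then
\[
S^n_t-S^n_s = n^{-1/2}\Bigl\{\sum_{\ell=i}^{j-1} Z_\ell - n\Bigl(s-\tfrac{i-1}{n}\Bigr)Z_i + n\Bigl(t-\tfrac{j-1}{n}\Bigr)Z_j\Bigr\},
\]
(with the obvious modification when $i=j$, in which case $S^n_t-S^n_s=n^{1/2}(t-s)Z_i$). I would split this into the "full-increment" part $n^{-1/2}\sum_{\ell=i}^{j-1}Z_\ell$ and the two "boundary" corrections involving $Z_i$ and $Z_j$ with coefficients bounded by $n^{1/2}$ times the local mesh. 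For the full-increment part, Rosenthal gives $E[|n^{-1/2}\sum_{\ell=i}^{j-1}Z_\ell|^{2p}]\le C_p\, ((j-i)/n)^p$, and since $(j-i)/n \le t-s + 2/n$ we must argue that either $|t-s|\ge 1/n$ (so $(j-i)/n \le 3|t-s|$ and we are done) or $|t-s|<1/n$, in which case $i$ and $j$ differ by at most one and the full-increment sum has at most one term, so its $2p$-th moment is $\le C_p\, n^{-p}\le C_p|t-s|^{p}$ is \emph{false} in general — here one instead uses that the single term is multiplied by a coefficient controlled by $n(t-s)$. This is precisely why the boundary terms must be treated jointly with the short-interval case: each boundary correction is of the form $n^{1/2}\delta\, Z_k$ with $0\le \delta\le 1/n$, so its $2p$-th moment is $C_p\, n^{p}\delta^{2p}$; when $|t-s|<1/n$ one has $\delta \le |t-s|$ (after combining the two boundary pieces and the single bulk term, whose total linear coefficient telescopes to exactly $t-s$ when $i=j$), giving $C_p n^p |t-s|^{2p}\le C_p |t-s|^p$ since $n|t-s|<1$.

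The step I expect to be the mild technical obstacle is keeping the bookkeeping of coefficients clean enough to see the uniform-in-$n$ bound, in particular the sub-case $|t-s|<1/n$ with $i=j-1$, where the increment is $n^{-1/2}Z_i$-free but involves $n(t-(j-1)/n)Z_j - n(s-(i-1)/n)Z_i$ with both linear coefficients summing (in absolute value) to at most $n|t-s|\le 1$; using independence and $E|Z|^{2p}<\infty$, the $2p$-th moment is $\le C_p (n|t-s|)^{2p}\le C_p (n|t-s|)^p \le C_p n^p |t-s|^p$, which is \emph{not} $\le C_p|t-s|^p$ — so one must be careful to extract the factor $(n|t-s|)^{2p}$ rather than $(n|t-s|)^p$ from the pure boundary terms, which is legitimate since those coefficients are $\le n|t-s|\le 1$ and raising a number in $[0,1]$ to a higher power only decreases it, giving $(n|t-s|)^{2p}\le (n|t-s|)^p\cdot 1$; combined with the bulk term of at most one summand contributing $C_p n^{-p}$, and $n^{-p}\le |t-s|^p$ only when $|t-s|\ge 1/n$, one sees the bulk single term must actually be absorbed into the boundary analysis by regrouping — i.e. when $j=i+1$ one writes the increment as $n^{1/2}[(t-\tfrac{i}{n})Z_{i+1}+(\tfrac{i}{n}-s)Z_i]$ directly, with both coefficients nonnegative and summing to $n^{1/2}(t-s)$. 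Once this regrouping is made, Jensen or direct expansion gives the clean bound $C_p|t-s|^p$. I would finish by collecting the three contributions via the elementary inequality $(a+b+c)^{2p}\le 3^{2p-1}(a^{2p}+b^{2p}+c^{2p})$ and noting $(j-i)/n\le 3|t-s|$ whenever $|t-s|\ge 1/n$, which yields \eqref{desigmom} with a constant depending only on $p$ and on $E[|Z_1|^{2p}]$.
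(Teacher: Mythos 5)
Your argument is correct, but it takes a more pedestrian route than the paper. The paper avoids all case analysis by writing the increment as a single weighted sum,
$S^n_t-S^n_s=n^{1/2}\sum_{i=1}^n a_i Z_i$ with $a_i=\int_s^t \mathbf{1}_{[\frac{i-1}{n},\frac{i}{n}]}(u)\,du$, applying Lemma \ref{lem:hypercontra} once to get the bound $C_p\,n^p\,(\sum_i a_i^2)^p$, and then using the elementary inequality $\sum_i a_i^2\leq (\max_i a_i)(\sum_i a_i)\leq \frac{1}{n}\,(t-s)$, which cancels the factor $n^p$ in one stroke. Your decomposition into bulk and boundary pieces is really the same weighted sum read off the grid (the bulk weights equal $1/n$, the boundary weights are the fractional overlaps), but because you estimate the pieces separately you are forced into the dichotomy $|t-s|\gtrless 1/n$ and, in the short-increment case, into the regrouping $n^{1/2}[(\frac{i}{n}-s)Z_i+(t-\frac{i}{n})Z_{i+1}]$ to avoid the lossy bound $(n|t-s|)^p$ you correctly flag. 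All the steps you end up with are valid — in particular the key observations that the coefficients sum to $t-s$ and are each at most $1/n$, so that $(n\delta)^{2p}\leq (n\delta)^p$ — and the constant depends only on $p$ and $E[|Z_1|^{2p}]$ as claimed. What the paper's formulation buys is exactly the removal of this bookkeeping: the two facts you use case by case ($\sum_i a_i=t-s$ and $\max_i a_i\leq 1/n$) are combined once and for all in the inequality $\sum_i a_i^2\leq(\max_i a_i)(\sum_i a_i)$, which handles all relative positions of $s$ and $t$ simultaneously. Your version, while longer, makes the mechanism (why the $n^{1/2}$ normalization is exactly compensated) more transparent; either write-up is acceptable, though if you keep yours you should excise the exploratory false starts and present only the corrected estimates.
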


\begin{proof}
First, note that $S^{n}$ can also be expressed as
$$S_t^{n}=n^{1/2}
\sum_{i=1}^n \Big( \int_0^t \mathbf{1}_{[\frac{i-1}{n},\frac{i}{n}]}
(u) \, du\Big) Z_i.$$ Then, by Lemma \ref{lem:hypercontra}, we have
\begin{multline*}
%\label{proo-dons}
E\big[|S^{n}_t-S^{n}_s  |^{2p}\big] = n^p E\bigg[ \bigg|
\sum_{i=1}^n
\Big( \int_s^t \mathbf{1}_{[\frac{i-1}{n},\frac{i}{n}]} (u) \, du\Big) Z_i \bigg|^{2p} \bigg]\\
\leq C_p\, n^p E\big[ |Z_1|^{2p}\big] \bigg( \sum_{i=1}^n \Big(
\int_s^t \mathbf{1}_{[\frac{i-1}{n},\frac{i}{n}]}(u) \, du \Big)^2
\bigg)^p\\
\le C_p \, n^p\Big(\max_{i=1,\ldots,n}\Big\{\int_s^t
\mathbf{1}_{[\frac{i-1}{n},\frac{i}{n}]}(u) \, du
\Big\}\Big)^p\Big(\sum_{i=1}^n\int_s^t
\mathbf{1}_{[\frac{i-1}{n},\frac{i}{n}]}(u) \, du \Big)^p\le C_p
|t-s|^p.
\end{multline*}
%If $\frac{k}{n} \leq s <t \leq \frac{k+1}{n}$ for some $k\in \{0,\ldots,n-1\}$, then
%$$\sum_{i=1}^n \Big( \int_s^t \mathbf{1}_{[\frac{i}{n},\frac{i+1}{n}]}(u) \, du \Big)^2=|t-s|^2 \leq |t-s| \cdot n^{-1},$$
%while if $\frac{k}{n}\leq s < \frac{k+1}{n} < \ldots < \frac{l}{n} \leq t < \frac{l+1}{n}$,
%$$\sum_{i=1}^n \Big( \int_s^t \mathbf{1}_{[\frac{i}{n},\frac{i+1}{n}]}(u) \, du \Big)^2=\Big( \frac{k+1}{n}-s \Big)^2+\Big( \frac{l-k-1}{n}\Big)\cdot  n^{-1}+\Big( t-\frac{l}{n} \Big)^2 \leq 3 |t-s|  n^{-1}.$$
%Going back to (\ref{proo-dons}), we deduce the uniform bound (\ref{desigmom}).
\end{proof}

\noindent
\textbf{Step 2 (Donsker case):}
\begin{proposition}\label{mateixespai-Donsker}
Let $(Z_i)_{i\in \N}$ be a sequence of i.i.d.
centered random variables with unit variance. Then, there exists a probability space $(\bar
\oom,\bar{\mathcal F},\bar P)$,  a Brownian motion
$\bbe$ defined on it and, for each $n\geq 1$,  a family of independent random variables
$(\bar Z^{(n)}_i)_{i=1,\ldots, n}$ with the same law as
$Z_i$, such that the following is satisfied. Set
$$\bar S_t^n:=n^{-1/2}\lbcl \sum_{j=1}^{i-1} \bar Z_j^{(n)}+\frac{t-(i-1)/n}{1/n}\, \bar
Z_i^{(n)}\rbcl\quad\text{ if }\,
t\in\lbc\frac{i-1}{n},\frac{i}{n}\rbc,\quad \text{with
}\,i\in\{1,\ldots,n\}.$$ Then, for every integer $p\geq 1$,
$$\sup_{t\in[0,1]}E\big[\abs{\bbe(t)-\bar S_t^n}^{2p}\big]\le C_{p}
n^{-p/4}.$$
\end{proposition}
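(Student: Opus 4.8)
The plan is to produce the required coupling via a Skorokhod embedding, following the classical route to invariance principles with moment control (see \cite{Skorokhod} and \cite[Section~5.3]{morters-peres}). The decisive point is that the statement only asks for a \emph{single} Brownian motion $\bbe$, not depending on $n$, while the embedded random walk may be reconstructed afresh for each $n$; this is exactly what makes a Brownian rescaling permissible.

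Concretely, I would first set up $(\bar\oom,\bar{\mathcal F},\bar P)$ carrying a Brownian motion $\bbe$ together with, for each $n$, an independent source of auxiliary randomness. Fixing $n$, the process $B^{(n)}_u:=\sqrt n\,\bbe(u/n)$, $u\ge0$, is again a Brownian motion, and applying the Skorokhod embedding theorem to the law of $Z_1$ (centered, of unit variance and with moments of every order) furnishes stopping times $0=\tau^{(n)}_0\le\tau^{(n)}_1\le\cdots\le\tau^{(n)}_n$ whose increments $\tau^{(n)}_i-\tau^{(n)}_{i-1}$ are i.i.d.\ with mean $E[Z_1^2]=1$ and finite moments of every order, and such that $\bar Z^{(n)}_i:=B^{(n)}(\tau^{(n)}_i)-B^{(n)}(\tau^{(n)}_{i-1})$, $i=1,\dots,n$, are i.i.d.\ with the law of $Z_1$. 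Building $\bar S^n$ out of these $\bar Z^{(n)}_i$ as in the statement, Brownian scaling yields $\bar S^n_{i/n}=n^{-1/2}B^{(n)}(\tau^{(n)}_i)=\bbe\big(\tau^{(n)}_i/n\big)$ for every $i\in\{0,\dots,n\}$.

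For a fixed $t\in[\tfrac in,\tfrac{i+1}n)$ with $i\in\{0,\dots,n-1\}$ (the remaining value $t=1$ falling under the same estimate with the interpolation term omitted), I would then split
\[
\bbe(t)-\bar S^n_t=\big[\bbe(t)-\bbe(\tfrac in)\big]+\big[\bbe(\tfrac in)-\bbe(\tfrac{\tau^{(n)}_i}{n})\big]+\big[\bar S^n_{i/n}-\bar S^n_t\big].
\]
The first bracket has $2p$-th moment $\le C_p n^{-p}$ by the Gaussian increment bound, and so does the third, since $\bar S^n$ is affine on $[\tfrac in,\tfrac{i+1}n]$ with increment $n^{-1/2}\bar Z^{(n)}_{i+1}$. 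The real work sits in the middle bracket, whose $2p$-th moment is $n^{-p}\,E\big[|B^{(n)}(i)-B^{(n)}(\tau^{(n)}_i)|^{2p}\big]$. One cannot condition directly on the time gap $|\tau^{(n)}_i-i|$, since $\tau^{(n)}_i$ is a stopping time \emph{for} $B^{(n)}$; instead I would truncate at a level $\delta>0$. On $\{|\tau^{(n)}_i-i|\le\delta\}$ the increment is dominated by the oscillation of $B^{(n)}$ over an interval of length $\delta$ around $i$, with $2p$-th moment $\le C_p\delta^p$ (reflection principle and scaling). On the complement, Cauchy--Schwarz combined with $E\big[|B^{(n)}(i)-B^{(n)}(\tau^{(n)}_i)|^{4p}\big]\le C_pn^{2p}$ and $P\big(|\tau^{(n)}_i-i|>\delta\big)\le C_pn^p\delta^{-2p}$ — both consequences of Rosenthal's inequality (Theorem~\ref{prop:Rosenthal}) applied to the pertinent sums of $i\le n$ i.i.d.\ variables, namely $\tau^{(n)}_i-i=\sum_{j\le i}(\tau^{(n)}_j-\tau^{(n)}_{j-1}-1)$ and $B^{(n)}(\tau^{(n)}_i)=\sum_{j\le i}\bar Z^{(n)}_j$ — gives $C_p\,n^{3p/2}\delta^{-p}$. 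Hence $E\big[|B^{(n)}(i)-B^{(n)}(\tau^{(n)}_i)|^{2p}\big]\le C_p\big(\delta^p+n^{3p/2}\delta^{-p}\big)$, and optimizing at $\delta=n^{3/4}$ produces the bound $C_pn^{3p/4}$, i.e.\ $E\big[|\bbe(\tfrac in)-\bbe(\tfrac{\tau^{(n)}_i}{n})|^{2p}\big]\le C_pn^{-p/4}$. Summing the three contributions yields $\sup_{t\in[0,1]}E[|\bbe(t)-\bar S^n_t|^{2p}]\le C_pn^{-p/4}$, which is the claim (this is precisely the bound feeding condition~(ii) of Proposition~\ref{prop:crit} with $\nu=1/4$).

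The main obstacle is exactly this middle estimate — comparing $\bbe$ at the deterministic time $i/n$ with its value at the nearby but path-dependent Skorokhod time $\tau^{(n)}_i/n$ — and the truncation trick is what resolves the dependence. Everything else is routine: Gaussian maximal inequalities and Rosenthal-type moment bounds for i.i.d.\ sums, tools already present in the paper (Theorem~\ref{prop:Rosenthal}, and the estimate behind Lemma~\ref{lem:hypercontra}). I would also note that this argument is lossy, producing $n^{-p/4}$ where a sharper treatment would reach a better rate, but it is amply sufficient for the subsequent application of Proposition~\ref{prop:crit}.
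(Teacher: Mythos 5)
Your proposal is correct and follows essentially the same route as the paper: Skorokhod embedding of the walk into the single Brownian motion $\bbe$, truncation of the random time discrepancy at level $n^{-1/4}$ (in $\bbe$'s time scale, i.e.\ $\delta=n^{3/4}$ in your rescaled clock), the Brownian maximal inequality on the good event, and Cauchy--Schwarz plus Rosenthal-type moment bounds (Lemma \ref{lem:hypercontra}) on the bad event. The only difference is cosmetic: you embed the unrescaled walk into $B^{(n)}_u=\sqrt n\,\bbe(u/n)$ and rescale back, whereas the paper embeds the walk rescaled by $n^{-1/2}$ directly into $\bbe$, which is the same construction after Brownian scaling.
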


\begin{proof}
As mentioned earlier, it is based on a general Skorokhod embedding theorem (see \cite[p. 163]{Skorokhod}), which, in our particular situation, can be stated as follows : there exists a probability space
$(\bar \oom,\bar{\mathcal F},\bar P)$, a Brownian motion $\bbe$ defined on it and, for each $n\in\N$, a sequence
$\{\tau_i^{(n)}\}_{i= 1,\dots,n}$ of independent and positive random
variables such that the random vector
$$\lbp\bbe\big(\tau_1^{(n)}\big),\bbe\big(\tau_1^{(n)}+\tau_2^{(n)}\big),\ldots,\bbe\big(\taun_1+\cdots+\taun_n\big)\rbp$$
has the same law as
$$\lbp
\frac{1}{\sqrt{n}}Z_1,\frac{1}{\sqrt{n}}\big(Z_1+Z_2\big),\ldots,\frac{1}{\sqrt{n}}\big(Z_1+\cdots+
Z_n\big)\rbp.$$
Moreover, it holds that $E\big[\taun_i\big]=E\big[(
Z_1/{\sqrt{n}})^2\big]=\frac{1}{n}$ and
$$E\big[ |\taun_i|^m\big]\le
C_mE\Big[\Big(\frac1{\sqrt{n}}Z_i\Big)^{2m}\Big]\le
\frac{C_m}{n^m},\quad\text{for any }\,m\in\N.$$
Set $T_0^n:=0$ and $T_i^{(n)}:=\sum_{j=1}^i\taun_j$ for $i\ge 1$. With this notation, we can infer that
$$\bbe\big(\ten_i\big)-\bbe\big(\ten_{i-1}\big)\sim\frac1{\sqrt{n}}
Z_i.$$
We define now
$$\bar
Z_i\pn=\sqrt{n}\big\{ \bbe\big(\ten_i\big)-\bbe\big(\ten_{i-1}\big)\big\} \sim
Z_i$$
and
$$\bar S_t^n=n^{-1/2}\lbcl \sum_{j=1}^{i-1} \bar Z_j \pn+\frac{t-(i-1)/n}{1/n}\, \bar
Z_i\pn\rbcl\quad\text{ if }\,
t\in\lbc\frac{i-1}{n},\frac{i}{n}\rbc.$$
Observe that, if $t\in\lbc\frac{i-1}{n},\frac{i}{n}\rbc$,
$$\bar S_t^n=\bbe\big(\ten_{i-1}\big)+\frac{t-(i-1)/n}{1/n}\, \Big\{ \bbe\big(\ten_i\big)-\bbe\big(\ten_{i-1}\big)\Big\},$$
and hence, for $ t\in\lbc\frac{i-1}{n},\frac{i}{n}\rbc$, we have that
\bean
E\Big[\absg{\bbe(t)-\bar S_t^n}^{2p}\Big]&\le &
C_{p}E\Big[\absg{\bbe(t)-\bbe\big(\ten_{i-1}\big)}^{2p}\Big]+C_{p}E\Big[ \absg{\frac1{\sqrt{n}}Z_i}^{2p}\Big]\\
& \leq&
C_{p}E\Big[\absg{\bbe(t)-\bbe\big(\ten_{i-1}\big)}^{2p}\Big]+C_{p}n^{-p}.
\eean
Thus, we only need to bound the first term in the latter expression, and to this end, we will use the following decomposition:
$$E\Big[\absg{\bbe(t)-\bbe\big(\ten_{i-1}\big)}^{2p}\Big]=A_1^n+A_2^n,$$
with
$$A_1^n=E\Big[\absg{\bbe(t)-\bbe\big(\ten_{i-1}\big)}^{2p}\mathbf{1}_{\{|t-\ten_{i-1}|\le n^{-1/4}\}}\Big]
$$
and
$$A_2^n=E\Big[\absg{\bbe(t)-\bbe\big(\ten_{i-1}\big)}^{2p}\mathbf{1}_{\{|t-\ten_{i-1}|> n^{-1/4}\}}\Big].
$$
On the one hand, the maximal inequality for Brownian motion yields
\begin{eqnarray}
A_1^n\le E\Big[\max_{s\in [(t-n^{-1/4})\vee
0,\,t]}\abs{\bbe(s)-\bbe(t)}^{2p}\Big]+E\Big[\max_{s\in
[t,\,(t+n^{-1/4})\wedge 1]}\abs{\bbe(s)-\bbe(t)}^{2p}\Big]\nonumber\\
\le 2\,E\Big[\max_{h\in [0,n^{-1/4}]}\abs{\bbe(h)}^{2p}\Big]\le
C_{p} E\big[\abs{\bbe(n^{-1/4})}^{2p}\big]\le C_{p} n^{-p/4}.\label{fix-4}
\end{eqnarray}
On the other hand, by Cauchy-Schwarz inequality, we have
\beq\label{A2}
A_2^n\le C_{p}\Big\{
E\big[\bbe\big(\ten_{i-1}\big)^{4p}\big]+E\big[\bbe\big(t\big)^{4p}\big]\Big\}^{1/2}\Big\{
P\big(
\absg{t-\ten_{i-1}}>n^{-1/4}\big)\Big\}^{1/2}
\eeq
Note that, by Lemma \ref{lem:hypercontra},
$$
E\big[\bbe\big(\ten_{i-1}\big)^{4p}\big]=E\Big[
\Big(\frac1{\sqrt{n}}\big(Z_1+\ldots+Z_i\big)\Big)^{4p}\Big]
\le C_p n^{-2p} i^{2p}\le
C_{p}.
$$
Thus, in order to estimate the term $A_2^n$, we only need to study the
probability appearing in (\ref{A2}). To do so, observe first that since
$t\in [\frac{i-1}{n},\frac{i}{n}]$, we have, for $n$ such that
$\frac12\,n^{-1/4}>\frac1{n}$ (that is, for $n\ge 3$),
\begin{equation}\label{ref-donsk-1}
 P\big(
\absg{t-\ten_{i-1}}>n^{-1/4}\big)
\leq
P\Big(
\absg{\ten_{i-1}-\frac{i-1}{n}}>\frac{1}{2}n^{-1/4}\Big).
\end{equation}
Then, using again Lemma \ref{lem:hypercontra}, we get
\begin{eqnarray}
 P\Big(
\absg{\ten_{i-1}-\frac{i-1}{n}}>\frac{1}{2}n^{-1/4}\Big)&\le &
C_{p}\,n^{p/2} E\bigg[\lbp\sum_{j=1}^{i-1}\Big\{\taun_j-\frac{1}{n}\Big\}\rbp^{2p}\bigg]\nonumber\\
&\le & C_{p}\, n^{p/2}i^{p}n^{-2p}\ \le \ C_{p} n^{-p/2}.\label{ref-donsk-2}
\end{eqnarray}
Therefore, $A_2^n\le C_{p}\, n^{-p/4}$, which concludes the proof.
\end{proof}

%%%%%%%%%%%%%%%%%%%%%%%%%%%%%%%%%%%%%%%%%%%%%%%%%%%%%%%%%%%%%%%%%%%%%%%%%%%%%%%%%%%%

\subsection{Kac-Stroock approximation}
\label{Kac}

Along the same lines as in the Donsker case, we proceed now to analyze the Kac-Stroock approximations based on $\theta^n$.

\

\noindent
\textbf{Step 1 (Kac-Stroock case):}
\begin{proposition}\label{tightKac}
For every integer $p\geq 1$, there exists a positive constant
$C_{p}$ such that, for all $0\leq s<t\leq 1$,
\begin{equation}\label{desigmomKac}
\sup_{n\in \N}E\big[  |\theta^{n}_t-\theta^n_s |^{2p}\big]\le
C_{p}\abs{t-s}^{p}.
\end{equation}
\end{proposition}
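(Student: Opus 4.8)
The plan is to exploit the explicit integral representation of the Kac-Stroock process together with the sign-valued nature of the integrand $(-1)^{\zeta+N(ns)}$. Writing $\theta^n_t - \theta^n_s = \sqrt{n}\int_s^t (-1)^{\zeta+N(nu)}\, du$, I would first expand the $2p$-th moment as
\[
E\big[|\theta^n_t-\theta^n_s|^{2p}\big] = n^p \int_{[s,t]^{2p}} E\Big[\prod_{i=1}^{2p} (-1)^{\zeta+N(nu_i)}\Big]\, du_1\cdots du_{2p}.
\]
Since $\zeta$ is an independent fair Bernoulli sign and appears to an even power $2p$ inside the product (its contribution is $((-1)^\zeta)^{2p}=1$), the $\zeta$ factor drops out, and one is left with $E\big[\prod_{i=1}^{2p}(-1)^{N(nu_i)}\big]$. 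The key computational fact, classical for the Poisson process, is that for $u_1\le u_2\le\cdots\le u_{2p}$ one has $E\big[(-1)^{N(nu_1)+\cdots+N(nu_{2p})}\big] = \prod_{j} e^{-2n(u_{2j}-u_{2j-1})}$ (pairing consecutive points after ordering), because the increments of $N$ are independent and $E[(-1)^{N(h)}] = e^{-2nh}$ for an increment over a length-$h$ interval scaled by $n$; only the increments between an odd-indexed and the next even-indexed ordered point survive, the rest contributing factors equal to $1$.

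Next I would symmetrize: up to the combinatorial factor $(2p)!$ coming from the number of orderings, the integral reduces to
\[
E\big[|\theta^n_t-\theta^n_s|^{2p}\big] \le C_p\, n^p \int_{s\le u_1\le\cdots\le u_{2p}\le t} \prod_{j=1}^{p} e^{-2n(u_{2j}-u_{2j-1})}\, du_1\cdots du_{2p}.
\]
To bound this I would integrate out the variables in pairs, working from the innermost pair outward: for each pair $(u_{2j-1},u_{2j})$, integrating over $u_{2j}\in[u_{2j-1},t]$ gives $\int_{u_{2j-1}}^t e^{-2n(v-u_{2j-1})}\,dv \le \frac{1}{2n}$, which kills exactly one power of $n$ per pair, i.e.\ a total factor $n^{-p}$ cancelling the prefactor $n^p$. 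The remaining $p$ "free" variables $u_{2j-1}$ are integrated over a region of measure at most $|t-s|^p$ (they range over a simplex inside $[s,t]$). This yields $E\big[|\theta^n_t-\theta^n_s|^{2p}\big]\le C_p |t-s|^p$ uniformly in $n$, which is the claim.

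The main obstacle is organizing the pairing/ordering bookkeeping cleanly so that one genuinely obtains a factor $\tfrac{1}{2n}$ from each of the $p$ pairs while the leftover variables contribute only $|t-s|^p$; one must be careful that after integrating out $u_{2j}$ the upper limit is $t$ (not the next variable), so the bound $\tfrac1{2n}$ is legitimate and does not disturb the nesting of the remaining integrals. A convenient way to make this rigorous without heavy notation is to bound $\prod_j e^{-2n(u_{2j}-u_{2j-1})}$ by treating the $p$ "gap" integrals as independent one-dimensional integrals after a change of variables $v_j := u_{2j}-u_{2j-1}\ge 0$, so that $\int_0^\infty e^{-2nv_j}\,dv_j = \tfrac1{2n}$, while the $p$ base points sweep out a set of measure $\le |t-s|^p$; this decoupling is the only slightly delicate point and follows from Fubini once the ordering simplex is correctly described. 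An alternative, if one prefers to avoid the explicit Poisson moment identity, is to note that $(-1)^{N(n\cdot)}$ is (a time-changed) telegraph-type signal and invoke the same exponential correlation decay $E\big[(-1)^{N(nt)}(-1)^{N(ns)}\big]=e^{-2n|t-s|}$ together with hypercontractivity-style moment bounds, but the direct computation above seems the most transparent.
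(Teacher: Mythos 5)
Your proposal is correct and follows essentially the same route as the paper's proof: expansion of the $2p$-th moment, elimination of $\zeta$ by parity, symmetrization to the ordered simplex, the Poisson increment identity $E[(-1)^{N}]=e^{-2n(u_{2i}-u_{2i-1})}$ for consecutive pairs, and a decoupled pairwise integration producing $(2n)^{-1}$ per pair against the $n^p$ prefactor. The only cosmetic difference is that the paper relaxes the simplex indicator to the product $\mathbf{1}_{\{u_1<u_2\}}\cdots\mathbf{1}_{\{u_{2p-1}<u_{2p}\}}$ and factorizes into $p$ identical two-dimensional integrals, whereas you use a change of variables on the gaps; these are the same estimate.
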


\begin{proof}
We have that
 \begin{align*}
& E\big[  |\theta^{n}_t-\theta^n_s |^{2p}\big]=E\bigg[\lbp
\sqrt{n}\ist (-1)^{\zeta+N(nu)} du \rbp^{2p}\bigg]=
 n^{p} E\bigg[\lbp \ist (-1)^{N(nu)} du\rbp^{2p}\bigg] \\
& \quad =C_{p} n^{p}E\lbc\ist\cdots\ist
(-1)^{N(nu_{_1})+N(nu_{_2})+\cdots+N(nu_{_{2p}})} du_1\cdots
du_{2p}\rbc \\
& \quad  =C_{p} n^{p}E\lbc\ist\cdots\ist\mathbf{1}_{\{u_1<
u_2<\cdots< u_{2p}\}}
(-1)^{N(nu_{_1})+N(nu_{_2})+\cdots+N(nu_{_{2p}})} du_1\cdots
du_{2p}\rbc,
\end{align*}
where in the latter equality we have used the symmetry of the
integrand. Taking into account that the two possible values
of random variable
$(-1)^{N(nu_{_1})+\cdots+N(nu_{_{2p}})}$
 only depend on the fact that the exponent is even or odd, we can write
 the latter expression above as
\beq
C_{p} n^{p}\lbc\ist\cdots\ist\mathbf{1}_{\{u_1<
u_2<\cdots< u_{2p}\}} E\Big((-1)^{\sum_{i=1}^{p}N(nu_{_
{2i}})-N(nu_{_{2i-1}})}\Big) du_1\cdots du_{2p}\rbc.
\label{eq:113}
\eeq
Using that
for $u_1< u_2<\cdots< u_{2p}$,  the random variables $N(nu_{_
{2i}})-N(nu_{_{2i-1}})$ are independent with Poisson distribution
of parameter $n(u_{2i}-u_{2i-1})$, we have that (\ref{eq:113}) is equal to
$$C_{p} n^{p}\lbc\ist\cdots\ist\mathbf{1}_{\{u_1<
u_2<\cdots<u_{2p}\}} e^{-2n\big[\sum_{i=1}^{p}
(u_{_{2i}}-u_{_{2i-1}})\big]} du_1\cdots du_{2p}\rbc.$$
This term can be bounded by
\begin{align*}
& C_{p} n^{p}\lbc\ist\cdots\ist\mathbf{1}_{\{u_1<
u_2\}}\cdots\mathbf{1}_{\{u_{2q-1}<u_{2p}\}}
e^{-2n\big[\sum_{i=1}^{p} (u_{_{2i}}-u_{_{2i-1}})\big]} du_1\cdots
du_{2p}\rbc.\\
& \qquad =C_{p} n^{p}\lbp\ist\int_s^{u_2}e^{-2n(u_2-u_1)}du_1\,du_2\rbp^{p}
= C_{p} n^{p}\lbp\ist \frac1{2n}(1-e^{-2n(u_2-s)})du_2\rbp^{p} \\
& \qquad \le C_{p} (t-s)^{p}.
\end{align*}
This concludes the proof.
\end{proof}

\noindent
\textbf{Step 2 (Kac-Stroock case):}

\begin{proposition}\label{mateixespai-Kac}
There exists a probability space $(\bar \oom,\bar{\mathcal F},\bar
P)$, a Brownian motion $\bbe$ defined on it and, for each $n\in \N$,
a process $\btn$ with the same law as $\theta^n$ in (\ref{ks-intro})
such that, for any $\nu\in (0,\frac14)$ and any $p\in\N$, 
\begin{equation}\label{boun-ks}
\sup_{t\in[0,1]}E\Big[\abs{\bbe(t)- \btn(t)}^{2p}\Big]\le
C_{p,\nu}\, n^{-p\nu},
\end{equation}
for some constant $C_{p,\nu}$.
\end{proposition}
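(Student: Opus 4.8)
The plan is to transpose the Skorokhod-embedding argument of Proposition~\ref{mateixespai-Donsker} to the transport process, the role of the random walk being played by $\theta^n$ \emph{sampled at suitable renewal times}; this sampling, together with its embedding into a Brownian motion, is the content of the identification result of Griego, Heath and Ruiz-Moncayo \cite{Griego-Heath-Ruiz}.

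Fix $n\ge1$ and let $\sigma_k:=\tau_1+\cdots+\tau_k$, with $(\tau_k)_{k\ge1}$ i.i.d.\ $\mathrm{Exp}(1)$, be the jump times of $N$, so that $s\mapsto(-1)^{\zeta+N(ns)}$ switches sign at the times $s_k:=\sigma_k/n$ and $\theta^n$ is affine with slope $\pm\sqrt n$ on each $[s_{k-1},s_k]$. A direct computation gives
\begin{equation*}
\theta^n(s_{2j})-\theta^n(s_{2j-2})=\frac{(-1)^\zeta}{\sqrt n}\,\big(\tau_{2j-1}-\tau_{2j}\big),\qquad j\ge1,
\end{equation*}
so that the sampled sequence $\big(\theta^n(s_{2j})\big)_{j\ge0}$ is a centered random walk with i.i.d.\ symmetric increments of variance $2/n$ and all moments $O(n^{-m})$. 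Following \cite{Griego-Heath-Ruiz}, this walk may be Skorokhod-embedded into Brownian motion: I would therefore build a probability space $(\bar\oom,\bar{\mathcal F},\bar P)$ carrying a Brownian motion $\bbe$ and, for each $n$ (using the same $\bbe$ and, for each $n$, independent auxiliary randomness), a copy $\btn$ of $\theta^n$ — its renewal structure reconstructed from the embedded increments — together with an increasing family $0=\rho^n_0\le\rho^n_1\le\cdots$ of $\bbe$-stopping times such that $\btn(s_{2j})=\bbe(\rho^n_j)$ for all $j$, where $\rho^n_j-\rho^n_{j-1}$ are i.i.d.\ with mean $2/n$ and $E\big[(\rho^n_j-\rho^n_{j-1})^m\big]\le C_m\,n^{-m}$, exactly as in the Donsker embedding.

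Fix now $t\in[0,1]$ and $p\ge1$, and let $j=j(t,n)$ be the index with $s_{2j}\le t<s_{2j+2}$. Since $\btn$ is affine with slope $\le\sqrt n$ on $[s_{2j},t]$, one has $|\btn(t)-\btn(s_{2j})|\le\sqrt n\,(s_{2j+2}-s_{2j})=\frac{1}{\sqrt n}(\tau_{2j+1}+\tau_{2j+2})$, and $\tau_{2j+1}+\tau_{2j+2}$ being the length of the renewal interval of a $\mathrm{Gamma}(2,1)$-renewal process straddling the fixed point $nt$, standard renewal theory gives $E\big[(\tau_{2j+1}+\tau_{2j+2})^{2p}\big]\le C_p$ uniformly in $t,n$; hence $E\big[|\btn(t)-\btn(s_{2j})|^{2p}\big]\le C_p\,n^{-p}$. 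Decomposing
\begin{equation*}
\bbe(t)-\btn(t)=\big(\bbe(t)-\bbe(\rho^n_j)\big)-\big(\btn(t)-\btn(s_{2j})\big),
\end{equation*}
it remains to bound the $2p$-th moment of $\bbe(t)-\bbe(\rho^n_j)$, for which I would reproduce verbatim the dichotomy of Proposition~\ref{mateixespai-Donsker} with threshold $n^{-1/4}$: on $\{|t-\rho^n_j|\le n^{-1/4}\}$ the Brownian maximal inequality gives the bound $C_p\,E[|\bbe(n^{-1/4})|^{2p}]=C_p\,n^{-p/4}$, while on the complement Cauchy--Schwarz together with $E[\bbe(\rho^n_j)^{4p}]+E[\bbe(t)^{4p}]\le C_p$ (the former because $\bbe(\rho^n_j)=\btn(s_{2j})$ and $\sup_{u\le1}E[|\btn(u)|^{4p}]\le C_p$ by Proposition~\ref{tightKac}) reduces matters to showing $P\big(|t-\rho^n_j|>n^{-1/4}\big)\le C_p\,n^{-p/2}$.

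Proving this probability bound — uniformly in $t\in[0,1]$ and with the random index $j$ — is the point I expect to be the main obstacle. One writes $t-\rho^n_j=(t-s_{2j})+(s_{2j}-\tfrac{2j}{n})+(\tfrac{2j}{n}-\rho^n_j)$ and estimates the three $L^{2q}$-norms. The first is $O(n^{-1})$ by the zigzag bound. For the second, $2j=N(\sigma_{2j})$ and $\sigma_{2j}=nt-B$ with $B$ the (finite in every moment, uniformly) backward recurrence time at $nt$, so Poisson concentration gives $E[|\sigma_{2j}-2j|^{2q}]\le C_q\,n^q$ and hence $E[|s_{2j}-\tfrac{2j}{n}|^{2q}]\le C_q\,n^{-q}$. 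For the third, $\tfrac{2j}{n}-\rho^n_j=\sum_{i=1}^j\big(\tfrac2n-(\rho^n_i-\rho^n_{i-1})\big)$ is a centered martingale stopped at $j$; truncating $j$ at a deterministic level $\asymp n$ (the complementary event being of exponentially small probability) and combining Doob's maximal inequality with Rosenthal's inequality (Theorem~\ref{prop:Rosenthal}) yields $E\big[|\tfrac{2j}{n}-\rho^n_j|^{2q}\big]\le C_q\,n^{-q}$. Altogether $E[|t-\rho^n_j|^{2q}]\le C_q\,n^{-q}$, so Chebyshev's inequality with $q\ge p$ gives the desired bound, and assembling the pieces produces $\sup_{t\in[0,1]}E[|\bbe(t)-\btn(t)|^{2p}]\le C_p\,n^{-p/4}$, which in particular yields (\ref{boun-ks}) for every $\nu<\tfrac14$.
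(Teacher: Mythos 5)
Your proof is correct in substance and follows the same broad strategy as the paper's: a Skorokhod embedding of an exponential-increment walk into $\bbe$, the dichotomy at threshold $n^{-1/4}$, the Brownian maximal inequality on one side and Cauchy--Schwarz plus moment concentration of the embedding times on the other. The implementation, however, differs in two genuine ways. First, the embedded walk is not the same: the paper follows Griego--Heath--Ruiz literally and embeds the walk with increments $k_i\xi_i^{(n)}$, $\xi_i^{(n)}\sim \mathrm{Exp}(2\sqrt n)$, one step per turning point, recovering the zigzag path through the grid $\widetilde\tau^{(n)}_i=n^{-1/2}\big|\bbe\big(T^{(n)}_i\big)-\bbe\big(T^{(n)}_{i-1}\big)\big|$ and the GHR sign-merging identification; you instead sample $\theta^n$ at every \emph{other} turning time and embed the walk with increments $\tfrac{(-1)^\zeta}{\sqrt n}(\tau_{2j-1}-\tau_{2j})$. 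This forces you to reconstruct the full path between consecutive sampled points from the embedded increments, and that is the one soft spot of your argument: from the value of $\tau_{2j-1}-\tau_{2j}$ alone you must resample the pair $(\tau_{2j-1},\tau_{2j})$ from its conditional law given the difference (explicit here by memorylessness: the minimum is an independent $\mathrm{Exp}(2)$ variable) and resample $\zeta$, then check that the resulting process has the law of $\theta^n$ and satisfies $\btn(s_{2j})=\bbe(\rho^n_j)$ exactly. This is feasible but needs to be written out; the paper avoids it by citing \cite{Griego-Heath-Ruiz} for precisely this identification, so your appeal to that reference does not quite cover your variant. Second, you control the random index $j=j(t,n)$ directly via Doob's maximal inequality over $j\le Cn$ (after an exponential-tail truncation), whereas the paper sums its estimate over all $8n$ possible indices $\ell$; your route is sharper and yields $\sup_t E\big[|\bbe(t)-\btn(t)|^{2p}\big]\le C_p\,n^{-p/4}$, while the paper loses a factor $n$ and therefore only obtains $n^{-p\nu}$ for $\nu<\tfrac14$ after the preliminary reduction to $p(\tfrac14-\nu)\ge 1$. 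One small slip: $E\big[\bbe(\rho^n_j)^{4p}\big]\le C_p$ does not follow directly from $\sup_{u}E\big[|\btn(u)|^{4p}\big]\le C_p$, since $s_{2j}$ is random; write instead $\btn(s_{2j})=\btn(t)-\big(\btn(t)-\btn(s_{2j})\big)$ and invoke the zigzag bound you already established.
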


\begin{proof}
First of all, it is clear that we can suppose $p(1/4-\nu)\ge 1$ (otherwise, we can use Jensen's inequality).
Then, following the lines of \cite[Section 2]{Griego-Heath-Ruiz}, we consider a probability space $(\bar \oom,\bar{\mathcal F},\bar P)$ with the following mutually independent objects defined on it:
\begin{itemize}
\item[(i)] a Brownian motion $\bbe$,
\item[(ii)] for each $n\in \N$, a sequence of independent random variables
$\{\xi_i^{(n)},\, i\in \N\}$ such that $\xi_i^{(n)}$ has an exponential distribution with parameter
$2\sqrt{n}$,
\item[(iii)] a sequence $\{k_i,\, i\in \N\}$ of independent random variables such
that $P\{k_i=1\}=P\{k_i=-1\}=1/2$.
\end{itemize}
The i.i.d.
random variables $k_1\xi_1^{(n)}, \,k_2\xi_2^{(n)},
\ldots,$ verify $E\big[k_i \xi_i^{(n)}\big]=0$ and $E\big[ \big(k_i\xi_i\pn\big)^2\big]=\frac1{2n}$. Therefore, with the same result of Skorokhod as the one quoted in the proof of Lemma \ref{mateixespai-Donsker}
(see \cite[p. 163]{Skorokhod}), there exists a sequence of
independent positive random variables
$\tau_1\pn,\,\tau_2\pn \dots$, such that
$\bbe(\tau_1\pn),\,\bbe(\tau_1\pn+\tau_2\pn) \dots$ have the
same law as $k_1\xi_1^{(n)}, \,k_1\xi_1^{(n)}+k_2\xi_2^{(n)} \dots$, respectively.
Moreover, it holds that
$$E\big[\tau_i\pn\big]=E\big[ \big(k_i\xi_i\pn\big)^2\big]=\frac1{2n},$$
and, for each $m\in\N$,
$$E\big[|\tau_i\pn|^m\big]\le C_m
E\big[|k_i\xi_i\pn|^{2m}\big]\le \frac{C_m}{n^m}.$$
Set $T^{(n)}_i:=\sum_{j=1}^i \tau^{(n)}_j$ and define
$$\widetilde{\tau}_i\pn:=n^{-1/2}\big|\bbe\big(T^{(n)}_i\big)-\bbe\big(T^{(n)}_{i-1}\big)\big| \quad , \quad \widetilde{T}^{(n)}_i:=\sum_{j=1}^i \widetilde{\tau}^{(n)}_j.$$
Then, let $\btn=\{\btn(t), \,t\ge0\}$ be a piecewise linear process given
on the grid $\widetilde{T}^{(n)}_1,\widetilde{T}^{(n)}_2,\ldots$ by
$$\btn(\widetilde{T}^{(n)}_i) := \bbe(T^{(n)}_i)\sim \sum_{j=1}^i
k_j\xi_j\pn,$$
and $\btn(0)=0$. The $\widetilde{\tau}^{(n)}_i$'s are independent
random variables exponentially distributed with parameter $2n$, and
it is proved in \cite{Griego-Heath-Ruiz} that the process $\btn$ thus defined has the same law as $\theta^n$.

\smallskip

\noindent
Now, to show (\ref{boun-ks}), we decompose the term $E\Big[ \abs{\bbe(t)- \btn(t)}^{2p}\Big]$ as the sum of the following
 two terms:
 $$E_1^n:=E\Big[\abs{\bbe(t)- \btn(t)}^{2p}\,\mathbf{1}_{\{
 t\in[0,\widetilde{T}^{(n)}_{8n}]\}}\Big]$$
 and
$$E_2^n:=E\Big[\abs{\bbe(t)- \btn(t)}^{2p}\,\mathbf{1}_{\{
 t>\widetilde{T}^{(n)}_{8n}\}}\Big].$$
 Let us first study $E_1^n$. If $t$ belongs to
 $A_{\ell}^n:=\big[\widetilde{T}^{(n)}_{\ell-1},\widetilde{T}^{(n)}_{\ell}\big)$ for some
 $\ell=1,\ldots, 8n$, we have that $$\btn(t)-\bbe(t)=\bbe\big(T^{(n)}_{\ell-1}\big)-\bbe(t)+\frac{t-\widetilde{T}^{(n)}_{\ell-1}}{\widetilde{\tau}^{(n)}_{\ell}}\Big\{\bbe\big(T^{(n)}_{\ell}\big)-\bbe\big(T^{(n)}_{\ell-1}\big)\Big\}.$$
So
\begin{eqnarray}
E_1^n &=& \sum_{\ell=1}^{8n} E\big[ |\bbe(t)-\btn(t)|^{2p} \mathbf{1}_{\{t\in A_{\ell}^n \}} \big]\nonumber\\
&\leq & C_p \sum_{\ell=1}^{8n} E\big[ |\bbe(T^{(n)}_{\ell-1})-\bbe(t)|^{2p} \mathbf{1}_{\{t\in A_{\ell}^n \}} \big]+C_p\sum_{\ell=1}^{8n}  E\big[ |\bbe(T^{(n)}_{\ell})-\bbe(T^{(n)}_{\ell-1})|^{2p} \mathbf{1}_{\{t\in A_{\ell}^n \}} \big]\nonumber\\
&\leq & C_p \sum_{\ell=1}^{8n} E\big[ |\bbe(T^{(n)}_{\ell-1})-\bbe(t)|^{2p} \mathbf{1}_{\{t\in A_{\ell}^n \}} \big]+C_p\, n^{1-p},\label{fix-1}
\end{eqnarray}
where, for the last inequality, we have used the fact that $\bbe(T^{(n)}_{\ell})-\bbe(T^{(n)}_{\ell-1}) \sim k_1 \xi_1$. Now,
 for any fixed $\ell\in \{1,\ldots,8n\}$, write
\begin{multline}\label{fix-2}
E\big[ |\bbe(T^{(n)}_{\ell-1})-\bbe(t)|^{2p} \mathbf{1}_{\{t\in A_{\ell}^n \}} \big]
=E\Big[ |\bbe(T^{(n)}_{\ell-1})-\bbe(t)|^{2p} \mathbf{1}_{\{t\in A_{\ell}^n \}}\mathbf{1}_{\{|t-T^{(n)}_{\ell-1}|\leq n^{-1/4}\}} \Big]\\
+E\Big[ |\bbe(T^{(n)}_{\ell-1})-\bbe(t)|^{2p} \mathbf{1}_{\{t\in A_{\ell}^n \}}\mathbf{1}_{\{|t-T^{(n)}_{\ell-1}|>n^{-1/4}\}} \Big].
\end{multline}
The first term in (\ref{fix-2}) can be bounded with the same argument as in the proof of Proposition \ref{mateixespai-Donsker} (see (\ref{fix-4})), which gives
$$E\Big[ |\bbe(T^{(n)}_{\ell-1})-\bbe(t)|^{2p} \mathbf{1}_{\{t\in A_{\ell}^n \}}\mathbf{1}_{\{|t-T^{(n)}_{\ell-1}|\leq n^{-1/4}\}} \Big]\leq C_p \, n^{-p/4}.$$
As far as the second term in (\ref{fix-2}) is concerned, we have
\begin{multline}\label{fix-3}
E\Big[ |\bbe(T^{(n)}_{\ell-1})-\bbe(t)|^{2p} \mathbf{1}_{\{t\in A_{\ell}^n \}}\mathbf{1}_{\{|t-T^{(n)}_{\ell-1}|>n^{-1/4}\}} \Big]\\
 \leq \Big\{ E\big[|\bbe(t)-\bbe(T^{(n)}_{\ell-1})|^{4p} \big] \Big\}^{1/2}\Big\{\bar{P}\big(t\in A_{\ell}^n , |t-T^{(n)}_{\ell-1}|>n^{-1/4} \big) \Big\}^{1/2},
\end{multline}
and since
$$
 E\Big[\big|\bbe\big(T^{(n)}_{\ell-1}\big)\big|^{4p}\Big]=E\Big[\big|\sum_{j=1}^{\ell-1}k_j\xi_j\pn\big|^{4p}\Big] \le C_{p}n^{2p}E\Big[\big|k_1\xi_1\pn\big|^{4p}\Big]\le C_{p},
$$
we only have to focus on the probability appearing in (\ref{fix-3}). To do so, let us notice that
\bean
\lefteqn{\bar{P}\big(t\in A_{\ell}^n , |t-T^{(n)}_{\ell-1}|>n^{-1/4} \big)}\\
 &\leq & \bar{P}\big( |t-\widetilde{T}^{(n)}_{\ell-1}|\leq \widetilde{\tau}^n_\ell, |t-T^{(n)}_{\ell-1}|>n^{-1/4} \big)\\
&\leq & \bar{P}\big( |t-\widetilde{T}^{(n)}_{\ell-1}|\leq \widetilde{\tau}^n_\ell,\widetilde{\tau}^{(n)}_\ell \leq \frac{1}{2} n^{-1/4}, |t-T^{(n)}_{\ell-1}|>n^{-1/4} \big)+\bar{P}\big( \widetilde{\tau}^{(n)}_\ell >\frac{1}{2} n^{-1/4}\big)\\
&\leq & \bar{P}\big( |t-\widetilde{T}^{(n)}_{\ell-1}|\leq \frac{1}{2} n^{-1/4}, |t-T^{(n)}_{\ell-1}|>n^{-1/4} \big)+C_p\, n^{-3p/2}\\
&\leq & \bar{P}\big( |T^{(n)}_{\ell-1}-\widetilde{T}^{(n)}_{\ell-1}|> \frac{1}{2} n^{-1/4} \big)+C_p\, n^{-3p/2}.
\eean
Then
$$\bar{P}\big( |T^{(n)}_{\ell-1}-\widetilde{T}^{(n)}_{\ell-1}|> \frac{1}{2} n^{-1/4} \big) \leq C_p \, n^{p/2} \Big\{ E\Big[\big|T^{(n)}_{\ell-1}-\frac{l-1}{2n}\big|^{2p} \Big]+E\Big[\big|\widetilde{T}^{(n)}_{\ell-1}-\frac{l-1}{2n}\big|^{2p} \Big]\Big\}\leq C_p \, n^{-p/2},$$
where we have used the same argument as in (\ref{ref-donsk-2}) to
get the last bound. Going back to (\ref{fix-1}), we deduce that
$E_1^n \leq C_p \, n \, n^{-p/4} \leq C_p \, n^{-\nu p}$, since $p$
is assumed to satisfy $p(\frac{1}{4}-\nu)\geq 1$.

\smallskip

\noindent
Eventually, we must deal with $E_2^n$. In fact, we have that
$$E_2^n\le \Big\{ E\Big[\big| \btn(t)-\bbe(t)\big|^{4p}\Big]\Big\}^{1/2}\Big\{
P\big(t>\widetilde{T}^{(n)}_{8n}\big)\Big\}^{1/2}\le C_{p}\Big\{
P\Big(t>\sum_{j=1}^{8n} \widetilde{\tau}^{(n)}_j\Big)\Big\}^{1/2},$$
where we have used Lemma
\ref{tightKac}. If we denote by $N_n$ a Poisson process with
intensity $2n$, we can write
$$
P\Big(t>\sum_{j=1}^{8n} \widetilde{\tau}^{(n)}_j\Big)\le  P\Big(1>\sum_{j=1}^{8n}\widetilde{\tau}^{(n)}_j\Big)\le P\big(
N_n(1)\ge 8n\big),$$
because the $\widetilde{\tau}^{(n)}_j$'s are independent random
variables exponentially distributed with parameter $2n$. The latter
probability can be bounded by using Stirling's inequality, as follows:
\begin{align*}
P\big( N_n(1)\ge 8n\big) & =\sum_{k=8n}^{\infty}e^{-2n}\frac{(2n)^k}{k!}
\le C\, e^{-2n} \sum_{k=8n}^{\infty}\frac{(2n)^k}{\sqrt{2\pi
k}\Big(\frac{k}{e}\Big)^k} \\
& =C\, e^{-2n}\sum_{k=8n}^{\infty}\Big(\frac{2en}{k}\Big)^k
\frac1{\sqrt{2\pi k}} \le C\,
e^{-2n}\sum_{k=8n}^{\infty}\Big(\frac{e}{4}\Big)^k\le Ce^{-2n}.
\end{align*}
This lets us conclude the proof.
\end{proof}

%%%%%%%%%%%%%%%%%%%%%%%%%%%%%%%%%%%%%%%%%%%%%%%%%%%%%%%%%%%%%%%%%%%%%%%%%%%%%%

\section{Appendix A: fractional Sobolev spaces}

We gather here some classical properties of the fractional Sobolev spaces $(\cb_{\al,p})_{\al\in \R,p\in \N}$, which are extensively used throughout the paper. We recall the notations $\cb_\al$ for $\cb_{\al,2}$ and $\cb$ for $\cb_0$. Let us first label the following well-known regularizing properties of the semigroup (see \cite{pazy}).

\begin{proposition}
Fix two parameters $\la <\al \in \R$. Then, for every $\vp \in \cb_{\la}$ and $t>0$,
\begin{equation}\label{regu-semi-1}
\|S_t \vp \|_{\cb_\al} \leq c\, t^{-(\al-\la)} \|\vp\|_{\cb_\la} \quad , \quad \|\Delta S_t \vp \|_{\cb_\al} \leq c\, t^{-1-(\al-\la)} \|\vp\|_{\cb_\la}.
\end{equation}
and for every $\psi \in \cb_\al$,
\begin{equation}\label{hold-sob}
\|S_t \psi-\psi \|_{\cb_\la} \leq c \, t^{\al-\la} \| \psi\|_{\cb_\al} \quad , \quad \|\Delta S_t \psi \|_{\cb_\la} \leq c \, t^{-1+(\al-\la)} \| \psi\|_{\cb_\al}.
\end{equation}
\end{proposition}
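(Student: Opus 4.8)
The plan is to deduce both (\ref{regu-semi-1}) and (\ref{hold-sob}) from the standard analytic-semigroup calculus, the only structural inputs being that $-\Delta$ generates an analytic semigroup of contractions on every $L^p(0,1)$, $p\in(1,\infty)$, that its spectrum is contained in $[\pi^2,\infty)$ — so that $-\Delta$ is boundedly invertible and $(-\Delta)^\be$ is a \emph{bounded} operator on $L^p(0,1)$ for every $\be\leq0$ — and that $\cb_{\al,p}$ coincides, with equivalent norm, with the domain $\cd((-\Delta)^\al)$ of the corresponding fractional power (all recalled, e.g., in \cite{pazy, run-sick}). The two elementary building blocks I would isolate are: \textbf{(a)} for each $\be\geq0$ there is $c_\be$ with $\norm{(-\Delta)^\be S_t}_{\cl(L^p)}\leq c_\be\,t^{-\be}$ for all $t>0$; and \textbf{(b)} for each $\ga\in[0,1]$ there is $c_\ga$ with $\norm{(-\Delta)^{-\ga}(\id-S_t)}_{\cl(L^p)}\leq c_\ga\,t^{\ga}$ for all $t>0$.

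Granting (a)--(b), the four estimates are obtained by bookkeeping with the (commuting) fractional powers of $-\Delta$. For $\vp\in\cb_\la$,
\[
\norm{S_t\vp}_{\cb_\al}\asymp\norm{(-\Delta)^\al S_t\vp}_{L^p}=\norm{(-\Delta)^{\al-\la}S_t\,(-\Delta)^\la\vp}_{L^p}\leq c_{\al-\la}\,t^{-(\al-\la)}\norm{\vp}_{\cb_\la},
\]
where $\al-\la>0$ makes (a) applicable, which is the first half of (\ref{regu-semi-1}); replacing $\al$ by $\al+1$ and using $\Delta S_t=-(-\Delta)S_t$ produces the exponent $-1-(\al-\la)$ and the second half. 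Likewise, for $\psi\in\cb_\al$,
\[
\norm{S_t\psi-\psi}_{\cb_\la}\asymp\norm{(-\Delta)^{\la-\al}(\id-S_t)\,(-\Delta)^\al\psi}_{L^p}\leq c_{\al-\la}\,t^{\al-\la}\norm{\psi}_{\cb_\al}
\]
by (b) with $\ga=\al-\la$ (the regime $\al-\la\leq1$ being the one used throughout the paper; when $\al-\la>1$ one first extracts $\lfloor\al-\la\rfloor$ bounded factors of the form $(-\Delta)^{-1}(\id-S_t)=\int_0^t S_s\,ds$), giving the first half of (\ref{hold-sob}); and $(-\Delta)^\la\Delta S_t\psi=-(-\Delta)^{1+\la-\al}S_t\,(-\Delta)^\al\psi$ together with (a) (for $1+\la-\al\geq0$, else with the boundedness of the negative power) yields the exponent $-1+(\al-\la)$ and the last half.

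It remains to justify (a)--(b). When $p=2$ this is immediate from the spectral theorem for $-\Delta$, the operator norms reducing to $\sup_{\mu\geq\pi^2}\mu^\be e^{-t\mu}$ and $\sup_{\mu\geq\pi^2}\mu^{-\ga}|1-e^{-t\mu}|$, which are controlled by the scalar inequalities $\mu^\be e^{-t\mu}\leq(\be/e)^\be t^{-\be}$ and $|1-e^{-t\mu}|\leq\min(1,t\mu)\leq(t\mu)^\ga$ (valid for $\ga\in[0,1]$). For general $p\in(1,\infty)$ the same estimates are the classical bounds for an analytic semigroup and for the negative fractional powers of its generator, which I would simply quote from \cite{pazy}; the identification $\cb_{\al,p}\simeq\cd((-\Delta)^\al)$ is likewise part of the standard $L^p$ theory of the Dirichlet Laplacian and is taken from \cite{run-sick}. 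I expect this last point — the $p\neq2$ versions of (a)--(b) and the Sobolev-space identification — to be the only genuinely non-elementary ingredient; everything else is the two scalar inequalities above combined with the semigroup law.
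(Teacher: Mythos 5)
The paper gives no proof of this proposition: it is stated as a collection of well-known regularizing properties and simply cites \cite{pazy}. Your argument is precisely the standard one that this citation points to --- reduce everything to the two analytic-semigroup bounds $\norm{(-\Delta)^\be S_t}\leq c_\be t^{-\be}$ and $\norm{(-\Delta)^{-\ga}(\id-S_t)}\leq c_\ga t^{\ga}$ via the identification $\cb_{\al,p}\simeq\cd((-\Delta)^\al)$ and the commuting functional calculus --- and it is correct in the regime actually used in the paper.

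One caveat on your parenthetical treatment of $\al-\la>1$ in (\ref{hold-sob}): the proposed extraction of $\lfloor\al-\la\rfloor$ factors $(-\Delta)^{-1}(\id-S_t)=\int_0^t S_s\,ds$ does not work, because the single operator $\id-S_t$ can only be paired with one inverse power; e.g.\ $(-\Delta)^{-2}(\id-S_t)=(-\Delta)^{-1}\int_0^t S_s\,ds$ has norm $O(t)$, not $O(t^2)$. In fact both halves of (\ref{hold-sob}) are simply false when $\al-\la>1$ (test against the first eigenfunction $e_1$: $\norm{S_t e_1-e_1}_{\cb_\la}\sim c\,t$ as $t\to0$, which is not $O(t^{\al-\la})$), so the statement must be read with the implicit restriction $\al-\la\leq 1$, which is the only case the paper ever invokes. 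Your proof covers exactly that case, so nothing is lost; just drop the claim that the $\al-\la>1$ case can be recovered.
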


The next results are taken from the exhaustive book \cite{run-sick} on fractional Sobolev spaces. With the notations of the latter reference, our space $\cb_{\al,p}$ ($\al\in\R,p\in \N$) corresponds to $F^{2\al}_{p,2}$. Let us first report some properties regarding pointwise multiplication of functions. Due to the multiplicative perturbation in (\ref{equa-mild}), it is indeed natural that these results should intervene at some point. In the statement, the notation $ E \, \cdot\,  F \  \subset \ G $ must be understood as $\norm{\vp \cdot \psi}_G \leq c \, \norm{\vp}_E \norm{\psi}_F$
for every $\vp \in E,\psi \in F$.

\begin{proposition}
The following properties hold true:
\begin{enumerate}
\item (\cite[Section 2.4.4]{run-sick}) One has
\begin{equation}\label{sobolev-inclusion}
L^r(0,1) \subset \cb_{-\al} \quad  \text{if} \quad  \al \geq \frac{1}{2r}-\frac{1}{4},
\end{equation}
and in particular:
\[
L^p(0,1) \, \cdot \, \cb \  \subset \ \cb_{-\al} \quad \text{if} \quad \al \geq \frac{1}{2p}.
\]
\item (\cite[Section 4.6.1]{run-sick}) Let $\al_1<\al_2$ be such that $\al_1+\al_2>0$ and $\al_2>\frac{1}{4}$. Then
\begin{equation}\label{prod-sobol-1}
\cb_{\al_1} \, \cdot \, \cb_{\al_2} \  \subset \ \cb_{\al_1}.
\end{equation}
In particular, $\cb_\al$ is an algebra as soon as $\al> \frac{1}{4}$.
\item (\cite[Section 4.8.2]{run-sick}) Let $\al\geq 0$ and $p_1,p_2,p\geq 2$ be such that $2\al < \frac{1}{p_i}$ ($i\in \{1,2\}$) and $\frac{1}{p_1}+\frac{1}{p_2}=\frac{1}{p}$. Then
\begin{equation}\label{prod-sobol-2}
\cb_{\al,p_1} \, \cdot \, \cb_{\al,p_2} \  \subset \ \cb_{\al,p}.
\end{equation}
\end{enumerate}
\end{proposition}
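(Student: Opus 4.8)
The plan is to derive all three assertions as direct consequences of the corresponding theorems in \cite{run-sick}, once one exploits the identification recalled just above the statement, namely $\cb_{\al,p}=F^{2\al}_{p,2}$ on the interval $(0,1)$ (the validity of this identification for the scale associated with the Dirichlet Laplacian being explained, for a general second-order operator, in \cite[Section 2.1]{RHE-glo}). Thus the only genuine content is to match our hypotheses with those of \cite{run-sick} and to handle the passage from the whole line to the bounded interval.

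For part (i), I would first rewrite the two sides as $L^r(0,1)=F^0_{r,2}(0,1)$ (Littlewood--Paley characterization of $L^r$, $1<r<\infty$) and $\cb_{-\al}=F^{-2\al}_{2,2}(0,1)$, and then invoke the Sobolev-type embedding between Triebel--Lizorkin spaces, $F^{s_0}_{r_0,q}\hookrightarrow F^{s_1}_{r_1,q}$ whenever $r_0\le r_1$ and $s_0-\tfrac1{r_0}\ge s_1-\tfrac1{r_1}$, valid on a bounded interval (the inequality, rather than equality, being permitted precisely because the domain is bounded); see \cite[Section 2.4.4]{run-sick}. Specializing to $s_0=0$, $r_0=r$, $s_1=-2\al$, $r_1=2$, the condition $-\tfrac1r\ge-2\al-\tfrac12$ reads exactly $\al\ge\tfrac1{2r}-\tfrac14$ (and $\al\ge0$ forces $r\le2$, so the embedding indeed goes in the admissible direction). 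The particular case $L^p(0,1)\cdot\cb\subset\cb_{-\al}$ for $\al\ge\tfrac1{2p}$ then follows by combining Hölder's inequality, $\|\vp\cdot\psi\|_{L^s}\le\|\vp\|_{L^p}\|\psi\|_{L^2}$ with $\tfrac1s=\tfrac1p+\tfrac12$, with the embedding just established applied with $r=s$, for which $\tfrac1{2s}-\tfrac14=\tfrac1{2p}$.

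For parts (ii) and (iii) I would quote the pointwise-multiplier theorems for Triebel--Lizorkin spaces. Claim (iii) is the statement of \cite[Section 4.8.2]{run-sick} for the spaces $F^{2\al}_{p_i,2}$: when the common smoothness $2\al$ is nonnegative and strictly subcritical in each factor, that is $2\al<\tfrac1{p_i}$, the pointwise product maps $F^{2\al}_{p_1,2}\times F^{2\al}_{p_2,2}$ into $F^{2\al}_{p,2}$ for $\tfrac1{p_1}+\tfrac1{p_2}=\tfrac1p$, which is exactly (\ref{prod-sobol-2}). Claim (ii) is \cite[Section 4.6.1]{run-sick}: a factor in $\cb_{\al_2}=F^{2\al_2}_{2,2}$ with $\al_2>\tfrac14$ has smoothness $2\al_2>\tfrac12$, which is supercritical for $L^2$-based spaces in dimension one, hence a multiplier algebra; a paraproduct decomposition then shows it acts boundedly on $\cb_{\al_1}$ as soon as $\al_1+\al_2>0$ (this is what controls the low-high and high-high interactions), giving $\cb_{\al_1}\cdot\cb_{\al_2}\subset\cb_{\al_1}$, and the special case $\al_1=\al_2=\al>\tfrac14$ recovers the one-dimensional Sobolev algebra property.

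The main obstacle, and the only point requiring any care, is that the results of \cite{run-sick} are most naturally formulated on $\RR$, whereas our $\cb_{\al,p}$ are defined via the Dirichlet Laplacian on $(0,1)$. I would deal with this by transporting through a bounded extension operator $\cb_{\al,p}(0,1)\to F^{2\al}_{p,2}(\RR)$ available in the relevant range of smoothness indices, performing the multiplication (or embedding) on $\RR$, and then restricting back to $(0,1)$; this is legitimate because every exponent appearing in the statement stays strictly below the threshold at which boundary conditions become visible, exactly as in the identification of the fractional-power domains invoked in \cite[Section 2.1]{RHE-glo}. This transfer is routine and introduces no analytic difficulty beyond what is already contained in the cited theorems of \cite{run-sick}, so I would not reproduce their proofs.
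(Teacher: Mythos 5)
Your proposal is correct and follows exactly the route the paper itself takes: the paper offers no proof of this proposition, simply quoting the three results from \cite{run-sick} via the identification $\cb_{\al,p}=F^{2\al}_{p,2}$, and your matching of exponents (in particular the H\"older-plus-embedding derivation of the ``in particular'' clause of part (i), where $\tfrac1s=\tfrac1p+\tfrac12$ gives $\tfrac1{2s}-\tfrac14=\tfrac1{2p}$) is accurate. The only substantive addition is your explicit treatment of the extension/restriction step from $(0,1)$ to $\RR$ and of the identification of the Dirichlet fractional-power domains with the $F^{2\al}_{p,2}$ scale, which the paper leaves implicit by pointing to \cite[Section 2.1]{RHE-glo}; this is a harmless and correct elaboration.
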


\medskip

Let us also label here the classical Sobolev embedding
\begin{equation}\label{sobolev-inclusion-3}
\cb_{\al,p} \subset \cb_\infty \quad \text{if} \quad 2\alpha>\frac 1p,
\end{equation}
which yields in particular:
\begin{equation}\label{sobolev-inclusion-2}
\cb_{\al} \subset \cb_\infty \quad \text{as soon as} \quad \al >\frac14.
\end{equation}

\

Finally, in order to handle the non-linearity in (\ref{equa-mild}), we resort at some point to the following stability result for composition of functions (see \cite[Section 5.3.6]{run-sick}): if $f:\R\to \R$ is differentiable with bounded derivative, then for every $\al\in [0,\frac{1}{2}]$ and $\vp\in \cb_\al$,
\begin{equation}\label{nemy}
\norm{f(\vp)}_{\cb_\al} \leq c_f \lcl 1+\norm{\vp}_{\cb_\al} \rcl.
\end{equation}
Here, $f$ is also understood as its associated Nemytskii operator, i.e., $f(\vp)(\xi):=f(\vp(\xi))$.

%%%%%%%%%%%%%%%%%%%%%%%%%%%%%%%%%%%%%%%%%%%%%%%

\section{Appendix B: A priori estimates on the solution}

It only remains to prove the two a priori controls (\ref{apriori2-1}) and (\ref{apriori2-2}) for the solution $Y$ of (\ref{equa-mild}) 
(or equivalently (\ref{eq-base})). To do so, we will rely on the following result, taken from \cite[Lemma 6.5]{RHE}, 
and which extends the classical Garsia-Rodemich-Rumsey in two directions: 1) it covers the case of $\delha$-variations and 2) it 
applies to more general processes defined on the $2$-dimensional simplex $\cs_2=\{(t,s)\in [0,T]^2: t\geq s\}$.

\begin{lemma}\label{lem-grr}
Let $\der^\ast=\der$ or $\delha$. For every $\al,\be \geq 0$ and $p,q \geq 1$, there exists a constant $c$ such that, for any $R:\cs_2 \to\cb_{\al,p}$,
$$\cn[R;\cac_2^\be([0,T];\cb_{\al,p})] \leq c \lcl U_{\be+\frac{2}{q},q,\al,p}(R)+\cn[\der^\ast R;\cac_3^\be([0,T];\cb_{\al,p}] \rcl,$$
where
$$U_{\be,q,\al,p}(R)=\lc \int_{0\leq u<v\leq T} \lp \frac{\norm{R_{vu}}_{\cb_{\al,p}}}{\lln v-u\rrn^\be} \rp^q du dv \rc^{1/q}.$$
\end{lemma}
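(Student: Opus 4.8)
The plan is to reproduce the classical Garsia--Rodemich--Rumsey argument, the only novelties being that the trivial telescoping $f(t)-f(s)=\sum_i\big(f(t_{i+1})-f(t_i)\big)$ is replaced by the $\der^{\ast}$-telescoping, and that in the $\delha$ case one carries around semigroup factors $S_{\cdot}$, which are harmless since $(S_u)_{u\ge 0}$ acts as a contraction on $\cb_{\al,p}$. First I would reduce to the non-trivial case: assume $U:=U_{\be+2/q,q,\al,p}(R)<\infty$ and $N:=\cn[\der^{\ast}R;\cac_3^\be([0,T];\cb_{\al,p})]<\infty$, and observe that finiteness of $U$ forces $R$ to admit a version that is continuous on $\cs_2$ with $R_{tt}=0$ (standard argument from the integrability of the difference quotient), a version with which I work from now on. Writing $\|\cdot\|:=\|\cdot\|_{\cb_{\al,p}}$, the goal is the pointwise estimate $\|R_{ts}\|\le c\,|t-s|^\be\,(U+N)$ for every $s<t$, whence the statement follows by taking the supremum.

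Fix $s<t$ and put $\rho:=t-s$. As in Garsia--Rodemich--Rumsey, I would build a decreasing sequence $t=\tau_0>\tau_1>\tau_2>\cdots$ with $\tau_n\downarrow s$ and $\ell_n:=\tau_n-s\le\rho\,2^{-n}$, the points $\tau_{n+1}\in(s,\tau_n)$ being selected by the usual mean-value/averaging device applied to the integrand defining $U$: since the restriction of that integrand to a square of side $\ell_n$ has $L^1$-mass at most $\ell_n^{\,q\be+2}\,U_n^{\,q}$ (with $U_n^q$ its mass over that square), one can pick $\tau_{n+1}$ so that $\|R_{\tau_n\tau_{n+1}}\|\le c\,\ell_n^{\be}\,U_n$; here the averaging over a $2$-dimensional square produces exactly the factor $\ell_n^{-2}$, i.e.\ the $2/q$ in the exponent $\be+2/q$. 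Iterating the identity $R_{\tau_n s}=R_{\tau_n\tau_{n+1}}+S_{\tau_n-\tau_{n+1}}R_{\tau_{n+1}s}+(\delha R)_{\tau_n\tau_{n+1}s}$ (and its semigroup-free analogue when $\der^{\ast}=\der$) and using continuity of $R$ to kill the running remainder $\|S_{\tau_0-\tau_N}R_{\tau_N s}\|\le\|R_{\tau_N s}\|\to0$, I obtain
\[
R_{ts}=\sum_{n\ge 0}S_{\tau_0-\tau_n}\big(R_{\tau_n\tau_{n+1}}+(\der^{\ast}R)_{\tau_n\tau_{n+1}s}\big),
\]
with the convention $S_{\cdot}=\id$ in the $\der$ case.

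The estimate then reduces to summing two series, using $\|S_u\|_{\cl(\cb_{\al,p})}\le 1$ throughout. For the correction terms, $\|(\der^{\ast}R)_{\tau_n\tau_{n+1}s}\|\le N\,(\tau_n-s)^{\be}\le N\,\rho^{\be}2^{-n\be}$, which is summable precisely because $\be>0$, contributing $\lesssim\rho^\be N$. For the main terms, $\sum_n\|R_{\tau_n\tau_{n+1}}\|\le c\sum_n\ell_n^\be U_n\le c\big(\sum_n\ell_n^{\be q'}\big)^{1/q'}\big(\sum_n U_n^q\big)^{1/q}$ by H\"older's inequality ($\frac1q+\frac1{q'}=1$; trivial if $q=1$), where $\sum_n\ell_n^{\be q'}\le\rho^{\be q'}\sum_n 2^{-n\be q'}<\infty$ and $\sum_n U_n^q\le c\,U^q$ for a suitable (boundedly overlapping) choice of the selection squares; this contributes $\lesssim\rho^\be U$. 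Altogether $\|R_{ts}\|\le c\,\rho^\be(U+N)$.

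I expect the main obstacle to be the bookkeeping of the Garsia selection: one must arrange \emph{simultaneously} that the increments $R_{\tau_n\tau_{n+1}}$ be controlled by (boundedly overlapping) pieces of the integral $U$, that the sequence contract geometrically to $s$, and that the running remainder stay under control — this is the delicate combinatorial heart of the classical statement, here only mildly complicated by having to track the factors $S_{\tau_0-\tau_n}$ in the $\delha$ case and to work with the $3$-variable defect $\der^{\ast}R$ rather than a genuine second-order increment. Since this lemma is invoked only as a black box in the body of the paper, one may also simply refer to \cite[Lemma 6.5]{RHE} for the full details.
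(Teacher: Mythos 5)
The paper does not prove this lemma at all: it is quoted verbatim from \cite[Lemma 6.5]{RHE} and used as a black box, so there is no internal proof to compare against. Your sketch reconstructs precisely the argument of that reference, namely the classical Garsia--Rodemich--Rumsey chaining adapted to the twisted increment: Chebyshev-type selection of a geometrically contracting sequence $\tau_n\downarrow s$ from the two-dimensional averaging (which is exactly where the exponent shift $\be\mapsto\be+\frac2q$ comes from), iteration of $R_{\tau_n s}=R_{\tau_n\tau_{n+1}}+S_{\tau_n-\tau_{n+1}}R_{\tau_{n+1}s}+(\der^\ast R)_{\tau_n\tau_{n+1}s}$, uniform boundedness of $S$ on $\cb_{\al,p}$ (it is even a contraction there, since $(-\Delta)^\al$ commutes with $S_t$), and summation of the two series, the defect series converging because $\be>0$. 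This is the right proof, and deferring the combinatorial bookkeeping of the selection to the cited reference is legitimate given how the lemma is used.

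One step as written would not survive scrutiny: the claim that finiteness of $U$ alone forces $R$ to admit a continuous version on $\cs_2$ with $R_{tt}=0$. That implication is the \emph{conclusion} of the classical GRR lemma when $R=\der f$ for a one-parameter $f$, but here $R$ is an arbitrary two-parameter process which need not be an exact increment, and finiteness of the double integral only controls $R$ off a null set of pairs; the specific points $\tau_N$ and $s$ produced by the selection could a priori be bad pairs, so the remainder $S_{\tau_0-\tau_N}R_{\tau_N s}$ cannot be discarded without assuming continuity of $R$ near the diagonal (with $R_{tt}=0$) from the outset. This is how the lemma is stated and used in \cite{RHE} and in the present paper (all the processes $R$ to which it is applied are continuous), so the fix is simply to add continuity of $R$ as a standing hypothesis rather than to derive it; as stated, ``for any $R:\cs_2\to\cb_{\al,p}$'' is a slight abuse in the paper itself. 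A second, cosmetic point: your summation of the main series via H\"older over boundedly overlapping squares is more elaborate than needed; bounding each local mass $U_n$ by the global $U$ and summing the geometric series $\sum_n\ell_n^\be$ already gives $c\,\rho^\be U$.
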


\begin{proof}[Proof of Lemma \ref{lem:apriori}]
In both cases, we will resort to the previous Lemma, which essentially reduces the problem to moment estimates. Thus, the following Burkholder-Davis-Gundy type inequality 
(borrowed from \cite[Lemma 7.7]{DaPrato-Zabczyk}) naturally comes into play: for every $\al \geq 0$, one has, by setting $U_0:=Q^{1/2}(\cb)$,
\begin{equation}\label{bdg}
E\Big[\Big\| \int_s^t S_{t-u}(f(Y_u) \cdot dW_u)\Big\|_{\cb_\al}^{2q} \Big]\leq c_q \Big( \int_s^t E\Big[ \|S_{t-u}(f(Y_u) \cdot \ast)\|_{HS(U_0,\cb_\al)}^{2q} \Big]^{\frac{1}{q}} du \Big)^q,
\end{equation}
where the notation $HS(U_0,\cb_\al)$ refers to the space of Hilbert-Schmidt operators defined on $U_0$ and taking values in $\cb_\al$. Note also that the family $(\la_k e_k)$ defines an orthonormal basis of $U_0$ and accordingly
\begin{equation}\label{hilb-schmi}
\|S_{t-u}(f(Y_u) \cdot \ast)\|_{HS(U_0,\cb_\al)}=\Big( \sum_k \la_k \|S_{t-u}(f(Y_u) \cdot e_k)\|_{\cb_\al}^2 \Big)^{1/2}.
\end{equation}
Now, to show that $Y\in \cacha^{2\eta}(\cb_\infty)$, observe first that for every $q\geq 1$ and any small $\ep >0$,
\begin{multline}\label{apri-inf}
E\big[ \norm{(\delha Y)_{ts}}_{\cb_\infty}^{2q}\big] \leq c_q \, E\Big[ \norm{(\delha Y)_{ts}}_{\cb_{\frac{1}{4}+\ep}}^{2q} \Big]\\
\leq c_q \Big\{ E\Big[\big\|\int_s^t S_{t-u}(f(Y_u) \cdot dW_u )\big\|_{\cb_{\frac{1}{4}+\ep}}^{2q} \Big] +E\Big[\Big(\int_s^t \norm{S_{t-u}(P\cdot f(Y_u)\cdot f'(Y_u)) }_{\cb_{\frac{1}{4}+\ep}} du \Big)^{2q} \Big]\Big\}.
\end{multline}
The second summand in (\ref{apri-inf}) is trivially bounded by $c_p \lln t-s\rrn^{2q(\frac{3}{4}-\ep)}$ since
$$\|S_{t-u}(P\cdot f(Y_u) \cdot f'(Y_u))\|_{\cb_{\frac{1}{4}+\ep}}\leq c \lln t-u\rrn^{-\frac{1}{4}-\ep} \|P\cdot f(Y_u) \cdot f'(Y_u)\|_\cb\leq c \lln t-u\rrn^{-\frac{1}{4}-\ep}.$$
As far as the first summand in (\ref{apri-inf}) is concerned, observe that
$$\sum_k \la_k \|S_{t-u}(f(Y_u) \cdot e_k)\|_{\cb_{\frac{1}{4}+\ep}}^2\leq c\sum_k \la_k \lln t-u\rrn^{-\frac12-2\ep}\|f(Y_u)\cdot e_k\|_\cb \leq c \lln t-u\rrn^{-\frac12-2\ep},$$
which, owing to (\ref{bdg}) and (\ref{hilb-schmi}), entails that
$$E\Big[\big\|\int_s^t S_{t-u}(f(Y_u) \cdot dW_u )\big\|_{\cb_{\frac{1}{4}+\ep}}^{2q} \Big]\leq c_q \lln t-s \rrn^{(\frac12-2\ep)q}.$$
Going back to (\ref{apri-inf}), we are in a position to apply Lemma \ref{lem-grr} and assert that $Y\in \cacha^{\frac14-}(\cb_\infty) \subset \cacha^{2\eta}(\cb_\infty)$ (we recall that $\eta$ is assumed to belong to $(0,\frac{1}{8})$). Note that since $\psi \in \cb_\ga$, these computations also prove that $\sup_{t\in [0,T]} E\big[\|Y_t\|^{2q}_{\cb_{\frac14+\ep}}\big] < \infty$ for $\ep >0$ small enough, which will be used in the sequel.

\smallskip

In order to show that $Y\in \cac^0(\cb_\ga)$, let us write, like in (\ref{apri-inf}),
\begin{eqnarray}
\lefteqn{E\Big[ \norm{(\delha Y)_{ts}}_{\cb_{\ga}}^{2q} \Big]}\nonumber\\
&\leq &c_q \Big\{ E\Big[\big\|\int_s^t S_{t-u}(f(Y_u) \cdot dW_u )\big\|_{\cb_{\ga}}^{2q} \Big] +E\Big[\Big(\int_s^t \norm{S_{t-u}(P\cdot f(Y_u)\cdot f'(Y_u)) }_{\cb_{\ga}} du \Big)^{2q} \Big]\Big\}\nonumber\\
&\leq & c_q \Big\{ E\Big[\big\|\int_s^t S_{t-u}(f(Y_u) \cdot dW_u )\big\|_{\cb_{\ga}}^{2q} \Big] +\Big( \int_s^t \lln t-u\rrn^{-\ga} du \Big)^{2q} \Big\}.\label{apri-ga}
\end{eqnarray}
Then one has successively
\bean
\lefteqn{E\Big[ \|S_{t-u}(f(Y_u) \cdot \ast)\|_{HS(U_0,\cb_\ga)}^{2q} \Big] \ =\ E\Big[ \Big( \sum_k \la_k \|S_{t-u}(f(Y_u) \cdot e_k)\|_{\cb_\ga}^2 \Big)^q \Big]}\\
&\leq & c_q \lln t-u\rrn^{-2q(\ga-\eta)} E\Big[ \Big(\sum_k \la_k \|f(Y_u) \cdot e_k \|_{\cb_\eta}^2\Big)^q \Big]\\
&\leq & c_q \lln t-u\rrn^{-2q(\ga-\eta)} E\Big[ \Big(\sum_k \la_k \|f(Y_u)\|_{\cb_{\frac14 +\ep}}^2 \| e_k \|_{\cb_\eta}^2\Big)^q \Big] \qquad \text{(use (\ref{prod-sobol-1}))}\\
&\leq & c_q \lln t-u\rrn^{-2q(\ga-\eta)} \big( \sum_k (\la_k \cdot k^{4\eta})\big)^q \big\{1+\sup_{t\in [0,T]} E\big[\|Y_t\|^{2q}_{\cb_{\frac14+\ep}}\big] \big\} \qquad \text{(use (\ref{nemy}))},
\eean
with $2(\ga-\eta)<1$ and $\sum_k (\la_k \cdot k^{4\eta}) <\infty$. Thanks to (\ref{bdg}) and (\ref{hilb-schmi}), we can go back to (\ref{apri-ga}) and deduce that $E\big[ \norm{(\delha Y)_{ts}}_{\cb_\ga}^{2q}\big] \leq c_q \lln t-s\rrn^{2q \ep}$ for some small $\ep >0$. Since $\psi \in \cb_\ga$, this proves in particular that $Y\in \cac^0(\cb_\ga)$.

\smallskip

Let us now turn to $K^Y$ and notice first that $\delha K^Y=L^W \der f(Y)$, so
\bean
\norm{(\delha K^Y)_{tus} }_\cb &\leq & \norm{L^W_{tu}}_{\cl(\cb,\cb)} \norm{\der(f(Y))_{us}}_\cb\\
&\leq & c_{W,f} \lln t-u\rrn^{\frac{1}{2}-\eta} \norm{(\der Y)_{us}}_\cb \qquad \text{(use (\ref{control-l-w}))}\\
&\leq & c_{W,f} \lln t-u\rrn^{\frac{1}{2}-\eta} \lcl \norm{(\delha Y)_{us}}_{\cb}+\norm{a_{us}Y_s}_\cb \rcl\\
&\leq & c_{W,f} \lcl \lln t-s \rrn^{\frac{1}{2}+\eta} \cn[Y;\cacha^{2\eta}(\cb_\infty)]+\lln t-s \rrn^{\frac{1}{2}-\eta+\ga} \cn[Y;\cac^0(\cb_\ga)] \rcl.
\eean
Besides, since
$$K^Y_{ts}=\int_s^t S_{t-u}(\der(f(Y))_{us}\cdot dW_u)+\int_s^t S_{t-u}(P\cdot f(Y_u)\cdot f'(Y_u)) \, du,$$
it is easy to check that $E\lc \norm{K^Y_{ts}}_\cb^{2q}\rc \leq c_q \lln t-s \rrn^{(1+4\eta)q}$ (use (\ref{bdg}) and (\ref{hilb-schmi}) as above). We are thus in a position to apply Lemma \ref{lem-grr} and conclude that $K^Y\in \cac_2^{\frac12+\eta}(\cb)$.

\end{proof}

%%%%%%%%%%%%%%%%%%%%%%%%%%%%%%%%%%%%%%%%%%%%%%%%%%%%%%%%%%%%%%%%%%%%%%%%%%%%

\section*{Acknowledgments}

We thank Samy Tindel for helpful discussions on the topic of the paper. We are also very grateful to two anonymous referees for their careful reading and suggestions, which have led to significant improvements in the presentation of our results.

\smallskip

Maria Jolis and Llu\'is Quer-Sardanyons 
are supported by the grant MCI-FEDER Ref. MTM2009-08869.

%%%%%%%%%%%%%%%%%%%%%%%%%%%%%%%%%%%%%%%%%%%%%%%%%%%%%%%%%%%%%%%%%%

\bibliography{mabiblio-trace,lluis}{}
\bibliographystyle{plain}

\end{document}